\documentclass[reqno,11pt]{amsart}
\usepackage{epsf}
\usepackage{mhequ}
\usepackage{amssymb}
\usepackage{graphics}
\usepackage{graphicx}
\usepackage{wrapfig}


\def\eps{\varepsilon}


\def\be{\begin{equation}}
\def\ee{\end{equation}}
\def\ba{\begin{align}}
\def\bm{\begin{multline}}
\def\bfig{\begin{figure}[htb]}
\def\efig{\end{figure}}

\setlength{\oddsidemargin}{0mm}
\setlength{\evensidemargin}{0mm}
\setlength{\textwidth}{150mm}
\setlength{\topmargin}{0mm}
\setlength{\textheight}{220mm}
\setcounter{secnumdepth}{2}
\numberwithin{equation}{section}
\newtheorem{theorem}{Theorem}[section]
\newtheorem{proposition}[theorem]{Proposition}
\newtheorem{lemma}[theorem]{Lemma}

\newtheorem{remark}[theorem]{Remark}



\DeclareMathSymbol{\leqslant}{\mathalpha}{AMSa}{"36}
\DeclareMathSymbol{\geqslant}{\mathalpha}{AMSa}{"3E}
\DeclareMathSymbol{\doteqdot}{\mathalpha}{AMSa}{"2B}
\DeclareMathSymbol{\circlearrowright}{\mathalpha}{AMSa}{"08}
\DeclareMathSymbol{\subsetneq}{\mathalpha}{AMSb}{"28}
\DeclareMathSymbol{\supsetneq}{\mathalpha}{AMSb}{"29}
\renewcommand{\leq}{\;\leqslant\;}
\renewcommand{\geq}{\;\geqslant\;}
\newcommand{\isdefby}{\; := \;}
\newcommand{\bydefis}{\; =: \;}

\newcommand{\dd}{{\rm d}}
\newcommand{\e}[1]{\,{\rm e}^{#1}\,}

\newcommand{\upchi}{\raise 2pt \hbox{$\chi$}}

\makeatletter
\makeatother
\def\writefig#1 #2 #3 {\rlap{\kern #1 truecm \raise #2 truecm
\hbox{#3}}}



\newcommand{\caA}{{\mathcal A}}
\newcommand{\caB}{{\mathcal B}}

\newcommand{\caD}{{\mathcal D}}

\newcommand{\caF}{{\mathcal F}}

\newcommand{\caK}{{\mathcal K}}

\newcommand{\caO}{{\mathcal O}}

\newcommand{\caS}{{\mathcal S}}

\newcommand{\caV}{{\mathcal V}}

\newcommand{\caX}{{\mathcal X}}

\newcommand{\bbE}{{\mathbb E}}

\newcommand{\bbP}{{\mathbb P}}
\newcommand{\bbQ}{{\mathbb Q}}
\newcommand{\bbR}{{\mathbb R}}

\newcommand{\Ai}{\mathrm{Ai}}
\newcommand{\Bi}{\mathrm{Bi}}
\newcommand{\ve}{\varepsilon}

\newcommand{\tq}{\tilde{q}}
\newcommand{\Var}{\text{Var}}
\newcommand{\deter}{\text{det}}

\newcommand{\sez}{\sigma \downarrow 0}
\newcommand{\vez}{\varepsilon \downarrow 0}

\newcommand{\si}{\sigma}

\begin{document}


\title{A chain of interacting particles under strain}

\author{Michael Allman, Volker Betz and Martin Hairer}
\address{Michael Allman, Volker Betz, Martin Hairer \hfill\newline
\indent Department of Mathematics \hfill\newline
\indent University of Warwick \hfill\newline
\indent Coventry, CV4 7AL, England \hfill\newline}
\email{m.j.allman@warwick.ac.uk, v.m.betz@warwick.ac.uk, m.hairer@warwick.ac.uk}

\maketitle

\begin{quote}
{\small
{\bf Abstract.}
We investigate the behaviour of a chain of interacting Brownian particles with one end fixed and the other moving away at slow speed $\ve > 0$, in the limit of small noise. The interaction between particles is through a pairwise potential $U$ with finite range $b>0$. We consider both overdamped and underdamped dynamics.
}  

\vspace{1mm}
\noindent
{\footnotesize {\it Keywords:} Interacting Brownian particles; Pitchfork bifurcation; Stochastic differential equations; Singular perturbation.}

\vspace{1mm}
\noindent
{\footnotesize {\it 2000 Math.\ Subj.\ Class.:} 60J70, 60H10, 34E15.}
\end{quote}


\section{Introduction}\label{sec:intro}

The behaviour of a Brownian particle moving in a potential well and acted upon by a linearly increasing force is widely used to model the mechanical failure of molecular bonds arising in dynamic force spectroscopy experiments \cite{LCST07,DFKU03,REBENMRAR06,Fri08}. This began with the work of Bell \cite{B78} and was developed further by Evans and Ritchie \cite{ER97}.

Let $q_s$ denote the length at time $s$ of a bond that is fixed at one end and has a harmonic spring attached to the other. If the spring moves linearly at speed $\ve > 0$, the motion of $q_s$ is typically modelled according to an SDE of the form
\[
\dd q_s = (-U'(q_s)+\ve s)\, \dd s + \si \, \dd W_s\;,
\]
where $U(q)$ denotes the bond energy (e.g. Lennard-Jones potential), $W_s$ is a standard Brownian motion and $\si > 0$ is the (small) noise intensity. Note that this model assumes the motion is overdamped. Rupture of the bond corresponds to the first time $q_s$ escapes from the stable well of the effective, time-dependent potential, $H(q,\ve s) = U(q) - \ve s\,q$. The effect of the external force is to lower the barrier height of $H$, thus making escape more likely. The main objective is to study the distribution of first-breaking times and how the mean first-breaking time scales with the pulling speed $\ve$. Typically, two pulling speed regimes are considered.

For very slow pulling, the particle is able to escape the well through a large deviation event before the potential has changed significantly and the energy barrier is still large. In order to apply the standard theory valid for time-independent potentials \cite{K40,Eyr35,BEGK04,FW98}, the adiabatic approximation is used: at any given time $s$, the bond has an instantaneous rate of rupture, $k(s)$, and the probability of survival until time $s$, denoted $P(s)$, decays according to $\dot P(s) = -k(s)P(s)$. Note that $1/k(s)$ is the usual Eyring-Kramers formula \cite{K40,Eyr35,BEGK04} applied to $H$ at time $s$.

As the speed of pulling increases, the energy barrier at the time of rupture becomes smaller. If pulling is sufficiently fast, the barrier may be close to vanishing completely when rupture occurs. This means that the external force, given by $\ve s$, is almost equal to the maximum slope of $U$, which occurs at the point of inflection between its minimum and maximum, i.e. maximum slope is $U'(c_0)$, where $U''(c_0) = 0$. For times $s$ at which $\ve s$ is close to this critical force, the effective potential $H$ is almost cubic near its minimum. This leads to a different rupture rate than that given above, although still calculated within the Kramers framework.

It is interesting to consider what happens as the pulling speed increases yet further and the Eyring-Kramers formula is no longer applicable, nor the adiabatic approximation underpinning the above approach. In this paper, we consider this situation in a model related to that above. More precisely, we consider a chain of two identical bonds in series with one end fixed and the other being pulled at a constant rate $\ve$. Both overdamped and underdamped dynamics are treated. We are interested in which of the two bonds breaks first and how this depends on $\ve$ and the noise intensity $\si$. As above, the dynamics near the inflection point of the bond energy $U$ play an important role and will be the focus of our analysis. Roughly, we find that for $\ve > \si^{4/3}$, the right-hand bond breaks first, while for $\ve < \si^{4/3}$, both have an equal probability of breaking in the limit of small noise. Thus $\ve = \si^{4/3}$ represents the threshold at which the adiabatic approximation becomes valid.

To our best knowledge, the first work to tackle rigorously such models of bonds under an external, time-dependent force was \cite{AB09}. There the authors consider a similar model of two bonds in series as above, but with an additional assumption that $U$ is cut-off strictly convex. The breaking event corresponds to the first time one of the two bonds exceeds the range of $U$. Roughly speaking, it is shown that for $\ve > \si$, the chain always breaks on the right-hand side, whereas for $\ve < \si$, each bond has an equal chance to break in the small noise limit. Thus the threshold between the different types of behaviour is different from that found in the present work, where the bond energy $U$ is taken to be smooth (but also with finite range). In principle, the results of \cite{AB09} can be extended to arbitrarily many bonds in series \cite{Allman}.

The behaviour of several bonds in series has also been considered by many authors, for both time-dependent and -independent external forces. The situation when the external force is constant, i.e. one initially stretches the chain by some amount and then fixes both endpoints, has been considered for harmonic potentials \cite{Lee09} and Lennard-Jones potentials \cite{SDG06}. In the harmonic case, it is shown analytically and numerically that the probability to break at either endpoint is half that of breaking at any non-extremal point, which all have the same probability. In the Lennard-Jones case, the motion is not assumed to be overdamped, i.e. the authors consider the equation
\[
m \ddot q_i(s) = -m\gamma \dot q_i(s) - \frac{\partial H}{\partial q_i}(q(s)) + \sqrt{2\gamma} \dot W_i(s)\;,\quad 2 \leq i \leq N-1\;,
\]
where the $W_i$ are independent Brownian motions, $q(s) = (q_i(s))_{i=1}^{N} \in \bbR^N$ is the chain configuration, with $q_1$ and $q_N$ constant, and $H$ is the total potential energy of the chain. It is assumed beforehand that one bond is close to breaking and all others are close to minimal energy so that a quadratic approximation can be used for $H$. Then the breakage rate is calculated, alongside simulations, using a multi-dimensional version of Kramers' theory developed by Langer \cite{Lan69}. These show that the breakage rate is lower the closer the chosen weak bond is to the chain endpoints. However, the simulations also show that the harmonic approximation for $H$ may fail for bonds near the breaking bond, which the authors there suggest may explain some discrepancies between the theory and simulations. As they point out, Langer's theory, as well as the classical Kramers theory for a single particle, requires a harmonic approximation for $H$. 

The case of a chain with one end fixed and a linearly increasing force applied at the other end has been considered by Fugmann and Sokolov in \cite{FS09a,FS09b} to model the mechanical failure of a polymer chain. More precisely, they consider the vector $q(s) = (q_i(s))_{i=0}^N \in \bbR^{N+1}$, with $q_0 \equiv 0$, which evolves according to the SDE
\[
\dd q_i(s) = -\frac{\partial H}{\partial q_i}(q(s),\ve s)\, \dd s + \si \, \dd W_i(s)\;,\quad 1 \leq i \leq N\;,
\]
where $H(q,\ve s)$ is the time-dependent potential energy of the chain, given by
\[
H(q,\ve s) = \sum_{0 \leq i < j \leq N} U(q_i - q_j) - \ve s\, q_N\;,
\]
and $U$ is the Morse potential. A break is said to occur when $q(s)$ overcomes an energy barrier of the effective potential, $H$. Numerically, they show that for a high pulling speed, $\ve$, only the right half of the chain contributes to the breaking event and the probability increases as you move towards the right. For smaller $\ve$, they show that the breakpoint is more uniformly distributed along the whole chain. In their analysis, they assume that the rupture dynamics of different bonds are independent and then apply the one-dimensional theory. The only thing left to do then is to analyse how much force each bond feels, which depends on the pulling speed. For lower pulling speeds, it is assumed that each bond feels the same force, whereas for higher pulling speeds the chain is approximated by a harmonic chain.

This paper is organised as follows. In Section \ref{sec:model}, we introduce our model and deduce equation (\ref{SDEintro}), which is our main object of study. Our results are then stated in Theorems \ref{thm:linear}, \ref{thm:overfull} and \ref{thm:mass sample paths}. In Sections \ref{sec:linear}, \ref{sec:over} and \ref{subsec:sample} we give the proofs.

The following notation is used in this paper:
\begin{itemize}
	\item By $x_1(t,\ve) \asymp x_2(t,\ve)$ we mean that for two functions $x_1(t,\ve)$, $x_2(t,\ve)$, defined for $t$ in an interval $I$ and $0 < \ve \leq \ve_0$, there exist constants $c_{\pm} > 0$ such that
\[
c_- x_2(t,\ve) \leq x_1(t,\ve) \leq c_+ x_2(t,\ve)
\] 
for all $t \in I$ and $0 < \ve \leq \ve_0$.
	\item By $o_x(1)$ we mean that $\lim_{x \to 0}o_x(1) = 0$.
	\item $x_1 \lesssim x_2$ means that there exists a constant $c > 0$ such that $c\, x_1 \leq x_2$ for $x_1, \, x_2 > 0$ sufficiently small. 
	\item $x_1 \ll x_2$ means that $x_1 = o(x_2)$.
	\item We shall write $\bbP^{t_0,q_0}$ to denote probability conditioned on the relevant process starting at time $t_0$ in position $q_0$.
\end{itemize}

{\small {\bfseries Acknowledgements:} M.A. would like to thank Barbara Gentz for many helpful discussions during a visit to Bielefeld University, supported by SFB 701, and the Courant Institute for its hospitality. M.A. is supported by EPSRC Award EP/P502810/1 and a Warwick University ASSEC grant. V.B. is supported by EPSRC Fellowship EP/D07181X/1.
M.H. gratefully acknowledges support by the EPSRC through an Advanced Research Fellowship EP/D071593/1.}

\section{The model and main results}\label{sec:model}

Three particles, $q_L$, $q$ and $q_R$ in $\bbR$, interact with each other via a pairwise potential $U$. We assume that $U$ is smooth with finite range $b > 0$ and a unique minimum at $0 < a < b$, with $U''(a) > 0$. We also assume that there is a unique $c_0 \in (a,b)$ such that $U''(c_0) = 0$. The particle $q_L$ is fixed at the origin and the position of $q_R$ at time $s \geq 0$ is given by $q_R(s)=2a(1+\ve s)$, where $\ve > 0$ is a small parameter.  We study the behaviour of the middle particle, with position at time $s$ given by $q_s$.  Initially, it has position $q_0 = a$ so that the distance between neighbouring particles is $a$. The middle particle evolves according to an SDE of the form
\begin{align*}
\dd q_s & = p_s\, \dd s\;, \\
\ve^{\beta-1}\dd p_s & = -p_s\, \dd s - \frac{\partial H}{\partial q}(p_s,q_s,\ve s)\, \dd s + \si\, \dd W_s \;,
\end{align*}
where $W_s$ is a standard one-dimensional Brownian motion with $W_0 = 0$, $\si > 0$ is the noise intensity, $\beta \in \bbR$ and $H(p,q,\ve s)$ is given by
\[
H(p,q,\ve s) = \frac{p^2}{2} + U(q) + U(2a(1+\ve s) - q)\;.
\]
Rescaling time as $t=\ve s$, this is the same in law as solving
\be\label{eq:SDE before taylor}
\begin{split}
\dd q_t & = \frac{1}{\ve}p_t\, \dd t\;, \\
\ve^{\beta-1}\dd p_t & = -\frac{1}{\ve}p_t\, \dd t - \frac{1}{\ve}\frac{\partial H}{\partial q}(p_t,q_t,t)\, \dd t + \frac{\si}{\sqrt{\ve}}\, \dd W_t \;.
\end{split}
\ee
The length of the chain $(q_L,q,q_R)$ increases linearly with $t$. Clearly, if we wait a long enough time, the distance between $q$ and at least one of its neighbours must become greater than the range of $U$. In this case, these particles no longer interact and the chain can be considered broken. Since $U$ has a minimum at $a$, it is energetically preferable for $q$ to move towards either $q_L$ or $q_R$. Letting $\ve = \ve(\si)$, our aim is to determine how the speed of pulling affects which of these two possibilities occurs in the limit as $\sez$.

We easily check that the configuration of equally spaced particles satisfies $\partial_q H = 0, \,\partial^2_q H > 0$ until time $t_0$, where $a(1+t_0) = c_0$. Thus until this time it is a stable configuration and so we expect $q_{t_0} \approx a(1+t_0)$. For $t>t_0$, this configuration becomes unstable and new minima emerge. So we expect $q_t$ to quickly move away from the chain midpoint and towards one of these newly formed minima. Note that as a function of $q$, $H$ is symmetric about $q = a(1+t)$, but its time-dependence introduces asymmetry as we shall see. Once $q_t$ has approached one of these new minima, we expect it to stay there as the energy barrier to escape becomes higher. The evolution of the chain, therefore, is determined by its behaviour around the bifurcation of $H$ at $t=t_0$, which we shall now consider.

Letting $z_t = a(1+t) - q_t$, we express the term $\partial H/\partial q$ appearing in (\ref{eq:SDE before taylor}) in terms of $z$. By a Taylor expansion in space, we find
\begin{align*}
\frac{\partial H}{\partial q}(p_t,q_t,t) & = U'(q_t)-U'(2a(1+t)-q_t)\\
& = U'(a(1+t)-z_t) - U'(a(1+t) + z_t)\\
& \approx -2U''(a(1+t))z_t - \frac{1}{3}U^{(4)}(a(1+t))z_t^3\;.
\end{align*}
Assuming that there is $0 < T < t_0$ such that for $t \in [t_0 - T,t_0 + T]$, $U^{(4)}(a(1+t))$ is negative and bounded away from zero (see comment below), we have by a Taylor expansion in time,
\[
-2U''(a(1+t))z_t - \frac{1}{3}U^{(4)}(a(1+t))z_t^3 \approx 2a(t-t_0)z_t - C z_t^3\;.
\]
We remark that this assumption about $U^{(4)}(a(1+t))$ should not have much effect. At most, the right-hand side above would have a $+C z_t^3$ term appearing, but in either case this term is very small for $z$ and $t-t_0$ close to zero, which is where most of our analysis will take place.

Making the space and time transformations $q = a(1+t)-q$, $p =  a - p/\ve$ and $t = t-t_0$, as well as normalising constants to one, we arrive at the SDE
\be\label{SDEintro}
\begin{split}
\dd q_t & = p_t\, \dd t\;, \\
\ve^{\beta}\dd p_t & = -p_t\, \dd t + \frac{1}{\ve}(t q_t - q_t^3 + \ve)\, \dd t + \frac{\si}{\sqrt{\ve}}\, \dd W_t \;.
\end{split}
\ee
This will be our main equation for the rest of this paper. By the above discussion, understanding how its solution behaves will be a good indication of the behaviour of the original chain. Equation (\ref{SDEintro}) represents the motion of the particle $q$ in the potential $(1/\ve)V(q,t) \isdefby (1/\ve)(-\frac{1}{2}tq^2 + \frac{1}{4}q^4)$ with an additional $+1$ force giving the particle a small bias towards the right. This force comes from pulling the chain $(q_L,q,q_R)$ and corresponds to the fact that in the absence of noise, $q$ does not just stay at the chain midpoint $a(1+t)$, but lags behind by a small amount.

Rephrasing the discussion after (\ref{eq:SDE before taylor}), the function $V$ represents the energy of a given chain configuration. For negative times, the origin, corresponding to equally spaced particles, minimises $V$. When $t=0$, $V$ undergoes a symmetric pitchfork bifurcation at the origin. For positive times, $V$ has two minima located at $\pm \sqrt{t}$. For $t > 0$ large enough these minima at $\pm \sqrt{t}$ correspond to the configurations where $q$ is a distance $a$ from $q_L$ or $q_R$, respectively, and more than $b$ from the other. In terms of (\ref{SDEintro}), the aim of this paper can be roughly stated as to determine whether $q$ moves towards $+\sqrt{t}$ or $-\sqrt{t}$ as $t$ becomes positive, which corresponds to the chain `breaking', and how it is affected by the speed of pulling.

There are several ways in which one may rigorously define the chain to break. One possible definition is that the chain breaks as soon as the distance between $q$ and one of its neighbours exceeds the range of the pairwise potential $U$. Then the chain either breaks on the right- or left-hand side, depending on whether $q_R - q > b$ or $q - q_L > b$, respectively. This was used in \cite{AB09}. Alternatively, one may consider the chain to break as soon as the chain configuration reaches a neighbourhood of one of the energy minima that emerges after the bifurcation. In the above formalism, this means the process $q_t$ reaching a neighbourhood of $\pm \sqrt{t}$. We shall avoid making this choice by instead giving a precise description of the behaviour of $q_t$ that contains more information than any of these possible definitions.

Equation (\ref{SDEintro}) in full is not something we can treat. But there are two obvious simplifications: the first is to omit the $q_t^3$ term in the equation for $p$, leading to a linear equation that can be solved explicitly; the second is to neglect mass, taking $\ve^{\beta}\dd p = 0$, and consider the overdamped equation. We treat both of these and obtain satisfactory results.

Taking $\si = \ve^{\alpha + 1/2}$ for $\alpha > -1/2$, we firstly consider the linear SDE
\be\label{linearintro}
\begin{split}
\dd q_t^0 & = p_t^0\, \dd t\;, \\
\ve^{\beta}\dd p_t^0 & = -p_t^0\, \dd t + \frac{1}{\ve}(t q_t^0 + \ve)\, \dd t + \ve^{\alpha}\, \dd W_t \;.
\end{split}
\ee
Denoting by $\bbP^{s}$ the law of the solution with vanishing initial condition at time $s < 0$, we have the following result:
\begin{theorem} \label{thm:linear}
Let $q_t^0$ be the solution of (\ref{linearintro}). If $\alpha > 1/4$ then
\[
\lim_{\vez}\liminf_{s \to -\infty}\bbP^{s}\left\{\lim_{t \to \infty}q^0_t = +\infty \right\}  = 1\;,
\]
while if $\alpha < 1/4$ then
\[
\lim_{\vez}\liminf_{s \to -\infty}\bbP^{s}\left\{\lim_{t \to \infty}q^0_t = +\infty \right\}  = \lim_{\vez}\liminf_{s \to -\infty}\bbP^{s}\left\{\lim_{t \to \infty}q^0_t = -\infty \right\} = 1/2\;.
\]
\end{theorem}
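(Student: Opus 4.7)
The plan is to exploit linearity. Because \eqref{linearintro} is a linear SDE with additive noise and a constant deterministic forcing $(0,\ve^{-\beta})^T$, its solution starting from $(q_s^0,p_s^0)=(0,0)$ is Gaussian, and admits the decomposition $q_t^0 = \mu_t + X_t$, where $\mu_t$ is the noise-free solution and $X_t$ is a centred Gaussian obtained by integrating the Brownian motion against the $(1,2)$-entry $K(t,u)$ of the fundamental matrix $\Phi(t,u)$ of the associated homogeneous $2\times 2$ system. Explicitly,
\[
\mu_t = \ve^{-\beta}\int_s^t K(t,u)\,\dd u, \qquad \Var(X_t) = \ve^{2(\alpha-\beta)}\int_s^t K(t,u)^2\,\dd u.
\]

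The main technical step is the asymptotic analysis of $K(t,u)$ and of these two integrals as $\ve\downarrow 0$. The homogeneous system is strongly stable for $u<0$ (a time-dependent oscillator with stiffness $|u|/\ve$), strongly unstable for $u>0$, and undergoes a turning point at $u=0$ governed by a damped Airy-type equation $\ve^\beta\ddot y+\dot y-(t/\ve)y = 0$. In the overdamped limit the kernel reduces to the explicit Gaussian $K(t,u) = e^{(t^2-u^2)/(2\ve)}$, and a direct Laplace evaluation on $\int e^{-u^2/(2\ve)}\,\dd u$ and $\int e^{-u^2/\ve}\,\dd u$ gives $\mu_t \sim \sqrt{2\pi\ve}\,e^{t^2/(2\ve)}$ and $\Var(X_t)\sim \sqrt{\pi\ve}\,\ve^{2\alpha}e^{t^2/\ve}$ for any fixed $t>0$. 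For general $\beta$ the same structure survives via WKB matched with a local parabolic-cylinder/Airy expansion through $u=0$, yielding
\[
\mu_t \asymp \ve^{1/2}\,G(t,\ve), \qquad \sqrt{\Var(X_t)} \asymp \ve^{\alpha+1/4}\,G(t,\ve),
\]
with $G(t,\ve)$ the same exponentially large factor coming from $K(t,0)$; the decisive point is that the prefactors differ only in a power of $\ve$, giving $\mu_t/\sqrt{\Var(X_t)} \asymp \ve^{1/4-\alpha}$.

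Writing $q_t^0 = \mu_t + \sqrt{\Var(X_t)}\,\xi$ with $\xi\sim\mathcal N(0,1)$, the claims at any fixed $t>0$ follow immediately from this ratio: if $\alpha>1/4$ then $\mu_t\gg\sqrt{\Var(X_t)}$, so $\bbP\{q_t^0>0\}\to 1$; if $\alpha<1/4$ then $\mu_t\ll\sqrt{\Var(X_t)}$, so $q_t^0/\sqrt{\Var(X_t)}\Rightarrow \mathcal N(0,1)$ and $\bbP\{q_t^0>0\}\to 1/2$. To lift these statements to the sign of $\lim_{t\to\infty} q_t^0$, I would fix $t_*>0$ small, condition on $(q_{t_*}^0,p_{t_*}^0)$, and re-apply variation of constants on $[t_*,t]$: the amplified initial contribution $\Phi(t,t_*)(q_{t_*}^0,p_{t_*}^0)^T$ grows like $K(t,t_*)\to\infty$, while by the same Laplace analysis on the shifted interval the additional deterministic and stochastic contributions over $[t_*,t]$ are of strictly smaller order with probability tending to $1$. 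Hence the sign is preserved and $|q_t^0|\to\infty$ almost surely. The limit $s\to-\infty$ poses no difficulty because $K(t,s)$ decays faster than any polynomial in $|s|$ and the integrals defining $\mu_t$ and $\Var(X_t)$ converge absolutely on $(-\infty,t]$.

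The principal obstacle is the uniform asymptotic analysis of $K(t,u)$ through the turning point at $u=0$, where the outer WKB ansatz degenerates. Extracting the sharp prefactors $\ve^{1/2}$ and $\ve^{\alpha+1/4}$ (and hence the threshold $\alpha=1/4$) requires a matched expansion using parabolic-cylinder or Airy functions, uniform in $\beta\in\bbR$ and in the tail contributions from large $|u|$, together with error bounds good enough to justify the sign-preservation argument on $[t_*,\infty)$.
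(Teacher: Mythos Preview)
Your high-level strategy---decompose $q_t^0$ as deterministic mean plus centred Gaussian, compute the ratio $\mu_t/\sqrt{\Var(X_t)}$, and read off the threshold---is exactly what the paper does. The difference is in execution, and the paper's route is substantially cleaner on both of the points you flag as difficult.

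First, the turning-point analysis you identify as the principal obstacle is in fact unnecessary. The homogeneous equation $\ve^\beta\ddot y + \dot y - (t/\ve)y = 0$ is \emph{globally} an Airy equation: the substitution $y(t) = e^{-t/(2\ve^\beta)}w(\tau)$ with $\tau = \ve^{-(1+\beta)/3}(t+\ve^{1-\beta}/4)$ yields exactly $w''=\tau w$. Hence the kernel $K(t,u)$ is an explicit combination of $\Ai$ and $\Bi$ with known Wronskian, and the integrals for $\mu_t$ and $\Var(X_t)$ reduce (after the same change of variables) to $\int e^{ps}\Ai(s)\,\dd s = e^{p^3/3}$ and $J(p)=\int e^{2ps}\Ai^2(s)\,\dd s$ with $p=\tfrac12\ve^{(1-2\beta)/3}$. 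A short Laplace calculation on $J(p)$ as $p\to\infty$ and $p\to 0$ then gives $m/\sqrt v \asymp \ve^{1/4-\alpha}$ for all $\beta$ simultaneously, with no WKB matching or parabolic-cylinder layer required. Your WKB proposal would of course recover this (Airy functions are the local model), but working with the exact solution bypasses the uniformity-in-$\beta$ and error-control issues entirely.

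Second, for the passage to $t\to\infty$, the paper renormalises $q_t^0$ by $\pi\ve^{(1-2\beta)/3}\Bi(t(\ve,\beta))e^{-t/(2\ve^\beta)}$ and shows the renormalised process converges almost surely to a single Gaussian random variable $\tilde q_\infty$: one stochastic integral is killed by the law of the iterated logarithm, the other is an $L^2$-bounded martingale. The sign of $\tilde q_\infty$ then determines $\lim_{t\to\infty}q_t^0\in\{\pm\infty\}$ directly. Your conditioning-at-$t_*$ argument is morally the same but, as written, only gives sign preservation with probability tending to $1$, not almost surely; to close it you would still need an almost-sure bound on the stochastic contribution over $[t_*,\infty)$, which is precisely what the renormalisation plus martingale-convergence argument delivers.
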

This theorem shows that the threshold between fast and slow pulling regimes is given by $\alpha = 1/4$ and is independent of $\beta$. However, we also note that if we start the processes at a finite negative time $-T$, then neglecting mass does have an effect, but only for $\beta < 0$. Indeed, for $-1 < \beta < 0$ and zero initial conditions, the threshold becomes $\alpha = (1+\beta)/4$ (see \cite{Allman}). This result has some clear limitations: for $t>0$, $q_t^0$ shoots off quickly to $\pm \infty$ as the drift becomes ever more repelling, while the solution of (\ref{SDEintro}) is prevented from doing this by the nonlinear term and so is more likely to return to the origin.

Secondly, we neglect the mass term $\ve^{\beta}\dd p$ in (\ref{SDEintro}) and consider the one-dimensional overdamped equation
\be\label{eq:SDEintrooverdamped}
\dd q_t = \frac{1}{\ve}(t q_t - q_t^3 + \ve) \, \dd t + \frac{\sigma}{\sqrt{\ve}} \, \dd W_t \;.
\ee
Again letting $\bbP^{s}$ denotes the law of the solution with vanishing initial condition at time $s < 0$, we have
\begin{theorem}\label{thm:overfull}
Let $q_t$ solve (\ref{eq:SDEintrooverdamped}). There exist constants $c_1,\gamma > 0$ such that if $t_1 = c_1 \sqrt{\ve |\ln \si|}$ then
\begin{enumerate}
	\item (Fast Pulling) for any $\si^{4/3}|\ln \si|^{2/3} \ll \ve(\si) \ll 1$,
	\[
	\lim_{\sez} \liminf_{s \to -\infty}  \bbP^{s}\left\{\inf_{t_1 \leq t}\frac{q_t}{\sqrt{t}} > \gamma \right\} = 1\;.
	\]
	\item (Slow Pulling) for any $\si^2 |\ln \si|^3 \lesssim \ve(\si) \ll \si^{4/3}|\ln \si|^{-13/6}$,
	\[
	  \lim_{\sez}\limsup_{s \to -\infty}\left|\bbP^{s}\left\{\inf_{t_1 \leq t}\frac{q_t}{\sqrt{t}} > \gamma\right\} - {1\over 2}\right| = 0
	\]
	and
	\[
	\lim_{\sez}\limsup_{s \to -\infty}\left|\bbP^{s}\left\{\sup_{t_1 \leq t}\frac{q_t}{\sqrt{t}} < -\gamma\right\} - {1\over 2}\right| = 0\;.
	\]
\end{enumerate}
\end{theorem}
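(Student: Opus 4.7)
\emph{Overall strategy.} The plan is to decompose the time axis into an approach phase $t\in[s,-t_*]$, a bifurcation window $t\in[-t_*,t_*]$ and a commitment phase $t\in[t_*,t_1]$ around the pitchfork bifurcation at $t=0$, for a matching time $t_*$ at least of order $\sqrt{\ve}$ and at most of order $t_1$. Near the bifurcation the natural rescaling is $\tilde t=\ve^{-1/2}t$, $\tilde q=\ve^{-1/4}q$, under which (\ref{eq:SDEintrooverdamped}) becomes
\[
\dd\tilde q=(\tilde t\tilde q-\tilde q^3)\,\dd\tilde t+\ve^{1/4}\,\dd\tilde t+\si\ve^{-1/2}\,\dd\tilde W_{\tilde t}\;.
\]
The threshold $\ve\asymp\si^{4/3}$ in the statement is precisely the regime where the rescaled deterministic bias $\ve^{1/4}$ balances the rescaled noise amplitude $\si\ve^{-1/2}$; the polylogarithmic factors in the hypotheses arise from controlling Gaussian tails of the rescaled process over a slowly diverging rescaled time window of length $\asymp\sqrt{|\ln\si|}$.

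\emph{Approach.} For $t\in[s,-t_*]$ the drift is strongly contractive towards the slow manifold $q\approx\ve/|t|$, and the term $q_t^3/\ve$ is negligible as long as $q_t$ stays in a Gaussian tube around that manifold. I would couple $q_t$ to the solution $q^0_t$ of the explicitly solvable overdamped linear SDE
\[
\dd q^0_t=\frac{1}{\ve}(tq^0_t+\ve)\,\dd t+\frac{\si}{\sqrt{\ve}}\,\dd W_t
\]
up to the first exit of $q_t$ from such a tube, and bound the discrepancy coming from the omitted $-q_t^3/\ve$ by a Gronwall estimate weighted by the contraction rate $|t|/\ve$. Using the explicit Gaussian variance $\Var(q^0_t)\asymp\si^2/|t|$, this gives that with probability $1-o(1)$, $q_{-t_*}$ is approximately Gaussian with mean of order $\ve/t_*$ and standard deviation of order $\si/\sqrt{t_*}$.

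\emph{Bifurcation and commitment.} On $[-t_*,t_*]$ I would work in the rescaled variables. In the fast regime $\ve\gg\si^{4/3}|\ln\si|^{2/3}$ the rescaled bias dominates the rescaled noise even after absorbing logarithmic losses, and a Freidlin--Wentzell type exit estimate from a cone around the positive deterministic branch shows that the rescaled path stays on the positive side throughout the window with probability $1-o(1)$. In the slow regime $\ve\ll\si^{4/3}|\ln\si|^{-13/6}$ the bias is of smaller order than the noise, so up to a vanishing error the dynamics reduce to the symmetric SDE $\dd\tilde q=(\tilde t\tilde q-\tilde q^3)\,\dd\tilde t+\si\ve^{-1/2}\,\dd\tilde W$; the $\tilde q\mapsto-\tilde q$ symmetry then gives each sign of $q_{t_*}$ probability $1/2+o(1)$, and the lower bound $\si^2|\ln\si|^3\lesssim\ve$ is exactly what is needed to discard the bias against accumulated noise. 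For $t\in[t_*,t_1]$, conditional on $|q_{t_*}|$ being of order at least $\sqrt{t_*}$ on the appropriate side, the drift $tq-q^3$ is strongly attracting towards $\pm\sqrt{t}$, and a standard exponential-martingale bound shows that the probability of crossing $0$ on $[t_*,t_1]$ is at most $\si^c$ for some $c>0$, which extends to all $t\ge t_1$ at exponentially improving rates and yields $|q_t/\sqrt{t}|\ge\gamma$ throughout.

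\emph{Main obstacle.} The delicate step is the slow-regime analysis of the bifurcation window: I must propagate the Gaussian fluctuations produced in the approach through the critical region $|t|\lesssim\sqrt{\ve}$, where the linearisation around $0$ has vanishing eigenvalue, and show that the sign of $q_{t_*}$ is indeed determined by the noise and not by the $+\ve$ bias. The tightness of the window $\si^2|\ln\si|^3\lesssim\ve\ll\si^{4/3}|\ln\si|^{-13/6}$ and the fractional logarithmic powers strongly suggest tracking $q_t$ through high-order moments (of order $|\ln\si|^\alpha$) and, once past the bifurcation, changing variable to $u=q/\sqrt{t}$ so as to exploit the contractive structure of the two stable wells at $u=\pm1$; the polylogarithmic losses should then appear as geometric sums of small failure probabilities across dyadic subintervals of $[-t_*,t_1]$.
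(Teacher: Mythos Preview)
Your three-phase decomposition and the identification of the rescaling and the $\ve\asymp\si^{4/3}$ threshold are on target, but the execution the paper uses is quite different from what you propose, and your slow-regime argument has a real gap. The paper does not work in rescaled variables nor use Freidlin--Wentzell type estimates or moment hierarchies. Instead it follows the Berglund--Gentz machinery: in the fast regime it tracks the deterministic solution $q^{\mathrm{det}}_t$ explicitly (showing $q^{\mathrm{det}}_{\sqrt\ve}\asymp\sqrt\ve$), bounds the deviation $q_t-q^{\mathrm{det}}_t$ inside a space-time tube $\caB(h)$ with $h\asymp\si\sqrt{|\ln\si|}$ via direct Gaussian tail bounds on the linearised process, and then controls the escape from the set $\caK(\kappa)=\{q^2\le(1-\kappa)t\}$ by comparison with an explicit linear process $q^\kappa_t$. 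After exit from $\caK(\kappa)$ the solution is tracked in a further tube $\caA^\tau(h)$ around the deterministic path falling into $+\sqrt t$. No large-deviation principle is invoked; the estimates are all sub-Gaussian bounds of the form $\exp(-h^2/2\si^2)$ on explicit Gaussian integrals.

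The genuine gap is in your slow-regime step. Saying that ``up to a vanishing error the dynamics reduce to the symmetric SDE'' is the right intuition, but the mechanism you propose (high-order moments, dyadic decomposition, the change of variable $u=q/\sqrt t$) does not by itself transfer the exact $\tilde q\mapsto-\tilde q$ symmetry to a statement about the sign of $q$. The paper's device is a \emph{pathwise} monotone comparison: with $\tilde q_t$ the solution of the symmetric SDE driven by the \emph{same} Brownian motion, one shows (for $x\ge 0$, and analogously for $x\le 0$)
\[
\tilde q_t \;\le\; q_t \;\le\; \tilde q_t + \int_{-T}^t e^{(t^2-s^2)/2\ve}\,\dd s + x\,e^{(t^2-T^2)/2\ve}\;,
\]
so that at the (random) exit time $\tau_{\caS(h^*)}$ of $\tilde q$ from a strip $\caS(h^*)=\{|\tilde q|<h^*/\sqrt t\}$ with $h^*\asymp\si\sqrt{|\ln\si|}$ one has $q_{\tau}=\tilde q_{\tau}(1+o(1))$ provided $\ve\ll\si^{4/3}|\ln\si|^{-13/6}$. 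The $1/2$ then comes for free from the exact symmetry of the law of $\tilde q$. Without this pathwise sandwich it is unclear how your moment estimates would pin down the sign probabilities to $1/2+o(1)$. Finally, your attribution of the lower bound $\si^2|\ln\si|^3\lesssim\ve$ is off: in the paper it does not arise from ``discarding the bias'' but from the hypothesis $\si|\ln\si|^{3/2}=\caO(\sqrt\ve)$ needed to apply the Berglund--Gentz exit-time bound for $\tilde q$ from $\caS(h^*)$.
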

Letting $\si = \ve^{\alpha+1/2}$ above gives $\alpha=1/4$ as the threshold between the different regimes, as found in Theorem \ref{thm:linear}. We also remark that the threshold between fast and slow pulling regimes here differs from that in \cite{AB09}, where it is roughly $\ve = \si$. This difference can be attributed to the bifurcation of $V$: for $t$ near zero, $V$ is almost flat and so the additional $+1$ force requires slower pulling, or stronger noise, to be counteracted than it does in \cite{AB09}, where the potential has positive curvature bounded away from zero.

Having considered two simplifications of (\ref{SDEintro}), we finally return to the full solution itself. Intuitively, by taking $\beta$ large, the effect of the mass term $\ve^{\beta}$ should become small and the solution should behave like that of the overdamped equation (\ref{eq:SDEintrooverdamped}). So if we show that for suitably large $\beta$, the difference between the two solution stays small, we can use Theorem \ref{thm:overfull} to tell us about (\ref{SDEintro}). This leads us to the following result:
\begin{theorem}\label{thm:mass sample paths}
Let $q_t$ solve (\ref{SDEintro}) with $\beta > 2$. There exist constants $c_1, \gamma > 0$, independent of $\beta$ and $\si$, such that for $t_1 = c_1 \sqrt{\ve |\ln \si|}$ and any $t_2 > t_1$,
\begin{enumerate}
	\item (Fast Pulling) if $\si^{4/3}|\ln \si|^{2/3} \ll \ve(\si) \ll 1$ then
	\[
	\lim_{\sez} \liminf_{s \to -\infty} \bbP^{s}\left\{\inf_{t_1 \leq t \leq t_2}\frac{q_t}{\sqrt{t}} > \gamma \right\} = 1\;,
	\]
	\item (Slow Pulling)
	while if $0 < \delta < \beta/2 - 1$ and $\si^{2/(1+2\delta)} \ll \ve(\si) \ll \si^{4/3}|\ln \si|^{-13/6}$ then
	\[
	  \lim_{\sez}\limsup_{s \to -\infty}\left|\bbP^{s}\left\{\inf_{t_1 \leq t \leq t_2}\frac{q_t}{\sqrt{t}} > \gamma\right\} - 1/2 \right| = 0
	\]
	and
	\[
	\lim_{\sez}\limsup_{s \to -\infty}\left|\bbP^{s}\left\{\sup_{t_1 \leq t \leq t_2}\frac{q_t}{\sqrt{t}} < -\gamma\right\} - 1/2 \right| = 0\;.
	\]
\end{enumerate}
\end{theorem}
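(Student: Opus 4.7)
The strategy, foreshadowed in the paragraph preceding the theorem, is to couple the underdamped solution $q_t$ of (\ref{SDEintro}) with the overdamped solution $\bar q_t$ of (\ref{eq:SDEintrooverdamped}) driven by the same Brownian motion $W_t$ and with matched initial data, and then read off the behaviour of $q_t$ from Theorem \ref{thm:overfull} applied to $\bar q_t$. Write $f(q,t) \isdefby \frac{1}{\ve}(tq - q^3 + \ve)$. Solving the second line of (\ref{SDEintro}) for $p_t\,\dd t$ and substituting into $\dd q_t = p_t\,\dd t$, then subtracting (\ref{eq:SDEintrooverdamped}), the noise terms cancel exactly and, setting $R_t \isdefby q_t - \bar q_t$, we obtain
\be\label{eq:Rcoupling}
R_t = R_{t_0} + \int_{t_0}^t \bigl(f(q_s,s) - f(\bar q_s,s)\bigr)\, \dd s - \ve^\beta(p_t - p_{t_0}).
\ee
Since $f(q,s) - f(\bar q,s) = \frac{R_s}{\ve}(s - q_s^2 - q_s\bar q_s - \bar q_s^2)$, equation (\ref{eq:Rcoupling}) is a random linear integral equation for $R$, forced purely by the stray momentum term $\ve^\beta p_t$.

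The first step is to bound $\sup_t \ve^\beta|p_t|$ with high probability. Writing $p_t$ as the solution of the linear SDE with relaxation time $\ve^\beta$ gives
\[
\ve^\beta p_t = \ve^\beta p_{t_0}\e{-(t-t_0)/\ve^\beta} + \int_{t_0}^t \e{-(t-s)/\ve^\beta}\Bigl(f(q_s,s)\,\dd s + \frac{\si}{\sqrt{\ve}}\,\dd W_s\Bigr),
\]
and a Bernstein-type estimate combined with an a priori bound on $|q_t|$ (bootstrapped from the overdamped bounds of Theorem \ref{thm:overfull}) shows that $p_t$ is slaved to the slow manifold in the sense that $\ve^\beta|p_t| = O(\ve^\beta|f(q_t,t)|) + O(\ve^{\beta - 1/2}\si\sqrt{|\ln\si|})$ uniformly on $[s,t_2]$ with probability tending to $1$.

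Next, apply a Gr\"onwall/bootstrap argument to (\ref{eq:Rcoupling}) controlled by stopping times at which $|R_t|$ first exceeds a prescribed threshold $o(\sqrt{t_1})$ or $|q_t|$ first leaves its a priori size window. Split into three phases: (i) the pre-bifurcation phase $t \in [s, -t_1]$, where $|q_s|,|\bar q_s|$ are small and $s - q_s^2 - q_s\bar q_s - \bar q_s^2 \leq s$ produces an exponentially contracting Gr\"onwall factor of order $\e{-c s^2/\ve}$; (ii) the bifurcation window $t \in [-t_1, t_1]$ with $t_1 = c_1\sqrt{\ve|\ln\si|}$, where the integrated coefficient is only $O(|\ln\si|)$, so $R_t$ grows by at most a slow polynomial factor in $\si^{-1}$; (iii) the post-bifurcation phase $t \in [t_1, t_2]$, where the overdamped analysis gives $|\bar q_s| \asymp \sqrt{s}$ and consequently $s - q_s^2 - q_s\bar q_s - \bar q_s^2 \asymp -s$, yielding exponential contraction again. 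Summing the three contributions together with the mass-error estimate from the previous step shows $\sup_{s\leq t\leq t_2}|R_t| = o(\sqrt{t_1})$ with probability tending to one; the hypotheses $\beta > 2$ and $\si^{2/(1+2\delta)} \ll \ve$ with $\delta < \beta/2 - 1$ are precisely what is needed so that the mass-error term remains negligible compared with $\sqrt{t_1}$ throughout phases (ii) and (iii).

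Once $|R_t| = o(\sqrt{t_1})$ on $[t_1,t_2]$, the events $\{\bar q_t/\sqrt{t}>\gamma\}$ and $\{\bar q_t/\sqrt{t}<-\gamma\}$ transfer to $q_t$ at the cost of a harmless shrinkage of $\gamma$, and Theorem \ref{thm:overfull} delivers both conclusions. The main obstacle is the bifurcation window (phase (ii)): the restoring drift $f$ is nearly flat and the momentum has not yet fully relaxed onto the slow manifold, so neither the dissipation used in the overdamped proof nor the contraction mechanism of phases (i) and (iii) is available. Quantifying the simultaneous growth of $R$ and $|p|$ across the transition, via a careful stopping-time argument combined with the exponential smoothing on timescale $\ve^\beta$, is the technical heart of the proof and is what ultimately forces the hypothesis $\beta > 2$.
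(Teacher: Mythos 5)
Your high-level strategy — couple the underdamped $q_t$ with the overdamped $\bar q_t$ driven by the same Brownian motion, then transfer Theorem~\ref{thm:overfull} from $\bar q_t$ to $q_t$ — is the same basic idea as the paper's, but the mechanism you propose for controlling the coupling is genuinely different, and I think it contains a gap that the paper's mechanism was designed specifically to avoid.

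You propose a direct Gr\"onwall/bootstrap estimate on $R_t = q_t - \bar q_t$, aiming to show $\sup_{s\leq t\leq t_2}|R_t| = o(\sqrt{t_1})$. The problem is the bifurcation window, which you correctly identify as ``the technical heart'' but then do not actually treat. Near the pitchfork the coupled system has genuine sensitive dependence on initial conditions: two nearby paths of the overdamped SDE started at $\pm\delta$ at time $0$ end up near $\pm\sqrt{t}$, a separation of order $\sqrt{t}$, independently of how small $\delta$ is. The Gr\"onwall expansion factor $\e{\Lambda(t,s)}$ with $\Lambda(t,s) = \int_s^t(u - q_u^2 - q_u\bar q_u - \bar q_u^2)/\ve\,\dd u$ is not being pessimistic here — it reflects the actual stretching of the flow. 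A crude bound gives $\e{\Lambda}\leq\e{t_1^2/2\ve} = \si^{-c_1^2/2}$, and the constant $c_1$ in $t_1 = c_1\sqrt{\ve|\ln\si|}$ is forced to be \emph{large} (one needs $c_1^2 > 2/\kappa > 3$ with $\kappa\in(1/2,2/3)$ for the exit estimate from $\caK(\kappa)$), so $c_1^2/2 > 3/2 > 1$ and $\si^{-c_1^2/2}$ is a serious loss. A sharper, path-dependent argument (the linearisation coefficient decays once $\bar q_t$ has climbed out of the diffusion-dominated strip) can reduce this to a factor of order $\sqrt{\ve}/\si$, but this requires tracking the random exit position from $\caS(h^*)$ and the subsequent flow carefully, and has to be done simultaneously with the momentum bootstrap. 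None of this is spelled out; as written, the phase-(ii) step does not go through.

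The paper avoids this problem entirely by never attempting a smallness estimate on $q_t - \bar q_t$. Instead, it decomposes $q_t = y_t + Q_t$ where $Q_t$ solves the purely linear system (\ref{eq:QP}); on the high-probability event $(E_1\cup E_2)^c$ the process $y_t$ is differentiable and a singular-perturbation (Laplace) argument (Proposition~\ref{y}) shows that $\dot y_t$ differs from the overdamped drift by $C\max\{\ve^{\beta-2-\delta},\,\si\ve^{-3/2+\beta/2-2\delta}\}$. This converts the problem into a \emph{one-sided} comparison (Proposition~\ref{th}): $q^-_t\leq q_t + \ve^\beta P_t\leq q^+_t$, where $q^\pm$ solve overdamped SDEs with slightly perturbed drift constant and with initial positions $x\pm 3$ — i.e.\ macroscopically separated. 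One then applies the Section~\ref{sec:over} analysis to each of $q^\pm$ directly. The comparison absorbs the growth across the bifurcation because it only requires the ordering to be preserved, not the gap to stay small; the price paid is that $q^\pm$ must be shown to ``go the same way'' in the limit, which follows from the overdamped analysis plus the SDE comparison principle, not from any smallness of $q^+_t - q^-_t$. This is exactly where your Gr\"onwall proposal has the most to prove.

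Two further gaps: (i) you write ``with matched initial data'' and ``an a priori bound on $|q_t|$ bootstrapped from the overdamped bounds of Theorem~\ref{thm:overfull}'', but Theorem~\ref{thm:overfull} controls $\bar q_t$, not $q_t$, so this bootstrap needs to be set up carefully via stopping times to avoid circularity — this is doable but not trivial and you do not indicate how. (ii) The statement is about $\liminf_{s\to-\infty}$; you need a separate argument to show the solution returns to a compact set in phase space $(q,p)$ at a fixed finite time, uniformly in the starting time $s$. The paper does this in Propositions~\ref{prop:remainboundedfull} and \ref{prop:remainboundedfull2} via a Lyapunov function $\Psi(p,q,t)$ adapted to the underdamped flow; your outline omits this step entirely, and it is not covered by your Gr\"onwall because the Gr\"onwall presupposes both processes start at the same (bounded) initial point.
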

Note that we only consider finite time intervals here and that there is a slight difference between the lower bound on $\ve$ in (2) above and in Theorem~\ref{thm:overfull}(2). This second point is related to the fact that the mass is of the form $\ve^{\beta}$. However, it does not affect the threshold between the two regimes.

We finally note that (\ref{SDEintro}) is not suitable for considering the chain `breaking' due to a large deviation event. In that case, our expansion of the potential is not valid.

\section{The linear model}\label{sec:linear}
In this section, we give the proof of Theorem \ref{thm:linear} and are therefore dealing with equation 
\eqref{linearintro}. 
This equation is simple enough to have an explicit solution in terms of Airy functions $\Ai(t)$, $\Bi(t)$ (see \cite{dlmf}): using the fact that both $\Ai(z)$ and $\Bi(z)$ solve the equation $w''(z) = z w(z)$, 
and that the Wronskian $\Ai'(t) \Bi(t) - \Bi'(t) \Ai(t) = 1/\pi$, it can be checked that the process
\be\label{eq:qlinear}
\begin{split}
q^0(t) = & \pi \ve^{(1 - 2 \beta)/3}  \left( 
-\Ai (t(\ve,\beta)) 
\int_{-\infty}^t
\e{-\frac{1}{2} (t-s) \ve^{-\beta}} \mathrm{Bi}(s(\ve,\beta))
( \dd s + \eps^\alpha \dd W_s ) \right.\\
& + \left. \mathrm{Bi}(t(\ve,\beta)) 
\int_{-\infty}^t
\e{-\frac{1}{2} (t-s) \ve^{-\beta}} \Ai(s(\ve,\beta))
 ( \dd s + \eps^\alpha \dd W_s ) \right) 
\end{split}
\ee
is the (almost surely unique) solution of \eqref{linearintro} with zero initial conditions and starting time sent to $-\infty$. Above, we have set 
$s(\eps, \beta) = \eps^{-(1+\beta)/3} ( s + \eps^{1-\beta}/4)$. 
The semi-infinite 
stochastic integrals are to be understood as limits of finite stochastic integrals 
from $-T$ to $t$ as $T \to \infty$. The existence of a limit process $q^0(t)$ of this procedure, 
for all $t$ in intervals $[-S,\infty)$, $S>0$, is easy to see in the present case: Clearly, when $-\infty$ is 
replaced by $-T$, \eqref{eq:qlinear} defines a Gaussian Markov process on $[-T,\infty)$, with mean and 
variance readily calculated (albeit with long formulas, which is why we do not display them here). 
By using the asymptotic expansions 
\begin{align}
\Ai(s) & = \frac{1}{2\sqrt{\pi}} s^{-1/4} \e{-2 s^{3/2}/3} (1 + \caO(s^{-2/3}))\;, \qquad s>1 \label{eq:Ai s>0} \\
\Bi(s) & = \frac{1}{\sqrt{\pi}} s^{-1/4} \e{2 s^{3/2}/3} (1 + \caO(s^{-2/3}))\;, \qquad s>1 \label{eq:Bi s>0} \\
\Ai(s) & = \frac{1}{\sqrt{\pi}} |s|^{-1/4} \cos(2 |s|^{3/2}/3 - \pi/4) (1 + \caO(|s|^{-2/3}))\;, \qquad s<-1 
\label{eq:Ai s<0} \\
\Bi(s) & = \frac{-1}{\sqrt{\pi}} |s|^{-1/4} \sin(2 |s|^{3/2}/3 - \pi/4) (1 + \caO(|s|^{-2/3}))\;, \qquad s<-1 
\label{eq:Bi s<0}
\end{align}
it is clear that the distribution of $q_t$ at any time $-S$ converges to a non-degenerate Gaussian as $T \to \infty$,
and thus \eqref{eq:qlinear} is well-defined. 

The behaviour of $q^0$ as $t \to \infty$ can be described in a straightforward way by considering the `renormalized process'
\[
\tilde q(t) = \frac{1}{\pi \eps^{(1-2\beta)/3}} \frac{\e{\frac{1}{2} t \eps^{-\beta}}}{\Bi(t(\eps,\beta))} q^0(t)\;.
\]
Indeed, we then have

\begin{proposition} \label{linear convergence}
$\lim_{t \to \infty} \tilde q_t$ exists almost surely, and is a Gaussian random variable 
with mean 
\be \label{linear mean}
m = \eps^{(1+\beta)/3} \e{-\frac{1}{12} \eps^{1-2\beta}}
\ee
and variance 
\be \label{linear var}
v =  \eps^{2 \alpha + (1+\beta)/3} \e{-\frac{1}{4} \eps^{1-2\beta}} \int_{-\infty}^{\infty}
\e{s \eps^{(1-2\beta)/3}} \Ai(s)^2 \, \dd s\;.
\ee
\end{proposition}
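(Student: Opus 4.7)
The plan is to substitute the explicit formula \eqref{eq:qlinear} into the definition of $\tilde q$, distribute the prefactor $e^{t\ve^{-\beta}/2}/\Bi(t(\ve,\beta))$ through both semi-infinite integrals, and arrive at the decomposition
\[
\tilde q(t) \;=\; I_A(t) - R(t)\,I_B(t)\;,
\]
where $R(t) := \Ai(t(\ve,\beta))/\Bi(t(\ve,\beta))$ and, for $\Phi \in \{\Ai,\Bi\}$,
\[
I_\Phi(t) \;:=\; \int_{-\infty}^t e^{s\ve^{-\beta}/2}\,\Phi(s(\ve,\beta))\,(\dd s + \ve^\alpha\,\dd W_s)\;.
\]
The proof then splits into three steps: (i) show $I_A(\infty)$ exists a.s.\ and is Gaussian; (ii) show $R(t)I_B(t) \to 0$ a.s.\ as $t\to\infty$; (iii) compute the mean and variance of $I_A(\infty)$ explicitly.

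For (i), the asymptotics \eqref{eq:Ai s>0}--\eqref{eq:Bi s<0} yield that $s \mapsto e^{s\ve^{-\beta}/2}\Ai(s(\ve,\beta))$ and its square are absolutely integrable on $\bbR$: at $+\infty$ the super-exponential decay of $\Ai$ kills the exponential factor, while at $-\infty$ the exponential factor kills the merely oscillatory Airy function of polynomial envelope. Thus the stochastic part of $I_A$ is a continuous martingale with bounded quadratic variation, hence converges a.s.\ by the martingale convergence theorem, while the deterministic part converges absolutely.

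For (ii), the same asymptotics give $R(t) \sim \tfrac12 e^{-\frac{4}{3}t(\ve,\beta)^{3/2}}$ up to polynomial factors; meanwhile a Laplace/integration-by-parts estimate on the deterministic part of $I_B$ and the Ito isometry on its martingale part both show growth at most $e^{\frac{2}{3}t(\ve,\beta)^{3/2}}$ times a polynomial, for the mean and for the square root of the quadratic variation respectively. A Dambis--Dubins--Schwarz time change on the martingale part of $I_B$ then makes the law of the iterated logarithm applicable, and the resulting envelope is comfortably crushed by $R(t)$, giving $R(t)I_B(t)\to 0$ a.s.

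With $\tilde q(\infty)=I_A(\infty)$ established, (iii) is direct computation. The mean is $\int_{\bbR} e^{s\ve^{-\beta}/2}\Ai(s(\ve,\beta))\,\dd s$, and substituting $u = s(\ve,\beta) = \ve^{-(1+\beta)/3}(s + \ve^{1-\beta}/4)$ transforms it into
\[
m \;=\; \ve^{(1+\beta)/3}\,e^{-\ve^{1-2\beta}/8} \int_{\bbR} e^{au}\Ai(u)\,\dd u\;,\qquad a := \tfrac12\ve^{(1-2\beta)/3} > 0\;,
\]
so that the classical Airy identity $\int_{\bbR} e^{au}\Ai(u)\,\dd u = e^{a^3/3}$ (which one proves quickly from $\Ai''(u)=u\Ai(u)$, integration by parts, and $\int_{\bbR}\Ai = 1$) reproduces \eqref{linear mean}. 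The variance $\ve^{2\alpha}\int_{\bbR} e^{s\ve^{-\beta}}\Ai(s(\ve,\beta))^2\,\dd s$ (by Ito's isometry) yields \eqref{linear var} after the same substitution. The main technical obstacle is step (ii): although both $R(t)$ and $I_B(t)$ are dangerously divergent in opposite directions, the bookkeeping of the various $\ve$-dependent exponents has to be done carefully to confirm that $R(t)$ wins; once done, everything else is essentially explicit calculation.
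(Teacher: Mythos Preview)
Your proposal is correct and follows essentially the same route as the paper: the same decomposition $\tilde q = I_A - R\,I_B$, martingale convergence for the $\Ai$-part, a DDS time change plus the law of the iterated logarithm for the $\Bi$-part, and the same substitution together with the identity $\int_{\bbR} e^{au}\Ai(u)\,\dd u = e^{a^3/3}$ for the mean and variance. One small simplification worth noting: instead of a Laplace estimate on $I_B$, the paper uses the crude bound $\Bi(s)\leq \Bi(t)$ for $s\leq t$, $t\geq 0$, which makes the ``$\eps$-bookkeeping'' you worry about unnecessary (the limit $t\to\infty$ is at fixed $\eps$, so only the $t$-asymptotics matter).
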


\begin{proof}
Let us first investigate the deterministic integrals in \eqref{eq:qlinear}. Since $\Bi(s) < \Bi(t)$ for 
$s < t$ and $t \geq 0$, we have that the deterministic integral in the first line of \eqref{eq:qlinear}, 
after renormalisation, is bounded 
by $\Ai(t(\eps,\beta)) \int_{-\infty}^t \e{s\ve^{-\beta}/2} \, \dd s$ for large enough $t$, and thus converges to zero as 
$t \to \infty$ due to \eqref{eq:Ai s>0}. It is known \cite{dlmf} that 
$\int_{-\infty}^\infty \e{ps} \! \Ai(s) \, \dd s = \e{p^3/3}$ for all $p > 0$, and thus 
the limit of the corresponding (renormalized) integral in the 
second line of \eqref{eq:qlinear} is given by 
\[
\int_{-\infty}^\infty \! \e{\frac{s}{2} \eps^{-\beta}} \Ai(s(\eps,\beta)) \, \dd s = 
\eps^{(1+\beta)/3} \e{-\frac{1}{8} \eps^{1-2\beta}} \int_{-\infty}^\infty \! \e{\frac{1}{2} s \eps^{(1-2\beta)/3}}
\Ai(s) \, \dd s = \eps^{(1+\beta)/3} \e{-\frac{1}{12} \eps^{1-2\beta}}\;.
\]
Clearly, this is also the limit of $E(\tilde q(t))$ as $t \to \infty$. 

The stochastic integrals in $\tilde q(t)$ are given by 
\[
J_1(t) = \ve^{\alpha}h(t) \int_{-\infty}^t f_1(s) \, \dd W_s\;, \qquad J_2(t) = \ve^{\alpha}\int_{-\infty}^t f_2(s) \, \dd W_s\;,
\]
with 
\[
h(t) = \frac{\Ai(t(\eps,\beta))}{\Bi(t(\eps,\beta))}\;, \quad f_1(s) = \Bi(s(\eps,\beta)) \e{s \eps^{-\beta}/2}\;, 
\quad f_2(s) = \Ai(s(\eps,\beta)) \e{s \eps^{-\beta}/2}\;.
\]
By the time change $\tau_1(t) = \eps^{2 \alpha} \int_{-\infty}^t f_1^2(s) \, \dd s$, 
$J_1(t)$ equals $h(t) B_{\tau_1(t)}$ in distribution, where $B_s$ is a standard Brownian motion. By the law 
of the iterated logarithm, 
\be\label{lil}
\limsup_{t \to \infty} \frac{|J_1(t)|}{h(t) \sqrt{2 \tau_1(t) \ln \ln \tau_1(t)}} = 1
\ee
almost surely. Again using $\Bi(s) < \Bi(t)$ for $s < t$ and $t \geq 0$, we find 
\[
h(t) \sqrt{\tau_1(t)} \leq \Ai(t(\eps,\beta))\ve^{\alpha} \left(\int_{-\infty}^t \ve^{s \eps^{\beta}} \, \dd s\right)^{1/2}\;,
\]
which by \eqref{eq:Ai s>0} converges to zero superexponentially fast. By \eqref{eq:Bi s>0} it is easy to see 
that $\ln \ln \tau_1(t)$ grows only proportionally to $\ln t$, and thus the denominator on the left-hand 
side of \eqref{lil} converges to zero. It follows that $J_1(t) \to 0$ as $t \to \infty$ almost surely. 
$J_2$, on the other hand, is a square-integrable martingale, and thus converges almost surely. Each $J_2(t)$ is 
Gaussian, and thus so is the limit. It has mean zero and variance 
\[
v = \ve^{2\alpha}\int_{-\infty}^{\infty}\! f_2^2(s) \, \dd s =  \eps^{2 \alpha} \int_{-\infty}^{\infty} \! 
\e{s \eps^{-\beta}} \Ai(s(\eps,\beta))^2 \, \dd s\;.
\]
The same change of variable that was employed to get \eqref{linear mean} yields \eqref{linear var}.
\end{proof}

Since $\Bi(t) \e{-t}$ diverges as $t \to \infty$, Proposition \ref{linear convergence} 
means $\lim_{t \to \infty} |q^0(t)| = \infty$ almost surely. Whether the divergence is to plus or
minus infinity is determined by the sign of $\tilde q_{\infty} = \lim_{t \to \infty} \tilde q_t$. 
$\tilde q_{\infty}$ is a Gaussian random variable with mean $m = m(\eps) >0$ given by \eqref{linear mean}, 
and variance $v = v(\eps)$ given by \eqref{linear var}. Thus if 
$\lim_{\eps \to 0} m(\eps) / \sqrt{v(\eps)} = \infty$, then 
$\lim_{\eps \to 0} \bbP(\lim_{t \to \infty} q^0(t) = +\infty ) = 1$, as the distribution of $\tilde q_\infty$ 
concentrates on the positive half line. On the other hand, if $\lim_{\eps \to 0} m(\eps) / \sqrt{v(\eps)} = 0$, then 
$\lim_{\eps \to 0} \bbP(\lim_{t \to \infty} q^0(t) = +\infty ) = 1/2$, as the distribution of $\tilde q_\infty$
becomes spread out and $\bbP(\tilde q_\infty > 0) \to 1/2$ as $\eps \to 0$.

We now determine the circumstances under which each of the above cases occurs. Define 
\[
J(p) = \int_{-\infty}^{\infty} \e{2ps} \Ai^2(s) \, \dd s\;.
\]
We have
\begin{lemma}
There exist constants $c_1$ and $c_2$ such that 
\begin{itemize}
\item[(i)] $\lim_{p \to \infty} p^{1/2} \e{-2 p^3/3} J(p) = c_1$,
\item[(ii)] $\lim_{p \to 0} p^{1/2} \e{-2 p^3/3} J(p) = c_2$.
\end{itemize}
\end{lemma}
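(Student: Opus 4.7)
The plan is to exploit the Airy equation $\Ai''(s) = s\, \Ai(s)$ to derive a first-order linear ODE for $J(p)$ that integrates in closed form. In fact the short calculation below will yield $J(p) = C\, p^{-1/2}\, e^{2p^3/3}$ exactly, for all $p > 0$ and some positive constant $C$; both assertions (i) and (ii) then follow immediately with $c_1 = c_2 = C$.

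The starting identity $\frac{\dd}{\dd s}\bigl[(\Ai'(s))^2 - s\, \Ai(s)^2\bigr] = -\Ai(s)^2$ follows from $\Ai'' = s\, \Ai$ by direct differentiation. Substituting it into $J(p)$ and integrating by parts gives
\[
J(p) = 2p\, K(p) - p\, J'(p)\,, \qquad K(p) := \int_{-\infty}^{\infty} e^{2ps}\,(\Ai'(s))^2\, \dd s\,,
\]
where the boundary terms vanish because $\Ai$ and $\Ai'$ decay superexponentially as $s \to +\infty$ by \eqref{eq:Ai s>0}, while $e^{2ps}$ (with $p > 0$) damps the $\mcO(|s|^{1/2})$ growth at $-\infty$ implied by \eqref{eq:Ai s<0}; the identification $2p \int s\, e^{2ps}\,\Ai^2\,\dd s = p\, J'(p)$ is differentiation under the integral, justified by dominated convergence on compact subsets of $(0,\infty)$. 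Applying the same technique to $K$, using instead $\frac{\dd}{\dd s}(\Ai\, \Ai') = (\Ai')^2 + s\, \Ai^2$ and two further integrations by parts, reduces $K$ to $K(p) = 2p^2 J(p) - \tfrac12 J'(p)$. Combining the two identities produces the ODE $2p\, J'(p) = (4p^3 - 1)\, J(p)$, equivalently $\frac{J'(p)}{J(p)} = 2p^2 - \frac{1}{2p}$, whose solution is $J(p) = C\, p^{-1/2}\, e^{2p^3/3}$, with $C > 0$ since $J > 0$.

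The main technical obstacle is the bookkeeping involved in repeatedly verifying that boundary terms vanish at $\pm\infty$; however, this follows routinely from the Airy asymptotics \eqref{eq:Ai s>0}--\eqref{eq:Bi s<0} (together with the analogous estimates for $\Ai'$, obtained by term-by-term differentiation) combined with the exponential factor $e^{2ps}$ for $p > 0$. I do not anticipate any deeper difficulty: the closed form $J(p) = C\, p^{-1/2}\, e^{2p^3/3}$ in fact shows that $p^{1/2} e^{-2p^3/3} J(p)$ is constant on $(0,\infty)$, which is stronger than needed for the stated lemma.
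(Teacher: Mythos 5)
Your proof is correct, and it takes a genuinely different route from the paper's. The paper evaluates the two limits directly by asymptotic analysis: for $p\to\infty$ it rescales $s = p^2 t$ and applies the Laplace method around the saddle $t=1$, and for $p\to 0$ it substitutes $t = -ps$ and averages the rapidly oscillating $\cos^2$ factor coming from \eqref{eq:Ai s<0}. Your approach instead exploits the Airy ODE to derive the exact first-order linear ODE $2pJ'(p) = (4p^3 - 1)J(p)$, which integrates to the closed form $J(p) = C\,p^{-1/2}e^{2p^3/3}$. I checked the two energy-type identities, the resulting reduction to $J = 2pK - pJ'$ and $K = 2p^2 J - \tfrac12 J'$, and the boundary-term and differentiation-under-the-integral justifications, and they all hold (the $\caO(|s|^{1/2})$ growth of $(\Ai')^2 - s\Ai^2$ and $\caO(1)$ of $\Ai\Ai'$ at $s\to-\infty$ are indeed killed by $e^{2ps}$). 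Your result is in fact \emph{stronger} than the lemma as stated: it shows $p^{1/2}e^{-2p^3/3}J(p)$ is constant, so $c_1 = c_2$. Incidentally, this exposes a small arithmetic slip in the paper's proof: their Laplace-method Taylor expansion writes $4t^{3/2}/3 - 2t = -2/3 + (t-1)^2 + \caO((t-1)^3)$, dropping the factor $1/2$ in front of $(t-1)^2$, which inflates $c_1$ by $\sqrt{2}$; the correct value is $c_1 = c_2 = \tfrac{1}{2\sqrt{2\pi}}$, consistent with your closed form. This does not affect anything downstream (only the existence of positive constants is used), but it illustrates that your ODE approach is both cleaner and less error-prone than the paper's direct asymptotics.
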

\begin{proof}
Consider first the case $p \to \infty$. Then, 
\[
\int_{-\infty}^1 \e{2ps} \Ai^2(s) \, \dd s \, \, p^{1/2} \e{-2 p^3/3} \leq C \e{-2 p^3 / 3 + 2 p} p^{1/2} \to 0
\]
as $p \to \infty$. For $s>1$ we use \eqref{eq:Ai s>0} and find
\[
\begin{split}
\int_1^\infty \e{2ps} \Ai^2(s) \, \dd s & = \frac{1}{4 \pi} \int_1^\infty \e{-4 s^{3/2}/3 + 2ps} s^{-1/2}
(1 + \caO(s^{-3/2})) \, \dd s \\ 
& = \frac{p}{4 \pi} \int_{1/p^2}^\infty \e{-p^3 (4 t^{3/2}/3 - 2 t)} t^{-1/2} (1 + \caO(p^{-3} t^{-3/2})) \, \dd t\;,
\end{split}
\]
where we used the substitution $ s = p^2 t$. Decompose the integral as $\int_{1/p^2}^{\infty} = \int_{1/p^2}^{1/4} + \int_{1/4}^{\infty}$. The first of these is bounded by $C\e{p^3/3}$ for some $C>0$ and we can ignore it. For the second, we have $\caO(p^{-3} t^{-3/2}) = \caO(p^{-3})$ and can take this outside the integral. Then by the Laplace method, 
\[
\begin{split}
\int_{1/4}^\infty \e{-p^3 (4 t^{3/2}/3 - 2 t)} t^{-1/2} \, \dd t & = \e{2 p^3/3} \int_{1/4}^\infty
\e{-p^3 [ (t-1)^2 + \caO(t-1)^3]} t^{-1/2} \, \dd t\\  
&= \e{2 p^3/3} p^{-3/2} \sqrt{\pi} (1 + \caO(1/p))\;.
\end{split}
\]
Thus (i) holds with $c_1 = \frac{1}{4 \sqrt{\pi}}$. For (ii), we use that 
\[
\int_{-1}^\infty \e{2ps} \Ai^2(s) \, \dd s \to \int_{-1}^\infty \Ai^2(s) \, \dd s = \rm{const}
\]
as $p \to 0$. With \eqref{eq:Ai s<0} we then get 
\[
\begin{split}
\int_{-\infty}^{-1} \e{2ps} \Ai^2(s) \, \dd s & = \frac{1}{\pi} \int_{-\infty}^{-1}
\e{2ps}  |s|^{-1/2} \cos^2(2 |s|^{3/2}/3 - \pi/4)  (1 + \caO(|s|^{-3/2})) \, \dd s \\
& = \frac{1}{\pi \sqrt{p}} \int_p^\infty \e{-2t} t^{-1/2} 
\cos^2(2 p^{-3/2} t^{3/2}/3 - \pi/4) \, \dd t  + \caO(1)\;,
\end{split} 
\]
where in the last line we used the substitution $t = -ps$. As $p \to 0$, the integral in the 
last line above converges to 
\[
\frac{1}{2} \int_0^\infty t^{-1/2} \e{-2t} \, \dd t = \frac{\sqrt{\pi}}{2 \sqrt{2}}\;,
\]
which proves (ii) with $c_2 = \frac{1}{\sqrt{\pi} 2^{3/2}}$.
\end{proof}
When substituting $p = \eps^{(1-2 \beta)/3}/2$ into the last lemma, we find that
\[
v(\eps) =  C \eps^{2 \alpha + (1+\beta)/3} \e{-\frac{1}{4} \eps^{1-2\beta}} 
\eps^{(1-2\beta)/6} \e{\frac{1}{12} \eps^{1-2\beta}} = C
\eps^{2 \alpha +  \frac{2\beta}{3} + \frac{1}{6}} \e{-\frac{1}{6} \eps^{1-2\beta}}\;.
\]
Thus, $m(\eps) / \sqrt{v(\eps)} = \tilde C \eps^{-\alpha + 1/4}$, which completes the proof of Theorem 
\ref{thm:linear}.

\section{The Overdamped Model}\label{sec:over}

The proof of Theorem~\ref{thm:overfull} consists of two parts. First, we show that given an arbitrary compact set $\caX$ containing a neighbourhood of the origin
(say $\caX = [-1,1]$), and given an arbitrary negative time $-T$, the process $q_t$ starting at $0$ at time $-\infty$ will be in $\caX$ with
very high probability at time $-T$. In a second part, we then take advantage of the Markovian nature of the process to restart it at time $-T$ and
to show that the conclusion of Theorem~\ref{thm:overfull} holds uniformly over initial conditions belonging to $\caX$ at time $-T$.
These two steps are formulated as Proposition~\ref{prop:remainbounded} and Proposition~\ref{prop:over} below. 
Theorem~\ref{thm:overfull} is then an immediate consequence of these
two results.

The first part can be formulated as follows:

\begin{proposition}\label{prop:remainbounded}
Let $\caX = [-1,1]$, let $T \geq 1$, and let $q_t$ solve (\ref{eq:SDEintrooverdamped}). Then, we have
\begin{equ}
\lim_{\sigma, \eps \to 0} \liminf_{s \to -\infty} \bbP^{s} \bigl(q_{-T} \in \caX\bigr) = 1\;.
\end{equ}
\end{proposition}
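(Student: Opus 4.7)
The approach I would take is to bound the second moment $\bbE^s[q_{-T}^2]$ directly and then apply Chebyshev's inequality. The intuition is that for $t \leq -T$ the drift $F(q,t) = tq - q^3 + \ve$ has its unique zero near $q^*(t) \approx \ve/|t|$ and is strongly contracting toward it, so the diffusion cannot make $q_t$ wander far from the origin before time $-T$; in fact the process `forgets' its initial condition almost instantly on the fast timescale $\ve$.

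The first step would be to apply It\^o's formula to $q_t^2$, after the usual localization by $\tau_N = \inf\{t : |q_t| \geq N\}$. One drops the (favorable) $-2 q_t^4/\ve$ term, and bounds the linear-in-$q$ contribution by $2 q_t \leq 1 + q_t^2$; provided $\ve \leq T$, so that $2t/\ve + 1 \leq t/\ve$ whenever $t \leq -T$, this yields
\begin{equ}
\dd q_t^2 \;\leq\; \Bigl[\frac{t}{\ve}\, q_t^2 + 1 + \frac{\si^2}{\ve}\Bigr]\, \dd t + \frac{2\si q_t}{\sqrt{\ve}}\, \dd W_t\;.
\end{equ}
Taking expectations and writing $f(t) := \bbE[q_{t\wedge\tau_N}^2]$, the martingale part drops and one obtains a scalar linear differential inequality $f'(t) \leq (t/\ve) f(t) + (1 + \si^2/\ve)$ with $f(s) = 0$.

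Solving this via the integrating factor $\e{-t^2/(2\ve)}$ and bounding the resulting Gaussian integral using Mill's ratio $\int_T^{\infty}\e{-v^2/(2\ve)}\,\dd v \leq (\ve/T)\e{-T^2/(2\ve)}$, one arrives at
\begin{equ}
f(-T) \;\leq\; \frac{\ve + \si^2}{T}\;,
\end{equ}
an estimate that is uniform in $s < -T$ and in $N$. Chebyshev's inequality then yields $\bbP^s\bigl(|q_{-T}| > 1\bigr) \leq (\ve + \si^2)/T$, and letting $\ve, \si \to 0$ completes the proof.

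The main technical point is the passage $N \to \infty$: one needs $\tau_N \to \infty$ almost surely and a limit argument (e.g.\ Fatou) to transfer the moment bound to the unstopped process. This is routine because the cubic confinement $-q^3/\ve$ prevents finite-time explosion and provides a uniform-in-$N$ bound on $\bbE[q_{t\wedge\tau_N}^4]$, for instance by the same Lyapunov calculation applied to $q^4$. Everything else is essentially a one-line Gaussian tail computation, reflecting the physical fact that on the interval $[s,-T]$ the process sits in a quadratic well of effective depth growing like $t^2/\ve$.
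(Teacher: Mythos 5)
Your proposal is correct and follows essentially the same route as the paper: apply It\^o's formula to $q\mapsto q^2$, drop the favorable $-2q^4/\ve$ term, absorb the linear drift $2q_t$ into the contraction coming from $2t q_t^2/\ve$, and deduce a uniform-in-$s$ bound on $\bbE[q_{-T}^2]$ that vanishes as $\sigma,\eps\to 0$, finishing with Chebyshev. The only cosmetic difference is that the paper uses $2q \leq q^2/\eps + \eps$ (keeping the stronger $-1/\eps$ decay and giving $\bbE[q_{-T}^2]\leq \eps^2+\sigma^2$), whereas you use $2q\leq 1+q^2$ and a Mill's-ratio estimate to get $(\eps+\sigma^2)/T$; both suffice.
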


\begin{proof}
Applying It\^o's formula to the function $q \mapsto q^2$, we obtain
\begin{equ}
{\dd \over \dd t} \bbE q_t^2 = \bbE \Bigl({2t\over \eps} q_t^2 - {2\over \eps} q_t^4 + 2q_t + {\sigma^2 \over \eps}\Bigr)
\leq -{1\over \eps} \bbE q_t^2 + \eps + {\sigma^2 \over \eps}\;,
\end{equ}
where we made use of the fact that $t \leq -1$. The claim then follows at once from the fact that we assumed 
that $\eps \to 0$ and $\sigma \to 0$.
\end{proof}

The remainder of this section is devoted to the proof of the following statement, where we denote by $\bbP^{-T,x}$ the
law of (\ref{eq:SDEintrooverdamped}) with initial condition $q_{-T} = x \in \caX$.

\begin{proposition}\label{prop:over}
Let $q_t$ solve (\ref{eq:SDEintrooverdamped}). There exist constants $c_1,\gamma > 0$ such that if $t_1 = c_1 \sqrt{\ve |\ln \si|}$ then
\begin{enumerate}
	\item (Fast Pulling) for any $\si^{4/3}|\ln \si|^{2/3} \ll \ve(\si) \ll 1$,
	\[
	\lim_{\sez} \inf_{x \in \caX} \bbP^{-T,x}\left\{\inf_{t_1 \leq t}\frac{q_t}{\sqrt{t}} > \gamma \right\} = 1\;.
	\]
	\item (Slow Pulling) for any $\si^2 |\ln \si|^3 \lesssim \ve(\si) \ll \si^{4/3}|\ln \si|^{-13/6}$,
	\[
	  \lim_{\sez}\sup_{x \in \caX}\left|\bbP^{-T,x}\left\{\inf_{t_1 \leq t}\frac{q_t}{\sqrt{t}} > \gamma\right\} - 1/2\right| = 0
	\]
	and
	\[
	\lim_{\sez}\sup_{x \in \caX}\left|\bbP^{-T,x}\left\{\sup_{t_1 \leq t}\frac{q_t}{\sqrt{t}} < -\gamma\right\} - 1/2\right| = 0\;.
	\]
\end{enumerate}
\end{proposition}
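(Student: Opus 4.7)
The plan is to split the interval $[-T,\infty)$ at the crossover time $t_1=c_1\sqrt{\ve|\ln\si|}$ into a \emph{relaxation phase} $[-T,-t_1]$, a \emph{bifurcation phase} $[-t_1,t_1]$, and a \emph{separation phase} $[t_1,\infty)$. On each phase I compare $q_t$ with a tractable reference process---respectively the quasi-static equilibrium of the drift, the linearisation of (\ref{eq:SDEintrooverdamped}) around the origin, and the linearisation around $\pm\sqrt{t}$---and control the nonlinear error by a bootstrap/Gronwall argument. The constants $c_1$ and $\gamma$ are tuned at the end so that the thresholds in the statement are matched.

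During the relaxation phase the drift $(tq-q^3+\ve)/\ve$ is strongly contractive towards the slow manifold $\bar q(t)$ defined by $t\bar q-\bar q^3+\ve=0$, which satisfies $\bar q(t)\asymp\ve/|t|$ throughout $[-T,-t_1]$; the linearised contraction rate $|t|/\ve$ is at least of order $\sqrt{|\ln\si|/\ve}$ on the whole interval. A standard Bernstein-type bound on $q_t-\bar q(t)$, in the spirit of the Berglund--Gentz slow-fast analysis, then gives $|q_{-t_1}|\ll\sqrt\ve$ with probability $1-o_\si(1)$, uniformly in the initial datum $x\in\caX$.

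In the bifurcation phase I compare $q_t$ with $\tilde q_t$ solving the linear SDE $\dd\tilde q_t=\tfrac1\ve(t\tilde q_t+\ve)\,\dd t+\tfrac{\si}{\sqrt\ve}\,\dd W_t$ with $\tilde q_{-t_1}=q_{-t_1}$. The integrating factor $\exp(-t^2/(2\ve))$ yields an explicit Gaussian whose mean and standard deviation at $t=t_1$ are of order $\sqrt\ve\,\e{t_1^2/(2\ve)}$ and $\si\ve^{-1/4}\,\e{t_1^2/(2\ve)}$ respectively, giving a signal-to-noise ratio of order $\ve^{3/4}/\si$. A bootstrap handles the dropped cubic term: as long as $\sup_{s\le t}|q_s|\le\delta$ on $[-t_1,t_1]$, its accumulated contribution is $O(\delta^3 t_1/\ve)$, which for suitable $c_1$ and $\delta$ stays well below both the signal and the noise. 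In the fast regime $\ve\gg\si^{4/3}|\ln\si|^{2/3}$ the signal dominates and exceeds $\sqrt{t_1}\asymp(\ve|\ln\si|)^{1/4}$, so $q_{t_1}/\sqrt{t_1}>2\gamma$ with probability tending to $1$; in the slow regime $\ve\ll\si^{4/3}|\ln\si|^{-13/6}$ the Gaussian is essentially centred with standard deviation far exceeding $\sqrt{t_1}$, so $\bbP(q_{t_1}>2\gamma\sqrt{t_1})$ and $\bbP(q_{t_1}<-2\gamma\sqrt{t_1})$ each tend to $1/2$.

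In the separation phase, conditional on $|q_{t_1}|>2\gamma\sqrt{t_1}$ of a definite sign $\eta\in\{\pm1\}$, the stable equilibrium $\eta\sqrt t$ has linearised rate $-2t/\ve$, which is strongly contractive for $t\ge t_1$, and the tilt $\ve$ is negligible compared to $t^{3/2}$. A Gronwall estimate on $q_t-\eta\sqrt t$ together with a Bernstein bound on the martingale part then shows that $\eta q_t/\sqrt t>\gamma$ persists for all $t\ge t_1$ with probability $1-o_\si(1)$. The main technical obstacle is the bifurcation phase: the exponential amplification $\e{t_1^2/(2\ve)}=\si^{-c_1^2/2}$ simultaneously enlarges the signal, the noise, the cubic correction and the relaxation-phase error, and the precise logarithmic corrections $|\ln\si|^{2/3}$ and $|\ln\si|^{-13/6}$ appearing in the statement emerge from the slack required to keep the cubic contribution well below the signal in the fast regime, and well below the Gaussian fluctuation in the slow regime.
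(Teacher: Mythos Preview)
Your three-phase architecture is sound and broadly matches the paper, but the bifurcation phase as you describe it has a genuine gap. You linearise on all of $[-t_1,t_1]$ and treat the cubic as a bootstrap perturbation of size $O(\delta^3 t_1/\ve)$. This estimate is wrong: the accumulated cubic, propagated through the integrating factor, is $\tfrac{1}{\ve}\int_{-t_1}^{t}\e{(t^2-s^2)/2\ve}|q_s|^3\,\dd s$, and at $t=t_1$ the integral kernel alone is of order $\sqrt{\ve}\,\e{t_1^2/2\ve}$, the same exponential that amplifies your signal. More importantly, the bootstrap cannot close. You need $q_{t_1}$ of order $\sqrt{t_1}\asymp(\ve|\ln\si|)^{1/4}$, so any a~priori bound $\delta$ must satisfy $\delta\gtrsim(\ve|\ln\si|)^{1/4}$; but then $\delta^3\gg\ve$, and the cubic correction $\delta^3\ve^{-1/2}\e{t_1^2/2\ve}$ dominates the deterministic signal $\sqrt{\ve}\,\e{t_1^2/2\ve}$. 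Put differently, once $|q_t|$ approaches $\sqrt{t}$ the drift $tq-q^3$ nearly vanishes while the linearised drift $tq$ is still large, so the linear process wildly overshoots and gives no useful \emph{lower} bound on $q_t$.

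The paper avoids this by not attempting a perturbative treatment of the cubic on $[\sqrt{\ve},t_1]$. In the fast regime it first shows $q_{\sqrt\ve}\asymp\sqrt\ve$ (where the cubic really is negligible), and then on $[\sqrt\ve,t_1]$ uses the \emph{structural} one-sided inequality $tq-q^3\ge\kappa\,tq$ valid for $0\le q\le\sqrt{(1-\kappa)t}$: this yields a genuine linear lower bound $q_t\ge q_t^\kappa$ that drives $q_t$ out through the curve $\sqrt{(1-\kappa)t}$ by time $O(\sqrt{\ve|\ln\si|})$, with the cubic absorbed rather than bounded. In the slow regime the paper does not linearise at all; it compares $q_t$ with the \emph{symmetric nonlinear} process $\tilde q_t$ solving $\dd\tilde q_t=\tfrac{1}{\ve}(t\tilde q_t-\tilde q_t^3)\,\dd t+\tfrac{\si}{\sqrt\ve}\,\dd W_t$, whose law is exactly even in $\tilde q$, and shows the asymmetric drift $+\ve$ contributes an error that is $o(1)$ relative to $|\tilde q_{\tau}|$ at the exit time from a diffusion-dominated strip (this is where the exponent $-13/6$ actually comes from). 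Your relaxation and separation phases are essentially right; it is the middle phase that needs one of these two devices.
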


Our approach in this section is based on that developed by Berglund and Gentz in \cite{BG02, BG}. They consider similar equations to (\ref{eq:SDEintrooverdamped}), but with drift terms $(1/\ve)f(q,t)$ such that $f(q,t) = -f(-q,t)$ and $f(0,0) = \partial_q f(0,0) = 0$. A simple example of such an $f$ is $f(q,t) = tq - q^3$. Our additional drift term arising from pulling means that we cannot directly apply their results, except in a few cases as will be made clear. 

\subsection{Fast pulling}\label{sec:fast}
We begin by considering the fast pulling regime from Proposition~\ref{prop:over}. In this case, the noise in the system is not strong enough to overcome the asymmetry caused by pulling and the qualitative behaviour of $q_t$ is the same as that of the deterministic solution $q_t^{\deter}$ of the ODE
\be\label{eq:qtdet}
\dot q_t^{\deter} = \frac{1}{\ve}(t q_t^{\deter} - (q_t^{\deter})^3 + \ve)\;, \quad q_{-T}^{\deter} = x\;.
\ee
In particular, we will see that $q_t^{\deter}$ falls into the right-hand well by a time of order $\sqrt{\ve |\ln \si|}$ after the bifurcation and so too does $q_t$. The strategy is as follows:
\begin{enumerate}
	\item Show that $q_t$ is of order $\sqrt{\ve}$ when $t = \sqrt{\ve}$.
	\item Show that $(q_t,t)$ then leaves the space-time set $\caK(\kappa)$ (see (\ref{eq:Kkappa})), by a time of order $\sqrt{\ve|\ln \si|}$.
	\item Show that $q_t$ approaches the right-hand well and stays in a small neighbourhood of it up until any time $t_2 > 0$.
	\item Show that by taking $t_2$ large enough, $q_t$ stays in a neighbourhood of the right-hand well of order $t^{1/2-\gamma}$ for any $0 < \gamma < 1/2$ and all $t \geq t_2$.
\end{enumerate}

\subsubsection{Step One:}
We begin by describing how $q_t^{\deter}$ behaves.
\begin{lemma}\label{lem:qtdet}
Let $q^{\deter}_t$ be the solution of (\ref{eq:qtdet}). Then we have, uniformly for all initial conditions $x \in \caX$,
\[
q^{\deter}_t \asymp
\begin{cases}
\ve/|t|\text{  for  }-T + \ve|\ln \ve| \leq t \leq -\sqrt{\ve}\\
\sqrt{\ve}\text{  for  }-\sqrt{\ve} \leq t\leq \sqrt{\ve}
\end{cases}
\]
and there is a constant $C>0$ such that for all $x \in \caX$, $|q_t^{\deter}| < C$ for all $-T \leq t \leq -T + \ve|\ln \ve|$.
\end{lemma}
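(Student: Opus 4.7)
The proof is a standard singular-perturbation (slow/fast) argument tailored to the cubic with $+\ve$ inhomogeneity, executed in three consecutive time windows. For the initial transient $t \in [-T, -T+\ve|\ln \ve|]$, I would first verify that $[-2,2]$ is positively invariant for \eqref{eq:qtdet}: at $q=\pm 2$ and $t\leq -1$ the drift $(1/\ve)(tq-q^3+\ve)$ points strictly inward uniformly in small $\ve$. Since $\caX\subset[-2,2]$, this immediately yields the uniform bound $|q_t^{\deter}|<C$ required by the final assertion of the lemma, and in fact provides a uniform $L^\infty$ bound on all of $[-T,\sqrt{\ve}]$.

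For the outer tracking on $[-T+\ve|\ln \ve|,-\sqrt{\ve}]$, introduce the slow manifold $q^*(t)$, defined as the positive real root of $tq-q^3+\ve=0$ (unique and smooth for $t\leq -\sqrt{\ve}$ by the implicit function theorem). A direct analysis of the cubic gives $q^*(t)\asymp \ve/|t|$ on this interval. Writing $q_t^{\deter}=q^*(t)+r_t$ and using the defining relation for $q^*$, the remainder satisfies
\[
\dot r_t = -\dot q^*(t) + \frac{1}{\ve}\bigl((t - 3 q^*(t)^2)\, r_t - 3 q^*(t)\, r_t^2 - r_t^3\bigr),
\]
with linear rate bounded above by $-|t|/\ve$ and $|\dot q^*|\lesssim \ve/t^2$. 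A standard Gronwall / slow-manifold estimate then shows that the contraction dominates the motion of the slow manifold: after the transient length $\ve|\ln \ve|$ the remainder $|r_t|$ is at most $C\ve^2/|t|^3$ uniformly in $x\in\caX$, which is $o(q^*(t))$ because $|t|\geq\sqrt{\ve}\gg \ve^{2/3}$. This yields $q_t^{\deter}\asymp \ve/|t|$ on the whole window.

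In the inner (bifurcation) region $|t|\leq\sqrt{\ve}$ the effective contraction rate degenerates to $O(\ve^{-1/2})$, so I would instead blow up coordinates by setting $\tau=t/\sqrt{\ve}$ and $y_\tau = q_t^{\deter}/\sqrt{\ve}$, under which \eqref{eq:qtdet} becomes the regular ODE
\[
\frac{\dd y_\tau}{\dd \tau} = \tau\, y_\tau - \sqrt{\ve}\, y_\tau^3 + 1.
\]
By the outer analysis one has $y_{-1}\asymp 1$. Comparison with the linear equation $\dd y/\dd\tau = \tau y + 1$, whose solution on $[-1,1]$ is explicit via $\int e^{(\tau^2-s^2)/2}\,\dd s$ and stays strictly positive and of order one, together with a bootstrap absorbing the $O(\sqrt{\ve})$ cubic correction, gives $y_\tau\asymp 1$ for $\tau\in[-1,1]$, i.e., $q_t^{\deter}\asymp \sqrt{\ve}$ on $|t|\leq\sqrt{\ve}$.

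The delicate point, and in my view the main obstacle, is the matching at $t=-\sqrt{\ve}$: the outer tracking error $O(\ve^2/|t|^3)$ is of the \emph{same} order $\sqrt{\ve}$ as $q^*(t)$ at this endpoint, so one cannot conclude the positive lower bound $y_{-1}\gtrsim 1$ by purely perturbative means. One must instead exploit the positivity of both $q^*$ and the forcing $+1$ in the rescaled equation to rule out the solution crossing zero through the transition. Ensuring further that all constants in the various $\asymp$ relations remain absolute and uniform in $x\in\caX$ requires some careful Gronwall bookkeeping, but introduces no new ideas.
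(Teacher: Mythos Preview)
Your approach via slow-manifold tracking plus inner blow-up is sound in outline but differs substantially from the paper's argument, which is more elementary. The paper also introduces the slow manifold $q_+^*(t)$ (the positive root of $tq-q^3+\ve=0$), but uses it only during the initial transient: a careful two-sided comparison, splitting into the cases $x\ge q_+^*(-T)$ and $x<q_+^*(-T)$, shows $q_{t_0}^{\det}\asymp\ve$ and in particular $q_{t_0}^{\det}>0$ at $t_0=-T+\ve|\ln\ve|$. From there the paper abandons perturbative tracking entirely. Positivity for all $t\ge t_0$ is immediate because $\dot q^{\det}=1$ whenever $q^{\det}=0$; the upper bound then follows from $\dot q^{\det}\le(1/\ve)(tq^{\det}+\ve)$ (drop the cubic), and the lower bound from $\dot q^{\det}\ge(1/\ve)(2tq^{\det}+\ve)$, valid while $(q^{\det})^2\le|t|$. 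Both comparison ODEs are linear and solved explicitly via integrals of $\e{(t^2-s^2)/c\ve}$, which deliver the $\ve/|t|$ and $\sqrt\ve$ asymptotics in one stroke, covering outer and inner regions simultaneously and never producing a matching problem.

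The ``delicate point'' you flag is genuine for your route, and in fact your outer paragraph already overstates its own conclusion: the bound $|r_t|\le C\ve^2/|t|^3$ is \emph{not} $o(q^*(t))$ at the endpoint $|t|=\sqrt\ve$, since the ratio $\ve/|t|^2$ equals $1$ there, not $o(1)$; so the tracking argument alone cannot give the positive lower bound $y_{-1}\gtrsim 1$ you need to launch the inner analysis. Your proposed fix (use positivity of $q^*$ and of the $+1$ forcing) is exactly what the paper exploits, but the paper builds it in from the start rather than invoking it as a rescue at the matching boundary. If you want to keep your scheme, the cleanest repair is to note, as the paper does, that once $q^{\det}>0$ it stays positive, and then obtain the lower bound by a single linear sub-solution on all of $[t_0,\sqrt\ve]$ rather than by tracking $q^*$.
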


\begin{proof}
Consider the equation
\be\label{eq:x*}
tq_+^*(t) - q_+^*(t)^3 + \ve = 0\;.
\ee
By fixing $t$ and differentiating the left-hand side with respect to $q_+^*$, we see that for $t < 0$ it has no turning points and so admits a unique real-valued solution. Furthermore, we can check that $q_+^*(t) \asymp \ve$ and $(q_+^*)'(t) = q_+^*(t)/(3q_+^*(t)^2 - t) \asymp \ve$ for negative $t$ bounded away from zero.

Suppose first that the initial condition satisfies $x \geq q_+^*(-T)$. Define $z_t = q_t^{\deter} - q_+^*(t)$. As long as $z_t \geq 0$, we have $\dot z_t \leq tz_t/\ve$ so that
\[
0 \leq q_t^{\deter} - q_+^*(t) \leq (x - q_+^*(-T))\e{(t^2-T^2)/2\ve}\;.
\]
Let $t_0 = -T + \ve |\ln \ve|$. If $z_t < 0$ for some $-T < t < t_0$, which means that $q_t^{\deter} \asymp \ve$, then the analysis below for $t \geq t_0$ can be applied from that time. Otherwise, the above inequality shows that $q_{t_0}^{\deter} \asymp \ve$. 

If $x \leq q_+^*(-T)$ then we define $z_t = q_+^*(t) - q_t^{\deter}$. As $(q_+^*)'(t) > 0$ for negative $t$ bounded away from zero, we have $z_t \geq 0$ for such $t$. In this case, there is $c_1 > 0$, independent of $x \in \caX$, such that
\[
\dot z_t \leq c_1 \ve + \frac{1}{\ve}(tz_t + 3q_+^*(t) z_t^2)
\]
for all $-T \leq t \leq t_0$. Furthermore, as long as $z_t \leq -t/6q_+^*(t)$ (which is satisfied by $z(-T)$ for all $x \in \caX$ by taking $\ve$ sufficiently small) then $3q_+^*(t) z_t^2 \leq -tz_t/2$ and so
\[
\dot z_t \leq c_1 \ve + \frac{1}{2\ve}tz_t\;.
\]
This tells us
\[
0 \leq q_+^*(t) - q_t^{\deter} \leq (q^*(-T) - x)\e{(t^2-T^2)/4\ve} + c_1 \ve \int_{-T}^t \! \e{(t^2-s^2)/4\ve}\, \dd s\;,
\]
which shows that $z_t \leq -t/6q_+^*(t)$ for all $-T \leq t \leq t_0$ and we can use the above inequality to again see that $q_{t_0}^{\deter} \asymp \ve$.

We now analyse the behaviour for $t \geq t_0$. As $q_{t_0}^{\deter} > 0$ and $\dot q_t^{\deter} = 1$ whenever $q_t^{\deter} = 0$, it follows that $q_t^{\deter} \geq 0$ for all $t \geq t_0$. Therefore, for $t \geq t_0$ we have
\[
\dot q_t^{\deter} \leq \frac{1}{\ve}(t q_t^{\deter} + \ve)
\]
and so
\be\label{eq:qdetupper}
q_t^{\deter} \leq q_{t_0}^{\deter}\e{(t^2-t_0^2)/2\ve} + \int_{t_0}^t \e{(t^2-s^2)/2\eps}\dd s \leq
 \begin{cases} c_2\ve/|t|\text{  for  }t_0 \leq t \leq -\sqrt{\ve}\\
 c_2\sqrt{\ve}\text{  for  }-\sqrt{\ve} \leq t\leq \sqrt{\ve}
\end{cases} 
\ee
for some constant $c_2 > 0$. To obtain the lower bound, we use that for $t\leq 0$, $tq - q^3 \geq 2tq$ as long as $q^2 \leq |t|$. By taking $\ve$ sufficiently small, we have $0 < q_{t_0} \leq \sqrt{|t_0|}$ for all initial conditions $x \in \caX$. As long as $0 \leq q_t^{\deter} \leq \sqrt{|t|}$, then
\[
\dot q_t^{\deter} \geq \frac{1}{\ve}(2 t q_t^{\deter} + \ve)
\]
and
\[
q_t^{\deter} \geq q_{t_0}\e{(t^2-t_0^2)/\ve} + \int_{t_0}^t \e{(t^2-s^2)/\ve}\dd s\;.
\]
By (\ref{eq:qdetupper}), we certainly have $q_t^{\deter} \leq \sqrt{|t|}$ for $t \leq -\sqrt{\ve}$ and so the above inequality gives the corresponding lower bound for $q_t^{\deter}$ up until this time. For $-\sqrt{\ve} \leq t \leq \sqrt{\ve}$, we then have $t q_t^{\deter} - (q_t^{\deter})^3 \geq -C\ve$ for some constant $C>0$, so that $q_t^{\deter}$ remains of order $\sqrt{\ve}$ in this interval. This completes the proof.

\end{proof}

We now show that the deviation process $y_t \isdefby q_t - q_t^{\deter}$ satisfies $|y(\sqrt{\ve})| < h \ve^{-1/4}$ for some $h \ll \ve^{3/4}$, which will complete Step One. The process $y_t$ solves
\be\label{eq:y}
\dd y_t = \frac{1}{\ve}[a(t)y_t + b(y_t,t)]\, \dd t + \frac{\si}{\sqrt{\ve}}\, \dd W_t\;, \quad y(-T) = 0\;,
\ee
where $a(t) = t-3(q_t^{\deter})^2$ and $b(y_t,t) = -3q_t^{\deter}y_t^2 - y_t^3$. For all pairs $(y,t) \in \caB(h)$ for a choice of $h = \caO(\ve^{1/4})$ (see (\ref{eq:Bh}) and Lemma \ref{lem:atxit}), we have $|b(y,t)| \leq My^2$. Solving (\ref{eq:y}) gives
\be\label{eq:decomposition}
y_t = \frac{\si}{\sqrt{\ve}}\int_{-T}^t \e{\alpha(t,s)/\ve}\, \dd W_s + \frac{1}{\ve}\int_{-T}^t \e{\alpha(t,s)/\ve}b(y_s,s)\, \dd s \bydefis y_t^0 + y_t^1\; ,
\ee
where $\alpha(t,s) = \int_s^t a(u)\, \dd u$.

We now define the space-time set $\caB(h)$ mentioned above. If we write $\Var(y_t^0) = \si^2 v(t)$, then we find that $v(t)$ solves the ODE
\[
\ve \dot v = 2a(t)v + 1\;, \quad v(-T) = 0\; .
\]
Let $\xi(t)$ be a particular solution of this ODE with nonzero initial condition, given by
\be\label{eq:xi}
\xi(t) = \xi(-T)\e{2\alpha(t,-T)/\ve} + \frac{1}{\ve}\int_{-T}^t \e{2\alpha(t,s)/\ve}\, \dd s\;,\qquad \xi(-T) = \frac{1}{2|a(-T)|} \; .
\ee
Then we define
\be\label{eq:Bh}
\caB(h) = \{(y,t): -T \leq t \leq \sqrt{\ve}, |y| < h\sqrt{\xi(t)}\}
\ee
and the stopping time $\tau_{\caB(h)} = \inf\{t \geq -T: (y_t,t) \notin \caB(h)\}$. Before estimating $\tau_{\caB(h)}$, we must first understand how $a(t)$ and $\xi(t)$ behave:
\begin{lemma}\label{lem:atxit}
Let $q_t^{\deter}$ solve (\ref{eq:qtdet}), define $a(t) = t - 3(q_t^{\deter})^2$ and let $\xi(t)$ be given by (\ref{eq:xi}). Then, uniformly for $x \in \caX$, $a(t) \asymp t$ for $-T \leq t \leq -\sqrt{\ve}$ and $|a(t)| = \caO(\sqrt{\ve})$ for $|t| \leq \sqrt{\ve}$. We also have, uniformly for $x \in \caX$,
\[
\xi(t) \asymp \frac{1}{|t|\vee \sqrt{\ve}}
\]
and $|\dot \xi(t)| = \caO(1/\ve)$ for all $-T \leq t \leq \sqrt{\ve}$.
\end{lemma}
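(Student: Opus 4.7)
My plan is to treat the bounds on $a(t)$ and $\xi(t)$ separately, feeding the estimates from Lemma~\ref{lem:qtdet} into the integral representation \eqref{eq:xi} and then extracting the leading order by an integration by parts that is aligned with the quasi-static ansatz $\xi \approx -1/(2a)$.

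The bounds on $a(t) = t - 3(q_t^{\deter})^2$ come directly from Lemma~\ref{lem:qtdet}. On the inner window $|t| \leq \sqrt{\ve}$, $q_t^{\deter} \asymp \sqrt{\ve}$ gives $|a(t)| \leq |t| + 3(q_t^{\deter})^2 = \caO(\sqrt{\ve})$. On the outer window $-T \leq t \leq -\sqrt{\ve}$, I split off the short initial transient $[-T, -T+\ve|\ln\ve|]$, on which $q_t^{\deter}$ is bounded but $|t|$ is bounded away from zero, and on the complement $q_t^{\deter} \asymp \ve/|t|$ gives $(q_t^{\deter})^2 \lesssim \ve^2/t^2 \ll |t|$; in both subintervals the correction is of smaller order than $|t|$, so $a(t) \asymp t$. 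Uniformity in $x \in \caX$ is inherited from Lemma~\ref{lem:qtdet}.

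For $\xi$, the crucial observation is that the initial value $\xi(-T) = 1/(2|a(-T)|) = -1/(2a(-T))$ is precisely the quasi-static root of $2a(t)\xi + 1 = 0$ at $t = -T$. Writing $\ve^{-1}\e{2\alpha(t,s)/\ve} = -(2a(s))^{-1}\partial_s \e{2\alpha(t,s)/\ve}$ and integrating by parts in \eqref{eq:xi}, the boundary term at $s = -T$ exactly cancels the contribution $\xi(-T)\e{2\alpha(t,-T)/\ve}$, yielding
\begin{equation*}
\xi(t) = -\frac{1}{2a(t)} \;-\; \int_{-T}^t \frac{a'(s)}{2\,a(s)^2}\,\e{2\alpha(t,s)/\ve}\, \dd s\;.
\end{equation*}
For $-T \leq t \leq -\sqrt{\ve}$ the first term is $\asymp 1/|t|$. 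To show the remainder is lower order I will estimate $a'(s) = 1 - 6q_s^{\deter}\dot q_s^{\deter} = \caO(1)$, reading $\dot q_s^{\deter} = \caO(1)$ off the ODE \eqref{eq:qtdet}, and then use the substitution $u = s^2 - t^2$ together with the estimate $\alpha(t,s) \asymp (t^2-s^2)/2$ to recognise the integral as a Laplace integral of order $\caO(\ve/|t|^3)$. This is dominated by the leading $1/|t|$ provided $|t| \geq K\sqrt{\ve}$ for a suitable fixed $K$, giving $\xi(t) \asymp 1/|t|$ on that part of the outer window.

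For the remaining inner window $|t| \leq K\sqrt{\ve}$, the bounds on $a$ give $|\alpha(t,s)| \leq (t-s)\cdot \caO(\sqrt{\ve}) = \caO(\ve)$, so $\e{2\alpha(t,s)/\ve}$ is pinched between positive constants. Restarting \eqref{eq:xi} from $-K\sqrt{\ve}$ (using the value $\xi(-K\sqrt{\ve}) \asymp 1/\sqrt{\ve}$ obtained above) then gives $\xi(t) \asymp 1/\sqrt{\ve} + (t+K\sqrt{\ve})/\ve \asymp 1/\sqrt{\ve}$, matching the claimed rate $1/(|t|\vee\sqrt{\ve})$. Finally, the bound on $\dot\xi$ is immediate from the defining ODE: in both regimes $|a(t)\xi(t)| = \caO(1)$, so $|\dot\xi| = |(2a\xi + 1)/\ve| = \caO(1/\ve)$.

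The main obstacle will be the Laplace estimate of the remainder integral: the crude bound $a'(s)/a(s)^2 = \caO(1/s^2)$ does not in itself beat $1/|t|$ near the interface $|t| \sim \sqrt{\ve}$, so one has to exploit the exponential localization of the kernel around $s = t$ to gain the extra factor of $\sqrt{\ve}/|t|$. This is precisely the range where the quasi-static approximation becomes marginal, which is why the separate inner-window argument above is needed to patch the estimate at $|t| \asymp \sqrt{\ve}$.
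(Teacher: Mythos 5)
Your bounds on $a(t)$, the pinching argument on the inner window $|t|\leq K\sqrt{\ve}$, and the final $\dot\xi$ estimate are all fine, and the integration-by-parts identity $\xi(t)=-\tfrac{1}{2a(t)}-\int_{-T}^t \tfrac{a'(s)}{2a(s)^2}\e{2\alpha(t,s)/\ve}\,\dd s$ is correctly derived (the boundary term at $s=-T$ does cancel $\xi(-T)\e{2\alpha(t,-T)/\ve}$). The gap is in the claim that $a'(s)=1-6q_s^{\deter}\dot q_s^{\deter}=\caO(1)$, ``reading $\dot q_s^{\deter}=\caO(1)$ off the ODE.'' This is false on the initial transient $[-T,-T+\ve|\ln\ve|]$: there Lemma~\ref{lem:qtdet} only gives $|q_s^{\deter}|<C$, not the quasi-static tracking $q_s^{\deter}\asymp\ve/|s|$, and the ODE has a $1/\ve$ prefactor, so $\dot q_s^{\deter}$ is of size $\caO(1/\ve)$ for a generic $x\in\caX$ (take $x=1$, $T=1$: $\dot q^{\deter}_{-T}=(-2+\ve)/\ve\approx-2/\ve$). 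Hence $a'(s)=\caO(1/\ve)$ on the transient, your Laplace bound $\caO(\ve/|t|^3)$ for the remainder does not hold uniformly, and after tracking the exponential localization one only gets a transient contribution of order $\caO(1/T^2)$, which for fixed $T\geq1$ is not manifestly small against the leading term $-\tfrac{1}{2a(t)}\asymp 1/T$. Since the statement must hold uniformly over $x\in\caX$ for any fixed $T\geq1$, the argument does not close.

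The route hinted at by the paper (``consider $a(t)=t$ as an example'') avoids this entirely: one estimates $\ve^{-1}\int_{-T}^t\e{2\alpha(t,s)/\ve}\,\dd s$ directly using only the two-sided bound $a(u)\asymp u$ for $u\leq-\sqrt\ve$ (say $4u\leq a(u)\leq u$, since $|q^{\deter}_u|\leq1$), which gives $2(t^2-s^2)\leq\alpha(t,s)\leq(t^2-s^2)/2$; the substitution $u=(s^2-t^2)/\ve$ then yields matching upper and lower bounds of order $(|t|\vee\sqrt\ve)^{-1}$ without ever differentiating $a$, and the manifestly positive boundary term $\xi(-T)\e{2\alpha(t,-T)/\ve}$ supplies the lower bound when $t$ is close to $-T$ and the integral is small. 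If you want to keep the integration-by-parts decomposition, you must split the $s$-integral at $-T+\ve|\ln\ve|$ and control the transient piece not via a pointwise bound on $a'/a^2$ but via the exponential decay of the kernel (together with the exponential decay of $|q_s^{\deter}-q^*_+(s)|$ from the proof of Lemma~\ref{lem:qtdet}), and you will still need a separate argument for the lower bound on $\xi$ near $t=-T$ since the remainder has a definite sign ($a'>0$ throughout) and subtracts from the leading term.
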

\begin{proof}
The assertions about $a(t)$ follow from Lemma \ref{lem:qtdet}. We can use this to tell us how $\xi(t)$ behaves, for which it is helpful to consider the case $a(t)=t$ as an example. Furthermore, since $\xi$ solves $\ve \dot \xi = 2a(t)\xi + 1$, this tells us that $|\dot \xi(t)| = \caO(1/\ve)$.
\end{proof}

Having established the behaviour of all relevant quantities, we can now prove the following proposition telling us that sample paths are likely to remain in $\caB(h)$ for all times $t \leq \sqrt{\ve}$.
\begin{proposition}\label{thm:Bh}
There exists a constant $C>0$ such that for all $\ve$ sufficiently small, all $\si < h \ll \ve^{3/4}$ and all initial conditions $x \in \caX$,
\[
\bbP^{-T,0}\{\tau_{\caB(h)} < \sqrt{\ve}\} \leq \frac{C}{\ve^2}\exp\left\{-\frac{h^2}{2\si^2 }(1 - r(h,\ve))\right\} \; ,
\]
where $r(h,\ve) = \caO(\sqrt{\ve}) + \caO(h\ve^{-3/4})$ uniformly for $x \in \caX$.
\end{proposition}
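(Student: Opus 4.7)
The plan is to follow the Berglund--Gentz strategy encoded in the decomposition $y_t = y_t^0 + y_t^1$ of (\ref{eq:decomposition}), handling the Gaussian linear part and the nonlinear correction separately. First I would establish a deterministic bound on $y_t^1$ valid on the event $\{\tau_{\caB(h)} > t\}$. On that event, $|y_s| \leq h\sqrt{\xi(s)}$ for all $s \leq t$, and since $(y_s,s) \in \caB(h)$ with $h = \caO(\ve^{1/4})$ puts us in the regime where $|b(y,s)| \leq M y^2$, we get
\[
|y_t^1| \;\leq\; \frac{Mh^2}{\ve}\int_{-T}^t \e{\alpha(t,s)/\ve}\xi(s)\,\dd s.
\]
Exploiting that $\ve\dot\xi = 2a(s)\xi + 1$ and that $a(s) < 0$ away from $t=0$, the exponential kernel is concentrated on an interval of width $\ve/(|a(t)|\vee \sqrt{\ve})$ around $s=t$, so the integral is of order $\xi(t)\cdot \ve/(|a(t)|\vee\sqrt{\ve})$. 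Combined with the estimates $\xi(t)\asymp 1/(|t|\vee\sqrt{\ve})$ and $|a(t)|\asymp |t|\vee\sqrt{\ve}$ from Lemma~\ref{lem:atxit}, this yields $|y_t^1| \leq C h^2 \ve^{-3/4}\sqrt{\xi(t)}$, i.e.\ the nonlinear correction is a factor $\caO(h\ve^{-3/4})$ smaller than $h\sqrt{\xi(t)}$, which is small precisely in the regime $h \ll \ve^{3/4}$.

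Consequently, the exit $\tau_{\caB(h)} < \sqrt{\ve}$ forces the Gaussian part to satisfy $|y_t^0| \geq h\sqrt{\xi(t)}(1 - r(h,\ve))$ at some $t \leq \sqrt{\ve}$, and the problem reduces to estimating
\[
\bbP\Bigl\{\sup_{-T \leq t \leq \sqrt{\ve}} \tfrac{|y_t^0|}{\sqrt{\xi(t)}} \geq h(1-r)\Bigr\}.
\]
The process $y_t^0$ is centred Gaussian with variance $\sigma^2 v(t)$, where $v$ satisfies the same linear ODE as $\xi$ but with $v(-T)=0 < \xi(-T)$, so $v(t) \leq \xi(t)$ and the normalised process $y_t^0/\sqrt{\xi(t)}$ has variance $\leq \sigma^2$. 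Thus at each fixed time the required tail bound $2\exp(-h^2(1-r)^2/(2\sigma^2))$ holds pointwise.

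To turn this pointwise bound into a uniform one, I would partition $[-T,\sqrt{\ve}]$ into $N = \caO(1/\ve^2)$ subintervals $[t_k,t_{k+1}]$ chosen so that $\xi$ and $a$ vary by a bounded multiplicative factor on each (using $|\dot\xi|=\caO(1/\ve)$ and $\xi \gtrsim \ve^{-1/2}$). On each subinterval, an integrating factor turns $y_t^0/\sqrt{\xi(t)}$ into a (time-changed) Brownian motion, and a Bernstein-type maximal inequality controls the fluctuation of the supremum over $[t_k,t_{k+1}]$ relative to the value at $t_k$; this only eats a further $r$-type correction into the exponent. Applying the pointwise Gaussian bound at each $t_k$ and a union bound over the $\caO(1/\ve^2)$ endpoints then produces the prefactor $C/\ve^2$.

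The main obstacle, as usual in this type of argument, is the nonlinear estimate near $t = 0$: the kernel $\e{\alpha(t,s)/\ve}$ degenerates there because $a(t) \to 0$, and the bound for $\int \e{\alpha(t,s)/\ve}\xi(s)\,\dd s$ must be tracked uniformly across the transition from the $a(t) \asymp t$ regime to the $|a(t)| = \caO(\sqrt{\ve})$ regime. Getting the exponent $h\ve^{-3/4}$ correct in $r(h,\ve)$ — rather than a weaker power — is precisely what reveals $\ve^{3/4}$ as the sharp threshold for $h$ and justifies the hypothesis $h \ll \ve^{3/4}$. The choice of partition width for the union bound must then be coordinated with these estimates so that the Gaussian contribution to $r$ is $\caO(\sqrt{\ve})$, matching the stated form of $r(h,\ve)$.
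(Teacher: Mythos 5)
Your proposal follows the same strategy as the paper: the decomposition $y_t = y_t^0 + y_t^1$ from \eqref{eq:decomposition}, a deterministic bound on the nonlinear part giving a correction of size $\caO(h\ve^{-3/4})$, and a Gaussian maximal-inequality estimate on $y_t^0/\sqrt{\xi(t)}$ via partition and union bound. The paper handles the Gaussian supremum by citing the analogue of Proposition~4.3 from \cite{BG02} rather than re-deriving the partition argument, and its one substantive remark is precisely the point you gloss over: near $t=0$ the function $a(t)=t-3(q_t^{\deter})^2$ is not $t+\caO(t^2)$ as in \cite{BG02} (here $a(t)<0$ for $t\ll\ve$ because $q_t^{\deter}\asymp\sqrt{\ve}$), so one must check that $|\alpha(t,s)|=\caO((t-s)\sqrt{\ve})$ on $[-\sqrt{\ve},\sqrt{\ve}]$ still suffices to bound the per-interval fluctuation term ($P_k$ in their equation (4.23)). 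You acknowledge a degeneracy near $t=0$ but attribute it mainly to the nonlinear estimate; the paper's actual concern is that same degeneracy appearing in the linear, time-changed Brownian step. Your bound on $y_t^1$ keeps $\xi(s)$ inside the integral and uses the concentration of the kernel, whereas the paper pulls out $\sup_{u\le t}\xi(u)$ and bounds $\frac{1}{\ve}\int\e{\alpha(t,u)/\ve}\dd u$ directly; both give $|y_t^1|/\sqrt{\xi(t)}=\caO(h^2\ve^{-3/4})$. One small slip: you write $\xi\gtrsim\ve^{-1/2}$ when justifying $N=\caO(\ve^{-2})$; in fact $\xi(t)\asymp 1/(|t|\vee\sqrt{\ve})$ so $\xi\lesssim\ve^{-1/2}$, with minimum $\caO(1)$ at $t=-T$ — the $\ve^{-2}$ subinterval count actually comes from choosing the partition by increments of $\alpha$ of order $\ve^2$, as in \cite{BG02}, not from a uniform-width partition controlled by $\dot\xi$. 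This is cosmetic and does not affect the exponent, which is what the proposition is really after.
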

\begin{remark}
Choosing $h = k \, \si \sqrt{|\ln \si|}$ with $k > 0$ large enough guarantees that the right-hand side tends to zero as $\sez$ and that $h\sqrt{\xi(\sqrt{\ve})} \ll \sqrt{\ve}$, in which case we may take $q(\sqrt{\ve}) \asymp \sqrt{\ve}$ uniformly for all $x \in \caX$.
\end{remark}
\begin{proof}
Recall the decomposition $y_t = y^0_t + y_t^1$ from (\ref{eq:decomposition}). We have for all $t < \tau_{\caB(h)} \wedge \sqrt{\ve}$,
\begin{align*}
\frac{|y^1_t|}{\sqrt{\xi(t)}} & \leq \frac{1}{\sqrt{\xi(t)}}\frac{1}{\ve}\int_{-T}^t \e{\alpha(t,u)/\ve}|b(y_u,u)|\, \dd u\\
& \leq \frac{M h^2}{\sqrt{\xi(t)}}\left(\sup_{-T\leq u\leq t}\xi(u)\right)\frac{1}{\ve}\int_{-T}^t \e{\alpha(t,u)/\ve}\, \dd u \\
& \leq \frac{c_1 M h^2}{\ve^{3/4}}
\end{align*}
for some constant $c_1 > 0$, where we obtain the final inequality by bounding
\[
\frac{1}{\ve}\int_{-T}^t \e{\alpha(t,u)/\ve}\, \dd u \leq
\begin{cases}
C/|t| \text{ for } t \leq -\sqrt{\ve}\\
C/\sqrt{\ve} \text{ for } |t| \leq \sqrt{\ve}
\end{cases}
\]
and
\[
\frac{1}{\sqrt{\xi(t)}}\left(\sup_{-T\leq u\leq t}\xi(u)\right) \leq
\begin{cases}
C/\sqrt{|t|} \text{ for } t \leq -\sqrt{\ve}\\
C\ve^{-1/4} \text{ for } |t| \leq \sqrt{\ve}
\end{cases}
\]
Therefore, if $|y_t^0|/\sqrt{\xi(t)} < h(1 - c_1Mh\ve^{-3/4})$ for all $-T \leq t \leq \sqrt{\ve}$ then we must have $\tau_{\caB(h)} > \sqrt{\ve}$. Letting $H = h(1 - c_1Mh\ve^{-3/4})$, we obtain exactly as in the proof of Proposition 4.3 from \cite{BG02} that for sufficiently small $\ve$,
\[
\bbP^{-T,0}\left\{\sup_{-T \leq t \leq \sqrt{\ve}} \frac{|y^0_s|}{\sqrt{\xi(t)}} > H \right\} \leq \frac{C}{\ve^2}\exp\left\{-\frac{H^2}{2\si^2}(1 - \caO(\sqrt{\ve}))\right\}\; .
\]
for some $C>0$. Note we cannot apply that proposition directly because our function $a(t)$ behaves differently for $|t| \leq \sqrt{\ve}$ than the corresponding function there. In particular, here $a(t) < 0$ for $t \ll \ve$, whereas in \cite{BG02} $a(t) = t + \caO(t^2)$. However, in our case, $|\alpha(t,s)| = \caO((t-s)\sqrt{\ve})$ for $-\sqrt{\ve} \leq s \leq t \leq \sqrt{\ve}$ and we can check that this still allows us to suitably bound the term $P_k$ appearing in equation (4.23) of their proof. 

\end{proof}

\subsubsection{Step Two:}
We define for $\kappa > 0$ the space-time set
\be\label{eq:Kkappa}
\caK(\kappa) = \left\{(q,t): t \geq \sqrt{\ve}, q^2 \geq (1-\kappa)t \right\}\;.
\ee
The boundary of $\caK(\kappa)$ consists of the curves $(\pm \sqrt{(1-\kappa)t},t)$. For the present case, we only need to consider $q \geq 0$ (for the slow pulling regime in Section \ref{sec:slow}, we will also consider $q < 0$). Let $t_0 \geq \sqrt{\ve}$ and suppose that $0 < q(t_0) < \sqrt{(1-\kappa)t_0}$. Then for $t > t_0$ and as long as $0 \leq q_t \leq \sqrt{(1-\kappa)t}$, we have $q_t \geq q_t^{\kappa}$, where
\be\label{eq:xkappa}
\dd q_t^{\kappa} = \frac{1}{\ve}\kappa t q_t^{\kappa}\, \dd t + \frac{\si}{\sqrt{\ve}}\, \dd W_t\;,\quad q^{\kappa}(t_0) = q(t_0)/2 \;.
\ee
Solving this SDE gives
\[
q_t^{\kappa} = \frac{1}{2}q(t_0)\e{(t^2 - t_0^2)/2\ve} + \frac{\si}{\sqrt{\ve}}\, \int_{t_0}^t \! \e{(t^2-s^2)/2\ve}\, \dd W_s \;.
\]
We now state two lemmas that are analogues of Lemmas 3.2.10 and 3.2.11 from \cite{BG} and are proved similarly. For the present section, we will only need to take $t_0 = \sqrt{\ve}$ and, by Step One, $q_0 = q(\sqrt{\ve}) \asymp \sqrt{\ve}$. For the slow pulling regime, other initial conditions will be considered. Let $\tau_{\caK(\kappa)} = \inf\{t \geq t_0: (q_t,t) \notin \caK(\kappa)\}$ and $\tau^0_{\kappa} = \inf\{t \geq t_0 : q_t^{\kappa} \leq 0\}$. 

\begin{lemma}\label{lem:exitK}
Assume that $q_t$ starts at time $t_0 \geq \sqrt{\ve}$ in $q_0 > 0$, where $(q_0,t_0) \in \caK(\kappa)$. Then there is $C>0$, independent of $t_0$, $q_0$ and $\kappa$, such that for all $t \geq t_0 + \ve/t_0$,
\[
\bbP^{t_0,q_0}\{\tau_{\caK(\kappa)} \geq t, \tau_0^{\kappa} \geq t\} \leq \frac{C}{\si }\sqrt{t_0}\sqrt{t}\e{-\kappa (t^2 - t_0^2)/2\ve}\;.
\]
\end{lemma}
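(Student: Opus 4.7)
The plan is to use the comparison $q_s\ge q_s^\kappa$ (valid as long as $q_s$ remains in the strip $[0,\sqrt{(1-\kappa)s}\,]$, as established in the discussion preceding the lemma) to reduce the statement to a one-dimensional Gaussian estimate, and then to extract the exponential decay via a Laplace lower bound on $\Var(q_t^\kappa)$.

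On the event $\{\tau_{\caK(\kappa)}\ge t,\,\tau_0^\kappa\ge t\}$, the trajectory $q_s$ remains in $[0,\sqrt{(1-\kappa)s}\,]$ for every $s\in[t_0,t]$, so the comparison applies throughout, and, together with $q_t^\kappa\ge 0$ and $q_t\le\sqrt{(1-\kappa)t}$, it sandwiches the linear process as $0\le q_t^\kappa\le\sqrt{(1-\kappa)t}$. Hence
\[
\bbP^{t_0,q_0}\bigl\{\tau_{\caK(\kappa)}\ge t,\,\tau_0^\kappa\ge t\bigr\}\;\le\;\bbP^{t_0,q_0}\bigl\{q_t^\kappa\in[0,\sqrt{(1-\kappa)t}\,]\bigr\}\;\le\;\frac{\sqrt{t}}{\sqrt{2\pi v_t}}\;,
\]
where I have used that $q_t^\kappa$ is Gaussian with variance $v_t=(\si^2/\ve)\int_{t_0}^t\e{\kappa(t^2-s^2)/\ve}\,\dd s$ and bounded its density by the supremum $1/\sqrt{2\pi v_t}$ over an interval of length at most $\sqrt{t}$.

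It then remains to prove the variance estimate $v_t\gtrsim (\si^2/t_0)\,\e{\kappa(t^2-t_0^2)/\ve}$, which yields the stated bound directly upon substitution. I would obtain it by a Laplace expansion of the integrand around its maximum at $s=t_0$: writing $s^2-t_0^2=2t_0(s-t_0)+(s-t_0)^2$ and factoring out $\e{\kappa(t^2-t_0^2)/\ve}$ reduces the task to bounding $\int_0^{t-t_0}\e{-2\kappa t_0 u/\ve}\,\e{-\kappa u^2/\ve}\,\dd u$ from below by $c\,\ve/(\kappa t_0)$ up to absorbable constants. The main obstacle is making this uniform in $\kappa>0$, since the natural length scale $\ve/(\kappa t_0)$ of the linear factor diverges as $\kappa\downarrow 0$. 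This is handled by splitting into the regimes $\kappa\ge\ve/(4t_0^2)$ and $\kappa<\ve/(4t_0^2)$: in the first, the quadratic factor $\e{-\kappa u^2/\ve}$ is bounded below by a constant on $[0,\ve/(2\kappa t_0)]$ and the linear integral contributes $\ve/(2\kappa t_0)\cdot(1-e^{-1})$; in the second, one restricts to $[0,\tfrac12\sqrt{\ve/\kappa}]$, where the linear factor contributes $t_0\sqrt{\kappa/\ve}\le 1/2$, converted via $1-e^{-x}\ge x/2$ (valid for $x\le 1$) into the required $\sqrt{\ve/\kappa}$. The hypothesis $t-t_0\ge\ve/t_0$ together with $t_0\ge\sqrt{\ve}$ guarantees that both intervals fit inside $[t_0,t]$, and the result is the direct analogue for our setting of Lemma~3.2.10 of~\cite{BG}, whose proof supplies the template.
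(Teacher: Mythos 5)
Your overall strategy matches what the paper points to (it refers the reader to Lemma~3.2.10 of \cite{BG} for the proof): restrict to the event, use the comparison $q_s\ge q_s^\kappa$ throughout $[t_0,t]$, observe that $q_t^\kappa$ is then pinned in $[0,\sqrt{(1-\kappa)t}]$, bound this Gaussian probability by $\sqrt{t}/\sqrt{2\pi v_t}$, and finish with the variance lower bound $v_t\gtrsim(\si^2/t_0)\,\e{\kappa(t^2-t_0^2)/\ve}$. All of this is correct and is the intended argument.

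The gap is in the Laplace step. You assert that ``the hypothesis $t-t_0\ge\ve/t_0$ together with $t_0\ge\sqrt{\ve}$ guarantees that both intervals fit inside $[t_0,t]$,'' but this is false for both of your regimes. In the regime $\kappa\ge\ve/(4t_0^2)$ the interval $[0,\ve/(2\kappa t_0)]$ has length $\ve/(2\kappa t_0)$, which exceeds $\ve/t_0$ whenever $\kappa<1/2$; since the hypothesis allows $t-t_0$ to equal $\ve/t_0$ exactly, the interval need not fit. In the regime $\kappa<\ve/(4t_0^2)$, the interval $[0,\tfrac12\sqrt{\ve/\kappa}]$ has length $>t_0\ge\sqrt{\ve}\ge\ve/t_0$, so again it never fits when $t-t_0=\ve/t_0$. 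The underlying issue is that you are chasing the bound $c\,\ve/(\kappa t_0)$ for $\int_0^{t-t_0}\e{-\kappa(2t_0u+u^2)/\ve}\,\dd u$, which is stronger than needed and genuinely unattainable when the integration range $t-t_0$ is shorter than the linear decay scale $\ve/(\kappa t_0)$.

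The repair is both simpler and eliminates the case split. Since $\kappa<1$ is forced by the geometry of $\caK(\kappa)$ (the boundary $\sqrt{(1-\kappa)t}$ must be real), integrate only over $[0,\ve/t_0]$, which fits by the hypothesis $t-t_0\ge\ve/t_0$. On this interval
\[
\frac{\kappa(2t_0u+u^2)}{\ve}\le\kappa\Bigl(2+\frac{\ve}{t_0^2}\Bigr)\le 3\kappa\le 3\;,
\]
using $t_0\ge\sqrt{\ve}$, so the integrand is at least $\e{-3}$ and
\[
\int_0^{t-t_0}\e{-\kappa(2t_0u+u^2)/\ve}\,\dd u\;\ge\;\frac{\ve}{t_0}\,\e{-3}\;.
\]
This yields $v_t\ge\e{-3}\,(\si^2/t_0)\,\e{\kappa(t^2-t_0^2)/\ve}$ directly, with a constant manifestly independent of $t_0$, $q_0$, $\kappa$, and the rest of your argument then closes as you described.
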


\begin{lemma}\label{lem:exitK2}
Let $q_t^{\kappa}$ start at time $t_0 \geq \sqrt{\ve}$ in $q_0/2 > 0$. Then there exist constants $C_1,\,C_2 > 0$, independent of $t_0$, $q_0$ and $\kappa$, such that for all $t \geq t_0$, the probability of reaching zero before time $t$ satisfies the bound
\[
\bbP^{t_0,q_0}\{\tau_0^{\kappa} < t\} \leq \frac{C_1 \si}{q_0 \sqrt{t_0}}\exp\left\{-\frac{C_2 q_0^{\,2} t_0 }{\si^2}\right\}\;.
\]
\end{lemma}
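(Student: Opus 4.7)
The plan is to reduce the one-sided exit problem for $q_t^{\kappa}$ to a first-passage estimate for a standard Brownian motion. Since \eqref{eq:xkappa} is linear, the drift can be eliminated by the integrating factor: letting $u_t = \e{-\kappa(t^2-t_0^2)/2\ve} q_t^{\kappa}$, It\^o's formula gives
\[
\dd u_t = \frac{\si}{\sqrt{\ve}}\, \e{-\kappa(t^2-t_0^2)/2\ve}\, \dd W_t\;, \qquad u_{t_0} = q_0/2\;.
\]
Since the exponential prefactor is strictly positive, $\tau_0^{\kappa} = \inf\{t \geq t_0 : u_t \leq 0\}$, so it suffices to estimate the probability that the continuous martingale $u$ hits zero before time $t$.

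The quadratic variation of $u$ is
\[
\langle u\rangle_t = \frac{\si^2}{\ve} \int_{t_0}^t \e{-\kappa(s^2-t_0^2)/\ve}\, \dd s\;.
\]
Using the elementary inequality $s^2 - t_0^2 \geq 2 t_0 (s - t_0)$ for $s \geq t_0$, one finds $\langle u\rangle_t \leq \langle u\rangle_\infty \leq \si^2/(2\kappa t_0)$ uniformly in $t$. By the Dambis--Dubins--Schwarz theorem (applied on a possibly enlarged probability space, since $\langle u\rangle_\infty < \infty$), there is a standard Brownian motion $B$ such that $u_t = q_0/2 + B_{\langle u\rangle_t}$. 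The monotonicity of $s \mapsto \langle u\rangle_s$ then yields
\[
\bbP^{t_0,q_0}\{\tau_0^{\kappa} < t\} \leq \bbP\Bigl\{\inf_{0 \leq s \leq T} B_s \leq -q_0/2\Bigr\}\;, \qquad T = \frac{\si^2}{2\kappa t_0}\;.
\]

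The claimed bound then follows from the standard Gaussian tail estimate. By the reflection principle, the right-hand side equals $2\bbP\{B_T \leq -q_0/2\}$, and combining with $\int_a^\infty \e{-x^2/2T}\,\dd x \leq (T/a) \e{-a^2/2T}$ for $a = q_0/2$ produces precisely a bound of the form $(C_1 \si)/(q_0 \sqrt{t_0})\, \exp\{-C_2 q_0^2 t_0 / \si^2\}$, with $C_1$ and $C_2$ depending only on $\kappa$ (and uniform in $\kappa$ in a compact subinterval of $(0,1)$, which is all that is needed for the applications in the rest of the section). No real obstacle arises: the only nontrivial ingredient is the finiteness of $\langle u\rangle_\infty$, which is immediate from the Gaussian decay of the integrand together with the assumption $t_0 \geq \sqrt{\ve} > 0$.
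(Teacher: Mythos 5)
Your argument is correct and is essentially the standard route (integrating factor to produce a martingale, Dambis--Dubins--Schwarz time change to reduce to a first-passage problem for Brownian motion with bounded time horizon $\langle u\rangle_\infty \leq \si^2/(2\kappa t_0)$, then reflection principle and a Gaussian tail bound), which is also how Lemma~3.2.11 in \cite{BG}, the result the paper cites as its analogue, proceeds. Your remark that the constants depend on $\kappa$ but are uniform over a compact subinterval of $(0,1)$ is the right reading, since $\kappa$ is ultimately taken in $(1/2,2/3)$.
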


This second lemma shows that when $q_0 \sqrt{t_0} \gg \si$, the linear process $q_t^{\kappa}$ is unlikely to return to zero for any time $t \geq t_0$, while the first lemma shows that as $t$ increases the probability of $q_t$ remaining in $\caK(\kappa)$ decreases. Therefore, we have $q_t \geq q_t^{\kappa}$ until time $\tau_{\caK(\kappa)}$ and so $q_t$ must exit $\caK(\kappa)$ through the curve $\sqrt{(1-\kappa)t}$. Indeed, there are constants $C_1,\,C_2 > 0$ such that for initial time $t_0 = \sqrt{\ve}$ and any initial position $q_0 \asymp \sqrt{\ve}$, we have for any $t \geq 2\sqrt{\ve}$ that
\be\label{eq:exitKupper}
\bbP^{\sqrt{\ve},q_0}\{\tau_{\caK(\kappa)} < t,\, \tau_0^{\kappa} > t\} \geq 1 - \frac{C_1\ve^{1/4}}{\si}\sqrt{t}\e{-\kappa t^2/2\ve} - \frac{C_1 \si}{\ve^{3/4}}\exp \left\{-\frac{C_2 \, \ve^{3/2} }{\si^2}\right\}\;.
\ee
In the present fast pulling regime, $\ve \gg \si^{4/3}$ and so the third term on the right-hand side tends to zero as $\sez$. Picking $t = \sqrt{2k\ve|\ln \si|}$, we see the second term also tends to zero as long as $k > 1/\kappa$. By a time of order $\sqrt{\ve|\ln \si|}$, all paths will have left $\caK(\kappa)$ through its upper boundary.
\subsubsection{Step Three:}
Firstly, we will see how deterministic solutions behave when started from the boundary of $\caK(\kappa)$. For this, we let $q_+^*(t)$ be the same solution of (\ref{eq:x*}) that we considered in the proof of Lemma \ref{lem:qtdet}, i.e. the unique real-valued solution existing for all times $t \geq -T$. For $t>0$ and $\ve$ sufficiently small, we have $\sqrt{t} \leq q_+^*(t) \leq \sqrt{t} + \ve/t$, which can easily be seen by the intermediate value theorem. For the sake of brevity, we shall write $\tau$ to mean $\tau_{\caK(\kappa)}$. The following proposition tells us how $q_t^{\deter,\tau}$ behaves, where $q_t^{\deter,\tau}$ solves
\be\label{eq:xtkappadet}
\dot q_t^{\deter,\tau} = \frac{1}{\ve}(t q_t^{\deter,\tau} - (q_t^{\deter,\tau})^3 + \ve)\;,\qquad q_{\tau}^{\deter,\tau} = \sqrt{(1-\kappa)\tau}\;.
\ee
\begin{proposition}\label{prop:xkappa}
Assume that $\kappa \in (1/2,2/3)$ and let $\eta = 2 - 3\kappa > 0$. There is a constant $C>0$ such that the solution, $q_t^{\deter,\tau},$ of (\ref{eq:xtkappadet}) satisfies
\[
0 \leq q_+^*(t) - q_t^{\deter,\tau} \leq C\left(\frac{\ve}{t^{3/2}} + (q^*(\tau) - q^{\deter,\tau}(\tau))\e{-\eta(t^2 - \tau^2)/2\ve}\right)
\]
for all $t \geq \tau$ and $\ve$ sufficiently small.
\end{proposition}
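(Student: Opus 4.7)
The plan is to set $z_t = q_+^*(t) - q_t^{\deter,\tau}$ and derive an exactly linear ODE for $z_t$. The key algebraic step is the factorisation
\begin{equation*}
tw - w^3 + \ve = \bigl(q_+^*(t) - w\bigr)\bigl(w^2 + q_+^*(t)\, w + q_+^*(t)^2 - t\bigr),
\end{equation*}
which follows from $q_+^*(t)^3 - t q_+^*(t) = \ve$. Substituting $w = q_t^{\deter,\tau}$ into (\ref{eq:xtkappadet}) yields
\begin{equation*}
\dot z_t = \dot q_+^*(t) - \frac{g(t)\, z_t}{\ve}, \qquad g(t) := (q_t^{\deter,\tau})^2 + q_+^*(t)\, q_t^{\deter,\tau} + q_+^*(t)^2 - t,
\end{equation*}
which is linear in $z_t$ once $g(t)$ is regarded as a known function of $t$.

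Before applying variation of parameters I would establish two \emph{a priori} inequalities. First, $z_t \geq 0$ on $[\tau,\infty)$: at $t = \tau$ we have $q_+^*(\tau) \geq \sqrt{\tau} > \sqrt{(1-\kappa)\tau}$ since $\kappa > 0$, so $z(\tau) > 0$; and at any putative first time $t_0 > \tau$ at which $z$ might vanish, $\dot z(t_0) = \dot q_+^*(t_0) - 0 > 0$, contradicting the definition of $t_0$. Second, $q_t^{\deter,\tau} \geq \sqrt{(1-\kappa) t}$ for all $t \geq \tau$: on the curve $q = \sqrt{(1-\kappa) t}$ the drift in (\ref{eq:xtkappadet}) equals $\kappa t\sqrt{(1-\kappa)t}/\ve + 1$, which for $\ve$ small dominates $\tfrac{\dd}{\dd t}\sqrt{(1-\kappa) t} = O(1/\sqrt t)$, so the solution crosses above the curve at $t = \tau$ and cannot later fall back to it. Combining these with $q_+^*(t) \geq \sqrt t$, I bound
\begin{equation*}
g(t) \geq (1-\kappa)\,t + \sqrt{1-\kappa}\,t = (1 - \kappa + \sqrt{1-\kappa})\,t \geq \eta t,
\end{equation*}
the last inequality being equivalent to $\sqrt{1-\kappa} \geq 1 - 2\kappa$, which is automatic for $\kappa \geq 1/2$. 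This is the one point where the hypothesis $\kappa \in (1/2,2/3)$ enters.

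With $g(t) \geq \eta t$ established, variation of parameters gives
\begin{equation*}
0 \leq z_t \leq z(\tau)\, \e{-\eta(t^2-\tau^2)/2\ve} + \int_\tau^t \dot q_+^*(s)\, \e{-\eta(t^2-s^2)/2\ve}\, \dd s,
\end{equation*}
so the remaining task, which I expect to be the main technical obstacle, is to show that the integral is bounded by a constant multiple of $\ve/t^{3/2}$ uniformly in $t \geq \tau$. Differentiating the defining equation for $q_+^*$ gives $\dot q_+^*(s) = q_+^*(s)/(3 q_+^*(s)^2 - s) = O(1/\sqrt s)$ for $s \geq \tau \geq \sqrt \ve$. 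I would treat two sub-regimes separately: for $\tau \leq t \leq 2\tau$, the bound $t^2 - s^2 \geq 2\tau(t-s)$ gives that the integral is at most $C\ve/\tau^{3/2}$, which since $\tau \geq t/2$ is bounded by $2^{3/2}C\ve/t^{3/2}$; for $t \geq 2\tau$, I split the range at $s_0 = t/\sqrt 2$, obtaining an $O(\ve/t^{3/2})$ contribution from $[s_0,t]$ (using $t^2 - s^2 \geq t(t-s)$ and $\sqrt s \asymp \sqrt t$) and a contribution from $[\tau, s_0]$ of size at most $C\sqrt t\,\e{-\eta t^2/4\ve}$, which is also $O(\ve/t^{3/2})$ since $t^2 \geq 4\ve$ in this regime forces $t^2 \e{-\eta t^2/4\ve} \leq C\ve$.
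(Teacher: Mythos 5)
Your proof is correct and takes a genuinely more self-contained route than the paper, which simply defers to Proposition~4.11 of Berglund--Gentz \cite{BG02} and records how the constants $a_0^*$, $M^*$, $K^*$ in that argument change in the present setting. The BG02 scheme Taylor-expands the drift about the slow manifold $q_+^*$ and must then track a nonlinear remainder; your observation that $tw - w^3 + \ve = (q_+^*(t)-w)\bigl(w^2 + q_+^*(t)w + q_+^*(t)^2 - t\bigr)$ makes the equation for $z_t = q_+^*(t) - q_t^{\deter,\tau}$ \emph{exactly} linear with coefficient $-g(t)/\ve$, so there is no remainder to estimate. Your two a priori bounds, the lower bound $g(t) \geq \eta t$, and the final Laplace-type splitting of $\int_\tau^t \dot q_+^*(s)\e{-\eta(t^2-s^2)/2\ve}\dd s$ are all correct. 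One small inaccuracy of emphasis: you say the hypothesis $\kappa \in (1/2, 2/3)$ enters only at the inequality $1-\kappa+\sqrt{1-\kappa} \geq 2-3\kappa$; in fact that inequality is true for all $\kappa \leq 3/4$, while the constraint $\kappa > 1/2$ is genuinely needed earlier, in your second a priori bound $q_t^{\deter,\tau} \geq \sqrt{(1-\kappa)t}$. For $t$ near $\sqrt{\ve}$ the drift on the boundary curve, $\kappa\sqrt{1-\kappa}\,t^{3/2}/\ve + 1$, and the derivative $\tfrac12\sqrt{1-\kappa}\,t^{-1/2}$ of that curve are both of order $\ve^{-1/4}$, and their ratio approaches $2\kappa$; thus the strict dominance you invoke holds precisely because $\kappa > 1/2$, not merely "for $\ve$ small''. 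The role of $\kappa < 2/3$ is, as you say, just to ensure $\eta > 0$.
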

\begin{remark}
The condition $\kappa > 1/2$ guarantees that paths do not re-enter $\caK(\kappa)$ after leaving, while $\kappa < 2/3$ ensures that the potential is convex outside of $\caK(\kappa)$. 
\end{remark}
\begin{proof}
The inequality $q_t^{\deter,\tau} \leq q_+^*(t)$ follows since $q^{\deter,\tau}(\tau) < q^*_+(\tau)$ and
\be\label{eq:q*+ prime}
(q_+^*)'(t) = \frac{q_+^*(t)}{3q_+^*(t)^2 - t} > 0\;.
\ee
The proof of the other inequality follows along the same lines as that given in \cite[Proposition~4.11]{BG02}. Note, however, that unlike there we only need to take $\ve$ sufficiently small and not $t$. This is because in our case the value of $a_0^*$, which is defined in equation (4.99) of \cite{BG02}, is given by $-2(1+ o_{\ve}(1))$, rather than $-2(1+o_{t}(1))$. Similarly, $M^* = 3(1 + o_{\ve}(1))$. As $q_+^*(t) \leq \sqrt{t} + \ve/t \leq (3/2)\sqrt{t}$ for $t \geq \sqrt{\ve}$ and $\ve$ small, we can use (\ref{eq:q*+ prime}) to show $(q_+^*)'(t) \leq (3/4)t^{-1/2}$, giving $K^* = 3/4$, where $K^*$ is also defined in (4.99). In \cite{BG02}, $K^* = 1/2$, but the proof just requires that $K^* < 1$.
\end{proof}
Now that we understand how $q_t^{\deter,\tau}$ behaves, the final step is to show that $q_t$, starting at the same point, stays close. Having shown that the analogue of Proposition 4.11 from \cite{BG02} holds, the proofs of the subsequent bounds there can easily be extended to our case and we now show what these are. Let
\[
\xi^{\tau}(t) = \frac{1}{2|a^{\tau}(\tau)|}\e{2\alpha^{\tau}(t,\tau)/\ve} + \frac{1}{\ve}\int_{\tau}^t \e{2\alpha^{\tau}(t,s)/\ve}\, \dd s \;,
\]
where $a^{\tau}(\tau) = t - 3(q_t^{\deter,\tau})^2$ is the linearisation of the drift term around $q_t^{\deter,\tau}$ and $\alpha^{\tau}(t,s) = \int_s^t a(u)\, \dd u$. As is shown in Lemma 4.12 from \cite{BG02}, it follows from Proposition \ref{prop:xkappa} above that $|a^{\tau}(\tau)| \asymp t$ so that $\xi^{\tau}(t) \asymp 1/t$.

Now we write
\be\label{eq:Atau}
\caA^{\tau}(h) = \{(q,t): t \geq \tau, |q - q_t^{\deter,\tau}| \leq h \sqrt{\xi^{\tau}(t)}\}
\ee
and let $\tau_{\caA^{\tau}(h)} = \inf\{t \geq \tau: (q_t,t) \notin \caA^{\tau}(h)\}$. The following bound on $\tau_{\caA^{\tau}(h)}$ follows by the analogue of Theorem 2.12 in \cite{BG02}. It tells us that for $\kappa \in (1/2,2/3)$ and any $t_2 > 0$, there exist constants $C,\, h_0 > 0$ such that for $h < h_0 \tau$ and $\ve$ sufficiently small,
\be\label{eq:Atau bound}
\bbP^{\tau,\sqrt{(1-\kappa)\tau}}\{\tau_{\caA^{\tau}(h)}<t_2\} \leq \frac{C}{\ve^2}\exp\left\{-\frac{1}{2}\frac{h^2}{\si^2}\left[1-\caO(\ve) - \caO\left(\frac{h}{\tau}\right)\right]\right\}\;.
\ee
The right-hand side becomes small by choosing $h = k \, \si \sqrt{|\ln \si|}$ for $k$ large enough, for which we note that $\tau \geq \sqrt{\ve}$ by definition so that $h \ll \tau$.

\subsubsection{Step Four:} Let us suppose that $t_2 \geq 1$. By Step Three, we may write $q(t_2) = \sqrt{t_2} + \caO(\si \sqrt{|\ln \si|}) + \caO(\ve)$ independently of $\tau_{\caK(\kappa)}$. For a given $q(t_2)$, let $q_t^{\deter}$ be the corresponding deterministic solution starting at the same point. We can again obtain a similar bound as in Proposition \ref{prop:xkappa}, but for simplicity let us just say that $(9/10)\sqrt{t} \leq q_t^{\deter} \leq (11/10)\sqrt{t}$ for all $t \geq 1$. Letting $y_t = q_t - q_t^{\deter}$, we define $\tau(\gamma) = \inf\{t \geq t_2 : |y_t| > t^{1/2 - \gamma}\}$ for $0 < \gamma < 1/2$. We again decompose $y_t$ into a linear part, $y_t^0$, and nonlinear part, $y_t^1$, as in (\ref{eq:decomposition}). Then $a(t) = t - 3(q_t^{\deter})^2 \asymp -t$ uniformly for all $t \geq 1$ and the function $b(y,t)$ containing the nonlinear terms now satisfies $|b(y_t,t)| < M \sqrt{t}\,y_t^2$ for all $t < \tau(\gamma)$ and some constant $M>0$ independent of $t_2$.  We will show that $\bbP\{\tau(\gamma) < \infty\} \to 0$. For $t \leq \tau(\gamma)$, we have
\begin{align*}
|y_t^1| & \leq \frac{1}{\ve}\int_{t_2}^t \e{\alpha(t,s)/\ve}|b(y_u,u)|\, \dd u\\
& \leq  \frac{M t^{3/2 - 2\gamma}}{\ve}\int_{t_2}^t \e{\alpha(t,s)/\ve}\, \dd u\\
& < C t^{1/2 - 2\gamma}\;,
\end{align*}
where the final inequality holds uniformly in $t$ and the constant $C > 0$ is independent of $t_2$. Therefore, if $|y_t^0| < H(t)$ for all $t \geq t_2$, where $H(t) = t^{1/2-\gamma}(1 - Ct^{-\gamma})$, then we must have $\tau(\gamma) = \infty$. Note that $1 - Ct^{-\gamma} > 0$ and $\dot H (t) > 0$ for all $t \geq t_2$ by taking $t_2$ large enough. We have
\[
\bbP\left\{\sup_{t \geq t_2}\frac{|y_t^0|}{H(t)} > 1 \right\} \leq \sum_{j=0}^{\infty}\bbP \left\{\sup_{s_j \leq t \leq s_{j+1}}\frac{|y_t^0|}{H(t)} > 1 \right\}\;,
\]
where $t_2 = s_0 < s_1 < \ldots$ is chosen by $-\alpha(s_{j+1},s_j) = \ve^2$. Note that, uniformly in $j$, we have $s_{j+1}^2 - s_0^2 \asymp -\alpha(s_{j+1},s_0) = -\alpha(s_{j+1},s_j) - \ldots - \alpha(s_1,s_0) = (j+1)\ve^2$, which shows that $s_j \to \infty$ as $j \to \infty$. Call the summand on the right-hand side above $P_j$. As $H(t)$ is increasing, we can further bound $P_j$ by replacing $H(t)$ with $H(s_j)$. We can also use for $s_j \leq t \leq s_{j+1}$ the inequality
\[
|y_t^0| = \left|\frac{\si}{\sqrt{\ve}}\int_{s_0}^t \! \e{\alpha(t,s)/\ve}\, \dd W_s \right| \leq \e{\alpha(s_j)/\ve}\left|\frac{\si}{\sqrt{\ve}}\int_{s_0}^t \! \e{-\alpha(s)/\ve}\, \dd W_s \right|\;.
\]
This gives for all $j \geq 0$,
\begin{align*}
P_j & \leq \bbP \left\{\sup_{s_0 \leq t \leq s_{j+1}}\left|\int_{s_0}^t \! \e{-\alpha(s)/\ve}\, \dd W_s\right| > \frac{\sqrt{\ve}}{\si}\e{-\alpha(s_j)/\ve}H(s_j) \right\}\\
& \leq 2 \exp\left\{-\frac{\ve \e{2\alpha(s_{j+1},s_j)/\ve}H(s_j)^2}{2\si^2\int_{s_0}^{s_{j+1}}\! \e{2\alpha(s_{j+1},s)/\ve}\,\dd s}\right\}\\
& \leq 2 \exp\left\{-\frac{c_1 s_j H(s_j)^2}{2\si^2 }\right\}\;,
\end{align*}
where the constant $c_1 > 0$ in the final inequality is independent of $j$. Note that the second inequality comes from Lemma B.1.3 in the Appendix of \cite{BG} and the final inequality uses that $\alpha(s_{j+1},s) \asymp -(s_{j+1}^2 - s^2)$ uniformly for all $s_0 \leq s \leq s_{j+1}$ and all $j$. Summing over $j \geq 1$ and using that $s_j - s_{j-1} \geq C\ve^2/s_j$ uniformly in $j$, we have
\begin{align*}
\sum_{j=1}^{\infty}P_j & = \sum_{j=1}^{\infty}P_j \frac{s_j-s_{j-1}}{s_j - s_{j-1}}\\
& \leq \frac{C}{\ve^2}\sum_{j=1}^{\infty} P_j s_j(s_j-s_{j-1})\\
& \leq C\int_{t_2}^{\infty}\! sP(s)\, \dd s\;,
\end{align*}
where
\[
P(s) = 2 \exp\left\{-\frac{c_1 s H(s)^2}{2\si^2 }\right\} \leq 2 \exp\left\{-\frac{c_2 s^{2(1-\gamma)}}{\si^2 }\right\}
\]
and $c_2 > 0$ is a constant. We have used that $sP(s)$ is decreasing when bounding the series by the integral above.  Then
\[
\int_{t_2}^{\infty}\! sP(s)\, \dd s \leq C \si^2 \exp\{-c_2t_2^{\,2(1-\gamma)}/\si^2\}
\]
for some constant $C>0$ depending on $t_2$ and $\gamma$, so that
\[
\bbP\left\{\sup_{t \geq t_2}\frac{|y_t^0|}{H(t)} > 1 \right\} \leq P_0 + \frac{C \si^2}{\ve^2} \exp\{-c_2 t_2^{\,2(1-\gamma)}/\si^2\}
\]
and the right-hand side tends to zero as $\sez$.
\subsection{Slow pulling}\label{sec:slow}
We now consider the slow pulling regime from Proposition~\ref{prop:over}. In this case, the noise dominates the dynamics and cancels out the asymmetry caused by pulling. The process $q_t$ should, therefore, behave similarly to $\tq_t$, where
\be\label{eq:tx}
\dd \tq_t = \frac{1}{\eps}\left(t \tq_t-\tq_t^3\right)\,\dd t + \frac{\si}{\sqrt{\eps}}\,\dd W_t\;,\qquad \tq(-T)= 0\;.
\ee
As we have chosen $\tq(-T)= 0$, the law of $\tq$ is entirely symmetric about zero. The strategy is as follows:
\begin{enumerate}
	\item Recall from \cite{BG02,BG} that $\tq_t$ stays close to the origin with high probability. At time $t = \sqrt{\ve}$, its typical spreading is of order $\si \ve^{-1/4}\sqrt{|\ln \si|}$.
	\item Show that paths of $q_t$ stay close to those of $\tq_t$ until $\tq_t$ leaves the diffusion-dominated strip $\caS(h)$ defined below.
	\item Show that $q_t$ then exits the slightly larger strip, $\caK(\kappa)$, without returning to the origin.
	\item Show that $q_t$ then finally falls into the potential well on the same side as it left $\caK(\kappa)$ and remains there. 
\end{enumerate}
\subsubsection{Step One:}
This step is the same as Step One from the previous section, except now we are analysing the behaviour of $\tq_t$. Unlike in the previous section, we can use directly the results of \cite{BG02,BG}, which we now summarise.  We again define the function $\xi(t)$ as in the last section and now $a(t) \isdefby t - 3(\tq_t^{\deter})^2$, where
\[
\dot \tq^{\deter}_t = \frac{1}{\ve}(t\tq^{\deter}_t - (\tq^{\deter}_t)^3)\;,\qquad \tq^{\deter}_{-T} = 0\;.
\]
Clearly, $\tq^{\deter} \equiv 0$ and so now $a(t) \equiv t$. Again, $\xi(t) \asymp 1/(|t|\wedge \sqrt{\ve})$ for $-T \leq t \leq \sqrt{\ve}$. We define the space-time domain
\[
\caB(h) = \{(\tilde{q},t): -T \leq t \leq \sqrt{\ve}, |\tilde{q}| < h\sqrt{\xi(t)}\}
\]
and the stopping time $\tau_{\caB(h)} = \inf\{t \geq -T: (\tq_t,t) \notin \caB(h)\}$. Applying Theorem 2.10 from \cite{BG02}, we see that there exist constants $C,\, h_0 > 0$ such that for $\ve$ sufficiently small and $h \leq h_0 \sqrt{\ve}$,
\[
\bbP^{-T,0}\{\tau_{\caB(h)} < \sqrt{\ve}\} \leq \frac{C}{\ve^2}\exp\left\{-\frac{h^2}{2\si^2}\left[1-\caO(\sqrt{\ve}) - \caO\left(\frac{h^2}{\ve}\right)\right]\right\}\;.
\]
Choosing $h = k\,\si\sqrt{|\ln \si|}$ for $k$ large enough, the right-hand side tends to zero. At time $\sqrt{\ve}$, we may take $|\tq(\sqrt{\ve})| = \caO(\si \ve^{-1/4}\sqrt{|\ln \si|})$.
\subsubsection{Step Two:}
In the fast pulling section, we saw that at time $\sqrt{\ve}$, $q(\sqrt{\ve}) \asymp \sqrt{\ve}$, from which we could then show its subsequent exit from $\caK(\kappa)$. Now we are considering the exit of $\tq_t$ from $\caK(\kappa)$ and with other values of $\tq(\sqrt{\ve})$ as found in Step One. Before looking at the exit of $\tq_t$ from $\caK(\kappa)$, we must first analyse its exit from a smaller strip. We define for $t \geq \sqrt{\ve}$ the diffusion-dominated strip
\[
\caS(h) = \left\{(\tilde{q},t): t \geq \sqrt{\ve}, |\tilde{q}| < \frac{h}{\sqrt{t}}\right\}
\]
and the stopping time $\tau_{\caS(h)} = \inf\{t \geq \sqrt{\eps}: (\tq_t,t) \notin \caS(h)\}$. See Figure 3 in \cite{BG02} and Figure 3.12 in \cite{BG} for an illustration of $\caS(h)$ and $\caK(\kappa)$ (note that $\caK(\kappa)$ is denoted $\caD(\kappa)$ in \cite{BG02}). Let $h^* \isdefby h_0\,\si \sqrt{|\ln \si|}$, where $h_0 > 0$ is a constant sufficiently large so that $(\tq(\sqrt{\ve}),\sqrt{\ve}) \in \caS(h^*)$. Applying Proposition 4.7 from \cite{BG02} with the choices $h = h^*$ and $\mu = 2$, we see that there exists $C>0$ such that for all $\si$ sufficiently small and all initial conditions $(q_0,\sqrt{\ve}) \in \caS(h^*)$,
\[
\bbP^{\sqrt{\ve},q_0}\{\tau_{\caS(h^*)} \geq t\} \leq \left(\frac{h^*}{\si}\right)^2 \exp\left\{-\frac{2}{3}\frac{(t^2 - \ve)}{2\ve}[1 - \caO(1/\ln(h^*/\si))]\right\}\;,
\]
as long as $\si|\ln \si|^{3/2} = \caO(\sqrt{\ve})$, which we already assume in the slow pulling regime. We can check that by taking $t = \sqrt{2k\eps \ln(h^*/\si)}$ with $k>0$ sufficiently large, the right-hand side tends to zero. For such a choice of $k$, we define $t^* = \sqrt{2k\eps \ln(h^*/\si)}$ and henceforth assume $\tau_{\caS(h^*)} \leq t^*$.

The important point here is that by symmetry, $\tq_t$ exits $\caS(h^*)$ through either boundary with equal probability. Now that we understand the behaviour of $\tq_t$ up until its exit from $\caS(h^*)$, we turn to $q_t$. The following lemma shows that $q_t$ is close to $\tq_t$ at time $\tau_{\caS(h^*)}$.
\begin{lemma}\label{xtx}
If $\ve(\si) \ll \si^{4/3}|\ln \si|^{-13/6}$ and $\tau_{\caS(h^*)} \leq t^*$, then
\[
q_{\tau_{\caS(h^*)}} = \tq_{\tau_{\caS(h^*)}}\left(1+\caO\left(\frac{\ve^{3/4} |\ln \si|^{13/8}}{\si}\right)\right)\;.
\]
\end{lemma}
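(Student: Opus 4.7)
The plan is to couple $q_t$ and $\tq_t$ by driving both with the same Brownian motion $W$, so that the difference $d_t := q_t - \tq_t$ satisfies the deterministic (but random-coefficient) ODE
\[
\dot d_t = \frac{1}{\ve}\bigl(L(t)\,d_t + \ve\bigr)\;, \quad L(t) := t - q_t^2 - q_t\tq_t - \tq_t^2\;, \quad d_{-T} = x\;.
\]
The structural fact driving all the estimates is that in the slow-pulling regime $\sqrt{\ve} \gg h^*$, so whenever both processes lie in the strip $\caB(h^*)\cup \caS(h^*)$ the quadratic correction in $L$ satisfies $q_t^2 + q_t\tq_t + \tq_t^2 = \caO\bigl((h^*)^2/(|t|\vee\sqrt{\ve})\bigr) = o(|t|)$ for $|t|\gtrsim\sqrt{\ve}$; hence $L(t) = t(1+o(1))$ in that range. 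First I would verify via a bootstrap argument that $q_t$ stays in the enlarged strip $\caB(2h^*)\cup\caS(2h^*)$: assuming confinement, the bounds on $d_t$ derived below force $|q_t-\tq_t|\ll h^*/\sqrt{t}$, which in turn justifies the confinement. Combined with Step One applied to $\tq_t$, both processes then remain in $\caS(2h^*)$ throughout $[\sqrt{\ve},\tau_{\caS(h^*)}]$.

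Duhamel's formula gives
\[
d_t = x\,\e{\Lambda(t,-T)/\ve} + \int_{-T}^t \e{\Lambda(t,s)/\ve}\,\dd s\;, \quad \Lambda(t,s)=\int_s^t L(u)\,\dd u\;,
\]
and the initial-condition contribution is super-exponentially small as $T\to\infty$. Using $\Lambda(t,s)\approx(t^2-s^2)/2$ together with a Laplace estimate of $\int_{-T}^t \e{-s^2/(2\ve)}\,\dd s$, one finds $|d_t| = \caO(\ve/|t|)$ for $t\leq-\sqrt{\ve}$, $|d_t| = \caO(\sqrt{\ve})$ for $|t|\leq\sqrt{\ve}$, and $|d_t|=\caO\bigl(\sqrt{\ve}\,\e{t^2/(2\ve)}\bigr)$ for $t\geq\sqrt{\ve}$.

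On the event $\tau := \tau_{\caS(h^*)}\leq t^* = \sqrt{2k\ve\ln(h^*/\si)}$, guaranteed with probability tending to one by Step Two for any $k>3$, we have $\e{\tau^2/(2\ve)}\leq(h^*/\si)^k=|\ln\si|^{k/2}$ and therefore $|d_\tau|=\caO\bigl(\sqrt{\ve}\,|\ln\si|^{k/2}\bigr)$. Since $|\tq_\tau| = h^*/\sqrt{\tau}$ by definition of $\tau_{\caS(h^*)}$ and $\sqrt{\tau}\leq\sqrt{t^*}=\caO\bigl((\ve|\ln\si|)^{1/4}\bigr)$, a short computation gives
\[
\frac{|d_\tau|}{|\tq_\tau|} = \caO\!\left(\frac{\ve^{3/4}\,|\ln\si|^{(k-1)/2}(\ln|\ln\si|)^{1/4}}{\si}\right)\;,
\]
and any choice $3<k\leq 17/4$ (say $k=4$) yields the claimed relative error $\caO\bigl(\ve^{3/4}|\ln\si|^{13/8}/\si\bigr)$.

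The main obstacle is the tight calibration of $k$: it must be large enough that the Step~Two probability bound vanishes, yet small enough that the exponential growth factor $\e{(t^*)^2/(2\ve)}$ does not consume the $|\ln\si|^{13/8}$ margin in the statement. A secondary difficulty is the bootstrap confinement of $q_t$ to $\caS(2h^*)$, since a premature growth of $d_t$ would spoil the control on $L(t)$ and create circularity in the estimates.
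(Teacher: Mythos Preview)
Your approach is correct and reaches the same numerical bound as the paper, but the route differs and you make it harder for yourself than necessary.

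The paper does not analyse the ODE for $d_t=q_t-\tq_t$ directly. Instead it proves a pathwise comparison lemma (Lemma~\ref{lem:comparisons}): introducing the auxiliary process $\hat q_t$ with drift $\frac{1}{\ve}(t\hat q_t-\hat q_t^3)$ and initial condition $x$, monotonicity of $x\mapsto x^3$ and the standard one-dimensional comparison principle yield, for every path,
\[
|q_t-\tq_t| \leq |x|\,\e{(t^2-T^2)/2\ve} + \int_{-T}^t \e{(t^2-s^2)/2\ve}\,\dd s\;.
\]
From there the proof is exactly the final arithmetic you perform: bounding the integral by $C\sqrt{\ve}\,(h^*/\si)^4 \asymp \sqrt{\ve}\,|\ln\si|^2$ on $[\sqrt{\ve},t^*]$ and dividing by $|\tq_{\tau}|=h^*/\sqrt{\tau}$.

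Your ODE route gives the same inequality once you notice that $q_t^2+q_t\tq_t+\tq_t^2 \geq 0$ \emph{unconditionally}, so $L(t)\leq t$ and hence $\Lambda(t,s)\leq (t^2-s^2)/2$ for every path, with no assumption on where $q_t$ or $\tq_t$ lie. This immediately yields the displayed bound on $|d_t|$; the two-sided estimate $L(t)=t(1+o(1))$ and the bootstrap confinement of $q_t$ in $\caS(2h^*)$ are therefore unnecessary complications. The ``secondary difficulty'' you flag simply disappears. What the paper's comparison argument buys is exactly this: it sidesteps any need to control the nonlinear coefficient, at the cost of introducing $\hat q_t$; your Duhamel formula achieves the same thing more transparently once you use $L(t)\leq t$ rather than $L(t)\approx t$. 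Your calibration of $k\in(3,17/4]$ is correct and matches the paper's implicit choice $k=4$ together with its cruder bound $\sqrt{t^*}\leq C\ve^{1/4}|\ln\si|^{1/8}$.
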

The proof of this lemma is based on the following simple comparison of $q_t$ and $\tq_t$.
\begin{lemma}\label{lem:comparisons}
Let $q_t$ solve (\ref{eq:SDEintrooverdamped}) with initial condition $q(-T) = x \in \caX$ and let $\tq_t$ solve (\ref{eq:tx}). We have, almost surely, for all $t \geq -T$,
\begin{equation}\label{bounded}
\tq_t + x\e{(t^2-T^2)/2\ve} \leq q_t \leq \tq_t + \int_{-T}^t \e{(t^2-s^2)/2\eps} \, \dd s
\end{equation}
if $x \leq 0$ and
\be\label{bounded2}
\tq_t \leq q_t \leq \tq_t + \int_{-T}^t \e{(t^2-s^2)/2\eps} \, \dd s + x\e{(t^2-T^2)/2\ve}
\ee
if $x \geq 0$.
\end{lemma}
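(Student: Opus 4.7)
The plan is to exploit the fact that $q_t$ and $\tq_t$ are driven by the same Brownian motion, so that their difference $z_t \isdefby q_t - \tq_t$ satisfies a random but pathwise noise-free ODE. Subtracting (\ref{eq:tx}) from (\ref{eq:SDEintrooverdamped}), the $\si\,\dd W_t$ terms cancel and one obtains
\[
\dot z_t \;=\; \frac{1}{\ve}\Bigl(t z_t - (q_t^3 - \tq_t^3)\Bigr) + 1\;,\qquad z_{-T} = x\;.
\]

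The key algebraic step is to factor $q_t^3 - \tq_t^3 = z_t A_t$ with
\[
A_t \isdefby q_t^2 + q_t \tq_t + \tq_t^2 \;=\; \Bigl(q_t + \tfrac{1}{2}\tq_t\Bigr)^2 + \tfrac{3}{4}\tq_t^2 \;\geq\; 0\;,
\]
turning the equation into the scalar linear first-order ODE $\dot z_t = \ve^{-1}(t - A_t) z_t + 1$. Variation of parameters then gives the explicit pathwise representation
\[
z_t \;=\; x\,\Phi(t,-T) + \int_{-T}^t \Phi(t,s)\,\dd s\;,\qquad \Phi(t,s) \isdefby \exp\Bigl(\frac{1}{\ve}\int_s^t (u - A_u)\,\dd u\Bigr)\;,
\]
and the nonnegativity of $A_u$ yields the crucial two-sided bound $0 \leq \Phi(t,s) \leq \e{(t^2-s^2)/2\ve}$, valid for every $-T \leq s \leq t$ and almost every realisation of the Brownian path.

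All four claimed inequalities then fall out by splitting on the sign of $x$. For $x \geq 0$ both $x\,\Phi(t,-T)$ and $\int_{-T}^t \Phi(t,s)\,\dd s$ are nonnegative, which gives $z_t \geq 0$ and hence the lower bound in (\ref{bounded2}); bounding each term above by its Gaussian kernel produces the matching upper bound. For $x \leq 0$ the inequality on $\Phi(t,-T)$ reverses upon multiplication by the nonpositive $x$, so keeping only the first term yields the lower bound in (\ref{bounded}), while dropping the nonpositive contribution $x\,\Phi(t,-T)$ and bounding the integral from above gives the corresponding upper bound. There is no serious obstacle here: once the identity $q^3 - \tq^3 = (q - \tq)(q^2 + q\tq + \tq^2)$ is used to expose the pathwise sign of the quadratic factor, the lemma reduces to the standard integrating-factor solution of a scalar linear ODE.
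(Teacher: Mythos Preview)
Your argument is correct and is in fact cleaner than the paper's. The paper does not subtract the two equations directly; instead it introduces an auxiliary process $\hat q_t$ solving the symmetric equation (\ref{eq:tx}) but with initial condition $x$, and then applies the stochastic comparison principle twice: once to get $q_t \geq \hat q_t$ (since $q$ has the extra $+\ve$ in its drift), and once to compare $\hat q_t$ with $\tq_t$ according to the sign of $x$. The cubic term is handled by substituting the lower bound $\hat q_s$ for $q_s$ inside the integral $-\ve^{-1}\!\int \e{(t^2-s^2)/2\ve} q_s^3\,\dd s$ and then recognising the result as the integral representation of $\hat q_t$.

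Your route sidesteps all of this. By writing $q_t^3 - \tq_t^3 = z_t A_t$ with $A_t \geq 0$, you turn the difference $z_t = q_t - \tq_t$ into the solution of a genuine linear first-order ODE with random but continuous coefficient, so the variation-of-constants formula applies pathwise and the bounds $0 < \Phi(t,s) \leq \e{(t^2-s^2)/2\ve}$ deliver all four inequalities at once. This is more elementary---no appeal to a stochastic comparison theorem or to the intermediate process $\hat q$---and it makes the structure of the error terms $x\e{(t^2-T^2)/2\ve}$ and $\int_{-T}^t \e{(t^2-s^2)/2\ve}\,\dd s$ transparent as the homogeneous and particular parts of the linear solution. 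The paper's approach, on the other hand, recycles the comparison machinery already set up in the Appendix, which keeps the toolkit uniform across the section.
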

\begin{proof}
We have
\begin{multline}\label{eq:xtcomparison}
q_t = x\e{(t^2-T^2)/2\ve} + \int_{-T}^t \! \e{(t^2-s^2)/2\ve}\, \dd s\\
 - \frac{1}{\ve}\int_{-T}^t \! \e{(t^2-s^2)/2\ve}q_s^{\,3}\, \dd s + \frac{\si}{\sqrt{\ve}}\int_{-T}^t \! \e{(t^2-s^2)/2\ve}\, \dd W_s\;.
\end{multline}
By the comparison principle, $q_t \geq \hat{q}_t$ almost surely, where $\hat{q}_t$ solves
\[
\dd \hat{q}_t = \frac{1}{\ve}(t\hat{q}_t - \hat{q}_t^3)\, \dd t + \frac{\si}{\sqrt{\ve}}\, \dd W_t\;,\qquad \hat{q}(-T) = x\;.
\]
Using this lower bound for $q$ in the second integral on the right-hand side of (\ref{eq:xtcomparison}) gives
\[
q_t \leq \hat{q}_t + \int_{-T}^t \! \e{(t^2-s^2)/2\ve}\, \dd s\;.
\]
When $x \leq 0$, we have $\hat{q}_t \leq \tq_t$ almost surely, which gives the upper bound in (\ref{bounded}). For the lower bound, we have
\begin{align*}
\hat{q}_t & = x\e{(t^2-T^2)/2\ve} - \frac{1}{\ve}\int_{-T}^t \! \e{(t^2-s^2)/2\ve}\hat{q}_s^{\,3}\, \dd s + \frac{\si}{\sqrt{\ve}}\int_{-T}^t \! \e{(t^2-s^2)/2\ve}\, \dd W_s\\
& \geq x\e{(t^2-T^2)/2\ve} - \frac{1}{\ve}\int_{-T}^t \! \e{(t^2-s^2)/2\ve}\tq_s^{\,3}\, \dd s + \frac{\si}{\sqrt{\ve}}\int_{-T}^t \! \e{(t^2-s^2)/2\ve}\, \dd W_s\\
& = x\e{(t^2-T^2)/2\ve} + \tq_t\;.
\end{align*}
The case $x \geq 0$ is easier and does not involve $\hat{q}_t$. It follows along similar lines.
\end{proof}

\begin{proof}[Proof of Lemma \ref{xtx}]
For all $\sqrt{\ve} \leq t \leq t^*$, we have
\be\label{eq:integralerror}
\int_{-T}^t \e{(t^2-s^2)/2\ve}\, \dd s \leq C_1\sqrt{\ve}\left(\frac{h^*}{\si}\right)^4 \leq C_2\sqrt{\ve}\,|\ln \si|^2
\ee
and for all $x \in \caX$,
\[
|x|\e{(t^2-T^2)/2\ve} \leq C_1\left(\frac{h^*}{\si}\right)^4 \e{-T^2/2\ve} \leq C_2\,|\ln \si|^2 \e{-T^2/2\ve}\;.
\]
Of these two estimates, (\ref{eq:integralerror}) gives the larger upper bound. Next observe that
\[
\tau_{\caS(h^*)} \leq t^* \leq C\ve^{1/2}|\ln \si|^{1/4}\;.
\]
Therefore,
\[
\frac{1}{|\tq_{\tau_{\caS(h^*)}}|} = \frac{\sqrt{\tau_{\caS(h^*)}}}{h^*} \leq C\frac{\ve^{1/4}}{\si |\ln \si|^{3/8}}\;.
\]
Using (\ref{eq:integralerror}) and the above inequality together with Lemma \ref{lem:comparisons} gives the result.
\end{proof}
\subsubsection{Step Three:}
Now we analyse the behaviour of $q_t$, rather than $\tq_t$, for $t > \tau_{\caS(h^*)}$. As we saw in the fast pulling case, if $q_t$ starts at time $t_0 \geq \sqrt{\ve}$ at $q_0 > 0$ with $(q_0,t_0)\in \caK(\kappa)$ then $q_t \geq q_t^{\kappa}$ as long as $0 < q_t < \sqrt{(1-\kappa)t}$, where $q_t^{\kappa}$ was defined in (\ref{eq:xkappa}). Unlike in the previous section, we now have to also consider negative initial conditions. We would like a bound of the form $q_t \leq q_t^{\kappa}$ in such cases, but now the bias of $q_t$ in the positive direction makes this more difficult. In order to obtain a corresponding comparison with $q_t^{\kappa}$, we need an additional assumption on $t_0$ and $q_0$ as set out in the following lemma.
\begin{lemma}\label{lem:q0neg}
Suppose that at time $t_0 \geq \sqrt{\ve}$, $q_t$ starts at $q_0 < 0$, where $(q_0,t_0) \in \caK(\kappa)$ and $|q_0| \gg \eps/t_0$. Then we have $q_t \leq q_t^{\kappa}$ as long as $-\sqrt{(1-\kappa)t} \leq q_t \leq 0$ and $\ve$ is sufficiently small.
\end{lemma}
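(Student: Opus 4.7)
The plan is a direct pathwise comparison between $q_t$ and $q_t^{\kappa}$. Set $d_t = q_t^{\kappa} - q_t$. Because both SDEs are driven by the same Brownian motion with identical noise coefficient $\sigma/\sqrt{\ve}$, the noise cancels in the difference and $d_t$ evolves according to the (random but pathwise) ODE
\[
\dot d_t = \frac{1}{\ve}\bigl[\kappa t\, q_t^{\kappa} - t q_t + q_t^3 - \ve\bigr] = \frac{1}{\ve}\bigl[\kappa t\, d_t - (1-\kappa) t q_t + q_t^3 - \ve\bigr]\;,
\]
with initial value $d_{t_0} = q_0/2 - q_0 = |q_0|/2 > 0$, since $q_0 < 0$.

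The key algebraic input is that in the region $-\sqrt{(1-\kappa)t} \leq q_t \leq 0$ in which the lemma operates, the cubic and linear terms combine favourably:
\[
-(1-\kappa) t q_t + q_t^3 = |q_t|\bigl((1-\kappa)t - q_t^2\bigr) \geq 0\;.
\]
Dropping this nonnegative contribution yields the linear differential inequality $\dot d_t \geq (\kappa t/\ve)\, d_t - 1$, valid pathwise as long as $q_t$ remains in the stated region. Multiplying by the integrating factor $e^{-\kappa t^2/(2\ve)}$ and integrating from $t_0$ to $t$ then produces
\[
d_t \geq e^{\kappa (t^2 - t_0^2)/(2\ve)}\,\frac{|q_0|}{2} - \int_{t_0}^{t} e^{\kappa(t^2-s^2)/(2\ve)}\,\dd s\;.
\]

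A single integration by parts, exploiting the factor $s$ inside the Gaussian exponent together with the hypothesis $t_0 \geq \sqrt{\ve}$, bounds the remaining deterministic integral by $(\ve/(\kappa t_0))\,e^{\kappa(t^2-t_0^2)/(2\ve)}$, so that
\[
d_t \geq e^{\kappa(t^2-t_0^2)/(2\ve)}\left(\frac{|q_0|}{2} - \frac{\ve}{\kappa t_0}\right)\;.
\]
Under the assumption $|q_0| \gg \ve/t_0$, the bracketed quantity is strictly positive for all sufficiently small $\ve$, whence $d_t \geq 0$, i.e.\ $q_t \leq q_t^{\kappa}$, throughout the region under consideration. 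There is no serious analytic obstacle; the content of the argument lies entirely in the algebraic sign observation, which is precisely why the threshold $|q_0| \gg \ve/t_0$ appears: it is exactly what is needed for the head-start $|q_0|/2$ of $d_{t_0}$ to dominate the error generated by the asymmetric $+\ve$ drift in the original equation.
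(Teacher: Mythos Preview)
Your proof is correct and follows essentially the same route as the paper. The paper introduces an intermediate linear process $z_t^{\kappa}$ solving $\dd z_t^{\kappa} = \ve^{-1}(\kappa t z_t^{\kappa} + \ve)\,\dd t + \si\ve^{-1/2}\,\dd W_t$ with $z_{t_0}^{\kappa}=q_0$, uses the same sign observation $(1-\kappa)tq_t - q_t^3 \leq 0$ to get $q_t \leq z_t^{\kappa}$ by comparison, and then solves $z_t^{\kappa}$ explicitly and bounds the deterministic integral $\int_{t_0}^t e^{\kappa(t^2-s^2)/(2\ve)}\,\dd s \leq C(\ve/t_0)e^{\kappa(t^2-t_0^2)/(2\ve)}$ to conclude $z_t^{\kappa} \leq q_t^{\kappa}$; your direct treatment of the difference $d_t$ collapses these two steps into one but uses the identical algebraic and integral ingredients.
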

\begin{proof}
For $-\sqrt{(1-\kappa)t_0} < q_0 < 0$, it is certainly true by comparison of the drift and initial conditions that $q_t$ is bounded above by solutions of
\[
\dd z^{\kappa}_t =  \frac{1}{\eps}( \kappa t z^{\kappa}_t + \eps)\, \dd t + \frac{\si}{\sqrt{\eps}}\,\dd W_t\;,\qquad z^{\kappa}_{t_0} = q_0\;,
\]
as long as $-\sqrt{(1-\kappa)t} \leq q_t \leq 0$. The result follows since
\begin{align*}
z^{\kappa}_t & = q_0\e{\kappa(t^2 - t_0^2)/2\eps} + \int_{t_0}^t \e{\kappa(t^2 - s^2)/2\eps}\dd s + \frac{\si}{\sqrt{\eps}}\int_{t_0}^t \e{\kappa(t^2 - s^2)/2\eps}\dd W_s\\
& \leq (q_0 + C\eps/t_0)\e{\kappa(t^2 - t_0^2)/2\eps} + \frac{\si}{\sqrt{\eps}}\int_{t_0}^t \e{\kappa(t^2 - s^2)/2\eps}\dd W_s\\
& \leq \frac{q_0}{2}\e{\kappa(t^2 - t_0^2)/2\eps} + \frac{\si}{\sqrt{\eps}}\int_{t_0}^t \e{\kappa(t^2 - s^2)/2\eps}\dd W_s\\
& = q_t^{\kappa}\;.
\end{align*}
\end{proof}
This lemma shows that, under suitable conditions, we may compare $q_t$ and $q_t^{\kappa}$ for both postive and negative initial conditions $q_0$. For $q_0 < 0$ we get analogous bounds to those in Lemmas \ref{lem:exitK} and \ref{lem:exitK2}. In the previous section, we applied those lemmas with $t_0 = \sqrt{\ve}$ and $q_0 \asymp \sqrt{\ve}$. We now apply these lemmas with $t_0 = \tau_{\caS(h^*)}$ and $|q_0| \asymp h^*/\sqrt{\tau_{\caS(h^*)}}$ (see Lemma \ref{xtx}). Note that if $\sqrt{\eps} \leq \tau_{\caS(h^*)} \leq t^*$ then $\ve/t_0 \ll |q_0|$ and so the conditions of Lemma \ref{lem:q0neg} are satisfied. We obtain a bound similar to (\ref{eq:exitKupper}) and again see that $\caK(\kappa)$ is left by a time of order $\sqrt{\ve|\ln \si|}$.

\subsubsection{Step Four:}
When $q_t$ exits $\caK(\kappa)$ on the positive side, this part is exactly the same as Step Three from the previous section. The other case when $q_t$ exits $\caK(\kappa)$ on the negative side is similar. Firstly, we introduce $q^*_-(t)$, another real-valued solution of (\ref{eq:x*}) existing for $t \geq \sqrt{\ve}$ and satisfying the bounds $-\sqrt{t} \leq q^*_-(t) \leq -\sqrt{t} + \ve/t$ and $(q_-^*)'(t) < 0$ for all such $t$ and $\ve$ small. Note there is also a third real-valued solution of (\ref{eq:x*}) between $q^*_-$ and $q^*_+$, which is an unstable equilibrium branch. By taking $\ve$ small, we have $q_-^*(t) < -\sqrt{(1-\kappa)t}$ for all $t \geq \sqrt{\ve}$. Again writing $\tau = \tau_{\caK(\kappa)}$, we need to check that the deterministic solution, $q_t^{\deter,\tau}$, of (\ref{eq:xtkappadet}), with initial condition $q^{\deter,\tau}(\tau) = -\sqrt{(1-\kappa)\tau}$, satisfies a bound as in Proposition \ref{prop:xkappa} of the form
\[
0 \leq q_t^{\deter,\tau} - q_-^*(t) \leq C\left(\frac{\ve}{t^{3/2}} + (q^{\deter,\tau}(\tau) - q^*(\tau))\e{-\eta(t^2 - \tau^2)/2\ve}\right)
\]
for all $t \geq \tau$. The main thing to ensure is that $q_t^{\deter,\tau}$, which has a bias in the positive direction, does not re-enter the set $\caK(\kappa)$ after having left. To see that this is indeed the case, first note that the derivative with respect to $t$ of the boundary curve, $-\sqrt{(1-\kappa)t}$, is given by $-\frac{1}{2}t^{-1/2}\sqrt{1-\kappa}$. The derivative of $q_t^{\deter,\tau}$ when on the boundary of $\caK(\kappa)$ is given by $\frac{1}{\ve}t^{3/2}(-\kappa+\ve)\sqrt{1-\kappa}$. Using that $t \geq \sqrt{\ve}$, we see that the inequality
\[
\frac{1}{\ve}t^{3/2}(-\kappa+\ve)\sqrt{1-\kappa} < -\frac{1}{2}t^{-1/2}\sqrt{1-\kappa}
\]
holds when $\kappa > 1/2 + \ve$, which is true for all $\kappa > 1/2$ by taking $\ve$ sufficiently small. Having established that $q_-^*(t) \leq q_t^{\deter,\tau} \leq -\sqrt{(1-\kappa)t}$ for all $t \geq \tau$ and $\ve$ sufficiently small, the rest of the proof follows like Proposition 4.11 from \cite{BG02}. The subsequent estimate (\ref{eq:Atau bound}) above showing the concentration of $q_t$ in the set $\caA^{\tau}(h)$ then follows, where $\caA^{\tau}(h)$ was defined in (\ref{eq:Atau}). Finally, we can show as in Step Four from the fast pulling section that $q_t$ stays in a neighbourhood of $-\sqrt{t}$ for all $t \geq t_2$.

\section{The Full Solution}\label{subsec:sample}
We now consider the full equation (\ref{SDEintro}) and show that for sufficiently small mass (large $\beta$), it behaves like the overdamped solution of the previous section. The general strategy is as in Section~\ref{sec:over}, namely we first show that if we start the system
at the origin at time $s \ll -1$, then 
the solution at time $-1$ belongs to a suitable set. This is done in the following two propositions. We then provide a result that 
is uniform over all  solutions starting from the set in question.
The first step is achieved by the following statement:

\begin{proposition}\label{prop:remainboundedfull}
Let $\hat{\caX} = [-\ve^{1-\beta},\ve^{1-\beta}]$, $\hat{\caV} = [-\eps^{-{1 + 3\beta \over 2}},\eps^{-{1 + 3\beta \over 2}}]$, let $T \geq 1$ be a constant, and let $q_t$ solve (\ref{SDEintro}) with $\beta > 2$. Then, we have
\begin{equ}
\lim_{\sigma, \eps \to 0} \liminf_{s \to -\infty} \bbP^{s} \bigl(q_{-2T} \in \hat{\caX}\;,\; p_{-2T} \in \hat{\caV}\bigr) = 1\;.
\end{equ}
\end{proposition}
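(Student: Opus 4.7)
The strategy is to adapt the It\^o moment bound of Proposition~\ref{prop:remainbounded} to the hypoelliptic second-order equation~(\ref{SDEintro}). Since the noise enters only through $p$, a direct application of It\^o to $q^2$ yields no dissipation, so one combines the natural mechanical energy with a small cross term $qp$, in the spirit of kinetic Fokker--Planck theory. A suitable Lyapunov function is
\[
\Phi(q,p,t) \isdefby \frac{\ve^\beta p^2}{2} + V(q,t) - q + \tfrac{1}{2} \ve^\beta qp\,,\qquad V(q,t) \isdefby -\frac{tq^2}{2\ve} + \frac{q^4}{4\ve}\,,
\]
whose first three summands form the mechanical energy (their $q$-gradient equals $-F$, where $F$ is the drift of (\ref{SDEintro})), and whose cross term breaks the hypoelliptic degeneracy.

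A direct generator computation, using $\partial_tV=-q^2/(2\ve)$, gives
\[
(\caL+\partial_t)\Phi = \bigl(\tfrac{\ve^\beta}{2}-1\bigr)p^2 - \tfrac{1}{2} qp + \tfrac{1}{2} qF(q,t) - \frac{q^2}{2\ve} + \frac{\sigma^2}{2\ve^{\beta+1}}\,.
\]
For $t\leq-2T$ the cubic $\tfrac{1}{2} qF = \frac{1}{2\ve}(tq^2 - q^4 + \ve q)$ is confining, and two applications of Young's inequality (to the indefinite $qp$ and $q$ summands) yield
\[
(\caL+\partial_t)\Phi \leq -\tfrac{p^2}{2} - \frac{Tq^2}{\ve} - \frac{q^4}{2\ve} + C_0\,,\qquad C_0 \isdefby \tfrac{\ve}{2}+\frac{\sigma^2}{2\ve^{\beta+1}}\,,
\]
uniformly on $\{t\leq-2T\}$. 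Further applications of Young give matching bounds $\frac{\ve^\beta p^2}{4} + \frac{Tq^2}{2\ve} + \frac{q^4}{4\ve} \leq \Phi+\tfrac{1}{2} \leq \frac{2|t|}{T}\bigl(\frac{\ve^\beta p^2}{4}+\frac{Tq^2}{2\ve}+\frac{q^4}{4\ve}\bigr)+\tfrac{1}{2}$ on the same set, which recast the differential inequality in the Lyapunov form $(\caL+\partial_t)\Phi \leq -\tfrac{T}{|t|}\Phi + C_0+\tfrac{1}{2}$.

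Gr\"onwall's inequality with the vanishing initial datum $\Phi_s = V(0,s) = 0$ (integrating factor $|t|^{-T}$) then delivers $\bbE^s\Phi_{-2T} \lesssim 1 + \sigma^2\ve^{-\beta-1}$, uniformly as $s\to-\infty$, provided $T\geq2$. The lower bound on $\Phi$ converts this into $\bbE^s[q_{-2T}^2]\lesssim\ve+\sigma^2\ve^{-\beta}$ and $\bbE^s[p_{-2T}^2]\lesssim\ve^{-\beta}+\sigma^2\ve^{-2\beta-1}$, so Chebyshev yields
\[
\bbP^s(q_{-2T}\notin\hat{\caX})\lesssim\ve^{2\beta-1}+\sigma^2\ve^{\beta-2}\,,\qquad \bbP^s(p_{-2T}\notin\hat{\caV})\lesssim\ve^{1+2\beta}+\sigma^2\ve^{\beta}\,,
\]
each vanishing as $\ve,\sigma\to 0$ precisely because $\beta>2$ makes every exponent strictly positive. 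The key technical subtlety is the polynomial Gr\"onwall step: because the coefficient $-t/(2\ve)$ in $V$ grows linearly in $|t|$, the effective decay rate is only $T/|t|$ rather than a constant, but the integrating factor $|t|^{-T}$ is integrable at $-\infty$ for $T>1$ and produces a uniform-in-$s$ limit.
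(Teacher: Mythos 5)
Your strategy is the same as the paper's: build a Lyapunov function from the mechanical energy $\tfrac{\eps^\beta}{2}p^2 + V(q,t) - q$ plus a hypocoercivity cross term, apply It\^o, run Gr\"onwall from the vanishing initial datum at $t=s$, and finish with Chebyshev. The one real difference is the cross term: you use $\tfrac12\eps^\beta pq$, while the paper uses $\tfrac12 pq$. Both are legitimate, and the trade-off is instructive. Your smaller cross term costs only $\tfrac{\eps^\beta}{4}q^2$ under Young, which is swallowed by $\tfrac{T}{\eps}q^2$, so the lower bound $\Phi+\tfrac12 \geq \tfrac{\eps^\beta}{4}p^2 + \tfrac{T}{2\eps}q^2 + \tfrac{1}{4\eps}q^4$ is clean and second-moment Chebyshev immediately closes the argument. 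The paper's larger cross term costs $\tfrac{1}{4\eps^\beta}q^2$, which must be absorbed into the $q^4/\eps$ term at a price of $-\tfrac18\eps^{1-2\beta}$; this makes the displayed bound $\bbE q^2 \lesssim \eps^{2-2\beta}+\ldots$ too weak for a second-moment Chebyshev against $\hat\caX=[-\eps^{1-\beta},\eps^{1-\beta}]$ (the ratio is $\Theta(1)$), and one must retain the quartic term dropped in the paper's last display and instead use the fourth-moment Markov bound on $q^4$. So your moment bookkeeping is actually tighter.

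There is, however, a genuine (if small) gap in your Gr\"onwall step, and it is self-inflicted. By weakening $\tfrac{tq^2}{2\eps}$ to $-\tfrac{T}{\eps}q^2$ on the dissipation side while keeping $-\tfrac{t}{2\eps}q^2\sim\tfrac{|t|}{2\eps}q^2$ in the storage, you manufacture the $t$-dependent rate $T/|t|$, which forces the integrating factor $|t|^{-T}$ and hence the restriction $T>1$ (you write $T\geq2$) for the integral $\int_{-\infty}^{-2T}|t|^{-T}\,\dd t$ to converge. The proposition is stated for $T\geq1$, so $T=1$ is not covered by your argument as written, and your closing remark that ``the effective decay rate is only $T/|t|$ rather than a constant'' is not a feature of the problem but of this accounting choice. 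If you instead keep the $|t|$-dependence on both sides, the rate is constant: a direct check shows
\begin{equation*}
(\caL+\partial_t)\Phi + \tfrac12\Phi
= \bigl(\tfrac{3\eps^\beta}{4}-1\bigr)p^2 + \bigl(\tfrac{\eps^\beta}{4}-\tfrac12\bigr)pq + \bigl(\tfrac{t}{4\eps}-\tfrac{1}{2\eps}\bigr)q^2 - \tfrac{3}{8\eps}q^4 + \tfrac{\si^2}{2\eps^{\beta+1}}\,,
\end{equation*}
and for $t\leq -2$ and $\eps$ small, one application of Young on the $pq$ term bounds the right-hand side by $\si^2/(2\eps^{\beta+1})$. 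This gives $\bbE\Phi_{-2T}\leq\si^2\eps^{-\beta-1}$ uniformly in $s$ with the ordinary exponential Gr\"onwall, valid for every $T\geq1$, and the Chebyshev step you carried out then finishes the proof.
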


\begin{proof}
We fix some arbitrary starting time $s < -2T$ and we consider the solution to (\ref{SDEintro}) with initial condition $q_s = p_s = 0$. We define the function $\Psi(p,q,t)$ by
\begin{equ}
\Psi(p,q,t) = {\eps^\beta \over 2}p^2 - {t \over 2\eps} q^2 + {1 \over 4\eps}q^4 - q + {1 \over 2} pq\;.
\end{equ}
Applying It\^o's formula to $\Psi(p_t, q_t, t)$, we obtain
\[
\dd \Psi(p_t,q_t,t) \leq \left(-\ve^{-\beta}\Psi(p_t, q_t, t) + \frac{1}{2 }\si^2\ve^{-1-\beta} - \frac{1}{2\ve}q_t^2 - \frac{1}{2\ve^{\beta}}q_t \right)\, \dd t + \dd M(t)\; ,
\]
where $M$ is some continuous martingale. Using $\ve^{\beta-1}q^2+q \geq -(1/4)\ve^{1-\beta}$, we see that
\begin{equ}
{\dd \over \dd t} \bbE \Psi(p_t,q_t,t) \leq - \eps^{-\beta} \bbE \Psi(p_t,q_t,t) + \frac{1}{2}\si^2 \ve^{-1-\beta} + \frac{1}{8}\ve^{1-2\beta} \;.
\end{equ}
It follows immediately that $\bbE \Psi(p_t,q_t,t) \leq \sigma^2/(2\ve) + (1/8)\ve^{1-\beta}$. Since for $t \leq -2$ we have
\begin{equ}
\Psi(p,q,t) \geq {\eps^\beta \over 4}p^2 + {1 \over 4\eps} q^2 + {1 \over 4\eps}q^4 - {1 \over 4\ve^{\beta}}q^2 - \eps
\geq {\eps^\beta \over 4}p^2 + {1 \over 8\eps} q^2 - \eps - \frac{1}{8}\ve^{1-2\beta}\;,
\end{equ}
it follows that
\begin{align*}
\bbE q_t^2 & \leq 8 \eps^2 + \ve^{2-2\beta} + 4 \sigma^2 + \ve^{2-\beta}\;,\\
\bbE p_t^2 & \leq 4 \eps^{1-\beta} + (1/2) \eps^{1 - 3\beta} + 2\si^2 \ve^{-1-\beta} + (1/2)\ve^{1-2\beta}\;.
\end{align*}
The stated result then follows at once from Chebychev's inequality.
\end{proof}

Now we use the previous proposition to restart the process at time $-2T$. We denote by $\bbP^{\hat{x},\hat{v}}$ the law of the solution of (\ref{SDEintro}) starting at time $-2T$ with $q_{-2T} = \hat{x}$, $p_{-2T} = \hat{v}$.
\begin{proposition}\label{prop:remainboundedfull2}
Let $\caX = [-1,1]$, $\caV = [-\eps^{-\beta},\eps^{-\beta}]$, let $T \geq 1$ be a constant, and let $q_t$ solve (\ref{SDEintro}) with $\beta > 2$. Then, we have
\begin{equ}
\lim_{\sigma, \eps \to 0} \inf_{\hat{x} \in \hat{\caX},\hat{v} \in \hat{\caV}} \bbP^{\hat{x},\hat{v}} \bigl(q_{-T} \in \caX\;,\; p_{-T} \in \caV\bigr) = 1\;.
\end{equ}
\end{proposition}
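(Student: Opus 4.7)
The plan is to run the Lyapunov argument of Proposition~\ref{prop:remainboundedfull} almost verbatim, starting this time from an arbitrary initial datum $(\hat x,\hat v)\in\hat{\caX}\times\hat{\caV}$ at time $-2T$ instead of at the origin at $-\infty$. The key observation is that the contraction rate $\ve^{-\beta}$ appearing in the ODE inequality
\[
\frac{d}{dt}\bbE\Psi(p_t,q_t,t) \;\leq\; -\ve^{-\beta}\bbE\Psi(p_t,q_t,t) + \frac{\sigma^2}{2\ve^{1+\beta}} + \frac{1}{8}\ve^{1-2\beta}
\]
(with the same $\Psi$ and the same It\^o computation as in the previous proof, valid for $t\leq -1$) is so fast that the memory of the initial condition is erased super-exponentially over any time window of length at least $1$.

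A term-by-term estimate yields $\Psi_0 := \sup_{(\hat x,\hat v)\in\hat{\caX}\times\hat{\caV}}\Psi(\hat v,\hat x,-2T)\leq C\ve^{-N}$ for some explicit $N=N(\beta)$, the dominant contributions being the kinetic term $(\ve^\beta/2)\hat v^2\leq\ve^{-1-2\beta}/2$ and, for $\beta>2$, the quartic term $\hat x^4/(4\ve)\leq\ve^{3-4\beta}/4$. Integrating the above inequality from $-2T$ to $-T$ then gives
\[
\bbE\Psi(p_{-T},q_{-T},-T) \;\leq\; \e{-T\ve^{-\beta}}\,\Psi_0 + \frac{\sigma^2}{2\ve} + \frac{1}{8}\ve^{1-\beta}\;,
\]
and since $\ve^{-\beta}$ grows faster than any polynomial in $\log(1/\ve)$ as $\ve\to 0$, the prefactor $\e{-T\ve^{-\beta}}$ crushes any polynomial $\ve^{-N}$: we have $\e{-T\ve^{-\beta}}\Psi_0=o(1)$ uniformly in $(\hat x,\hat v)$. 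This reduces us to exactly the stationary bound on $\bbE\Psi$ proved in Proposition~\ref{prop:remainboundedfull}, and from this point the computation proceeds just as in that proof.

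The main obstacle is not the Lyapunov calculation itself, which is a direct rerun of Proposition~\ref{prop:remainboundedfull}, but the check that the resulting Chebyshev bounds are strong enough to land in the \emph{strictly smaller} sets $\caX$ and $\caV$. For the momentum this is automatic: the bound $\bbE p_{-T}^2\lesssim\sigma^2\ve^{-1-\beta}+\ve^{1-3\beta}$ together with $|\caV|\sim\ve^{-\beta}$ yields a Chebyshev probability that vanishes as $\sigma,\ve\to 0$ once $\beta>2$. For the position component, however, the corresponding bound $\bbE q_{-T}^2\lesssim\sigma^2+\ve^{2-2\beta}$ is too weak when $\beta>1$, and one has to rerun the Lyapunov argument with $\Psi$ replaced by a function centred on the instantaneous equilibrium $q_*(t)\asymp\ve/|t|$ of the potential, so as to eliminate the slack $\ve^{1-2\beta}$ that is intrinsic to the present Lyapunov. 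This gives $\bbE(q_{-T}-q_*(-T))^2\lesssim\sigma^2+\ve^2$, hence $\bbE q_{-T}^2\to 0$ and so $\bbP(q_{-T}\in\caX)\to 1$ via Chebyshev, as required.
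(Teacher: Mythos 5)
Your observation that the super-exponential contraction factor $\e{-T\ve^{-\beta}}$ annihilates the (polynomially large) initial Lyapunov value is correct and is also the mechanism behind the paper's proof. However, the remainder of the argument has two genuine gaps. First, the claim that the momentum estimate is ``automatic'' is false: running the Lyapunov of Proposition~\ref{prop:remainboundedfull} verbatim, the lower bound $\Psi \geq \frac{\ve^\beta}{4}p^2 - \ve - \frac{1}{8}\ve^{1-2\beta}$ carries a slack of order $\ve^{1-2\beta}$, so $\bbE p_{-T}^2 \lesssim \sigma^2\ve^{-1-\beta} + \ve^{1-3\beta}$ and Chebychev against $\caV = [-\ve^{-\beta},\ve^{-\beta}]$ yields a probability bound of order $\sigma^2\ve^{\beta-1} + \ve^{1-\beta}$, whose second term \emph{diverges} for $\beta>1$. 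The momentum estimate therefore fails for exactly the same reason as the position estimate.

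Second, the proposed fix — centring $\Psi$ on the instantaneous equilibrium $q_*(t)\asymp\ve/|t|$ — does not attack the actual source of the $\ve^{1-2\beta}$ slack, and so the asserted bound $\bbE(q_{-T}-q_*(-T))^2 \lesssim \sigma^2 + \ve^2$ is not justified. The slack does not originate from the linear $-q$ term (whose contribution is only $\caO(\ve)$ and which centring would indeed remove); it comes from balancing the cross term $\frac{1}{2}pq$ against the kinetic term $\frac{\ve^\beta}{2}p^2$ via Young's inequality, which forces a $\frac{1}{4\ve^\beta}q^2$ term that the quartic $\frac{1}{4\ve}q^4$ can only dominate up to an error $\sim\ve^{1-2\beta}$. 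Shifting $q$ by a quantity of order $\ve$ does not change this balance. The paper instead replaces $\Psi$ wholesale by
\[
\Psi(p,q,t) = \tfrac{1}{4}q^2 + \tfrac{\ve^{\beta}}{2}pq + \tfrac{\ve^{2\beta}}{2}p^2 + \tfrac{\ve^{\beta-1}}{4}q^4\;,
\]
rescaling every $p$-coefficient by an extra factor $\ve^\beta$ so that the cross term $\frac{\ve^\beta}{2}pq$ is exactly compatible with $\frac{\ve^{2\beta}}{2}p^2$ and $\frac{1}{4}q^2$: one then gets the slack-free lower bounds $\Psi\geq q^2/8$ and $\Psi\geq \ve^{2\beta}p^2/4$. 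The contraction rate drops to $1/(2\ve)$, which is still fast enough to erase the initial condition over $[-2T,-T]$, and both Chebychev bounds close. You should adopt this rescaled Lyapunov rather than the centred one.
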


\begin{proof}
Let
\[
\Psi(p,q,t) = \frac{1}{4}q^2 + \frac{\ve^{\beta}}{2}pq + \frac{\ve^{2\beta}}{2}p^2 + \frac{\ve^{\beta-1}}{4}q^4\;.
\]
Applying It\^o's formula to $\Psi(p_t,q_t,t)$, we obtain
\[
\dd \Psi(p_t,q_t,t) = \frac{1}{\ve}\left(-\frac{\ve^{\beta+1}}{2}p_t^2 + \frac{t}{2}q_t^2 - \frac{1}{4}q_t^4 + \frac{\ve}{2}q_t + \ve^{\beta}t p_t q_t + \ve^{\beta+1}p_t + \frac{\si^2}{2} \right)\, \dd t + \dd M(t)\; , 
\]
where $M$ is some continuous martingale. Using that $t \in [-2T,-T]$ and $\beta > 2$, we have for sufficiently small $\ve$,
\[
\dd \Psi(p_t,q_t,t) \leq \frac{1}{\ve}\left( -\frac{1}{2}\Psi(p_t,q_t,t) + 4\ve^{\beta+1} + \frac{\ve}{2} + \frac{\si^2}{2} \right)\, \dd t + \dd M(t)\; .
\]
It follows that $\bbE \Psi(p_t,q_t,t) \leq \e{-(t+2T)/2\ve}\bbE \Psi(p_{-2T},q_{-2T},-2T) + \si^2 + 8\ve^{\beta+1} + \ve$. We can then use the bounds $\Psi(p,q,t) \geq \ve^{2\beta}p^2/4$ and $\Psi(p,q,t) \geq q^2/8$ to obtain the result.

\end{proof}

We would like to use a singular perturbation approach to show that for $t \geq -T$ and suitably large $\beta$, sample paths of $q_t$ can be approximated by those of an overdamped equation starting at $-T$. For this, we need to first consider a process that is similar to $q_t$ but has better regularity. To this end, we introduce the processes $Q$ and $P$ that solve
\be\label{eq:QP}
\begin{split}
\dd Q_t & = P_t \, \dd t\;,\qquad Q_{-T} = P_{-T} = 0\;,\\
\ve^{\beta}\dd P_t & = -P_t \,\dd t + \frac{\si}{\sqrt{\ve}}\,\dd W_t\;.
\end{split}
\ee
We find that
\[
P_t = \si \ve^{-1/2-\beta}\int_{-T}^t \! \e{-(t-s)\ve^{-\beta}}\, \dd W_s
\]
and
\[
Q_t = \frac{\si}{\sqrt{\ve}}W_t - \ve^{\beta}P_t = \si\ve^{-1/2 - \beta}\int_{-T}^t \! \e{-(t-s)\ve^{-\beta}}W(s)\, \dd s\;.
\]
For $\delta > 0$ we now define two events, $E_1$ and $E_2$, by
\begin{align*}
E_1 & = \{|Q_t| > \si \ve^{-1/2 - \delta} \text{ for some }t \in [-T,t_2]\}\;,\\
E_2 & = \{|P_t| > \si \ve^{-1/2 - \beta/2 - \delta} \text{ for some }t \in [-T,t_2]\}\;.
\end{align*}
\begin{lemma}\label{qp}
There exists $C>0$ such that for all $\delta>0$ and all $t_2 > 0$,
\[
\bbP(E_1 \cup E_2 ) \leq C t_2(\e{-\ve^{-\delta/(t_2+T)}} + \e{-\ve^{-2\delta}/2})
\]
holds for $\si ,\,\ve> 0$ sufficiently small.
\end{lemma}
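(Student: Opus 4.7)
The strategy is to split $E_1 \cup E_2 \subseteq E_2 \cup (E_1 \setminus E_2)$ and bound each piece via the explicit Gaussian representations displayed just before the lemma, namely $Q_t = (\sigma/\sqrt{\varepsilon})W_t - \varepsilon^\beta P_t$ and $P_t = \sigma \varepsilon^{-1/2-\beta}\int_{-T}^t e^{-(t-s)\varepsilon^{-\beta}}\,\dd W_s$, both of which are mean-zero Gaussian processes.

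For the piece $E_1 \setminus E_2$: on $E_2^c$ one has the deterministic pointwise bound $\varepsilon^\beta|P_t| \leq \sigma \varepsilon^{\beta/2-1/2-\delta}$, and since $\beta > 0$ this is bounded above by $(1/2)\sigma\varepsilon^{-1/2-\delta}$ for $\varepsilon$ small. Consequently $E_1 \setminus E_2$ forces $\sup_{-T \leq t \leq t_2}|W_t| \geq (1/2)\varepsilon^{-\delta}$, and the reflection principle for Brownian motion yields a Gaussian tail bound of the form $4\exp(-c\varepsilon^{-2\delta}/(t_2+T))$. This matches (up to absorbing constants into $C t_2$) the first exponential in the stated bound.

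For the piece $E_2$: the time change $\tau = \varepsilon^{-\beta}(t+T)$ together with the corresponding Brownian rescaling transforms $P$ into $P_t = \sigma\varepsilon^{-(1+\beta)/2}Y_\tau$, where $Y$ is a standard Ornstein--Uhlenbeck process started at $0$ whose stationary variance equals $1/2$. Hence
\[
\mathbb{P}(E_2) = \mathbb{P}\Bigl(\sup_{0 \leq \tau \leq (t_2+T)\varepsilon^{-\beta}}|Y_\tau| > \varepsilon^{-\delta}\Bigr).
\]
Partitioning this interval into $N = \lceil (t_2+T)\varepsilon^{-\beta}\rceil$ subintervals of unit length, on each piece the value of $Y$ at the left endpoint is Gaussian of variance at most $1/2$, while the oscillation has a subgaussian tail by the standard modulus-of-continuity estimate for OU. A union bound then gives $\mathbb{P}(E_2) \leq C(t_2+T)\varepsilon^{-\beta}\exp(-\varepsilon^{-2\delta})$. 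Since $\varepsilon^{-\beta}$ is subexponential in $\varepsilon^{-2\delta}$, the polynomial prefactor is absorbed by losing a factor $2$ in the Gaussian exponent, producing the second summand $e^{-\varepsilon^{-2\delta}/2}$.

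The only step with any content is the running-maximum estimate for the OU process on the long interval $[0,(t_2+T)\varepsilon^{-\beta}]$; the delicate point is to keep track of the subgaussian constant in the OU modulus of continuity so that the full exponent $\varepsilon^{-2\delta}$ survives the union bound intact, but everything else is routine Gaussian bookkeeping combined with the reflection principle.
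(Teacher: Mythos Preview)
Your proposal is correct and close in spirit to the paper, but the route for $E_1$ is slightly different. The paper does not split into $E_2\cup(E_1\setminus E_2)$; instead it uses the convolution representation
\[
Q_t=\sigma\varepsilon^{-1/2-\beta}\int_{-T}^{t}e^{-(t-s)\varepsilon^{-\beta}}W(s)\,\dd s\;,
\]
which immediately gives $|Q_t|\le \sigma\varepsilon^{-1/2}\sup_{[-T,t_2]}|W|$ (the exponential kernel integrates to at most $\varepsilon^\beta$), so $E_1$ reduces directly to $\{\sup|W|>\varepsilon^{-\delta}\}$ without invoking $E_2^c$. Your decomposition via $Q_t=\sigma\varepsilon^{-1/2}W_t-\varepsilon^\beta P_t$ on $E_2^c$ achieves the same thing with one extra step; your resulting Gaussian bound $\exp(-c\varepsilon^{-2\delta}/(t_2+T))$ is in fact sharper than the stated $\exp(-\varepsilon^{-\delta/(t_2+T)})$ and implies it for $\varepsilon$ small.

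For $E_2$ the paper only records the conclusion
\[
\bbP(E_2)\le C\,t_2\,\varepsilon^{-2\delta-1}\exp\bigl\{-\tfrac12\varepsilon^{-2\delta}\bigr\}
\]
without writing out the argument; your Ornstein--Uhlenbeck rescaling plus a union bound over $\caO((t_2+T)\varepsilon^{-\beta})$ unit subintervals is exactly the kind of standard discretisation that yields this, and absorbing the polynomial prefactor into the exponent at the cost of a factor $2$ is the right way to reach $e^{-\varepsilon^{-2\delta}/2}$. The ``delicate point'' you flag about tracking the sub-Gaussian constant is harmless here: on each unit interval the variance of $Y$ is bounded by $1/2$ and the increment process has a uniform Gaussian tail, so the union bound gives the full $\varepsilon^{-2\delta}$ exponent with only a polynomial loss.
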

\begin{proof}
Since $Q_t = \si \ve^{-1/2-\beta}\int_{-T}^t \! \e{-(t-s)\ve^{-\beta}}W(s)\, \dd s$, it follows that
\begin{align*}
\bbP(E_1) & \leq \bbP\{|W_t| > \ve^{-\delta} \text{ for some }t \in [-T,t_2]\}\\
& \leq C t_2 \e{-\ve^{-\delta/(t_2+T)}}.
\end{align*}
where $C>0$ is independent of $\delta$ and $t_2$. We also find that there exists $C>0$ independent of $\delta$ and $t_2$ such that for $\ve$ sufficiently small,
\[
\bbP(E_2) \leq Ct_2\ve^{-2\delta-1}\exp\left\{-\frac{1}{2}\ve^{-2\delta}\right\}.
\]
The result follows by combining these two estimates.
\end{proof}
Now let $y_t = q_t - Q_t$. It solves, almost surely, the second-order ODE
\[
\ve^{\beta}\ddot y = -\dot y + \frac{1}{\ve}(t(y_t+Q_t)-(y_t+Q_t)^3 +\ve)\;,\quad y(-T) = x\;,\, \dot y(-T) = v\;.
\]
The following proposition shows that for almost all paths in $(E_1 \cup E_2)^c$, $y$ may be approximated by the solution of a first-order ODE. Note that the condition on $\beta$ is a little stronger than necessary, but is required later on in this section.
\begin{proposition}\label{y}
For all $t_2 > 1$, there exists $C = C(t_2)>0$ such that for all $\beta > 2$, all $0 < \delta < \beta/2 - 1$, all $\si^{2/(1 + 2\delta)} \ll \ve \ll 1$ and almost all paths in $(E_1 \cup E_2)^c$,
\[
\left|\dot y - \frac{1}{\ve}(t(y_t+Q_t)-(y_t+Q_t)^3 +\ve) \right| \leq C\max\{\ve^{\beta-2 - \delta},\si \ve^{-3/2 + \beta/2 - 2\delta}\}
\]
for all $t \in [-T + 2\ve^{\beta-\delta},t_2]$ and $\si$ sufficiently small.
\end{proposition}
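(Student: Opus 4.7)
The strategy is a classical singular perturbation / boundary layer analysis: we study the ``residual''
\[
w(t) = \dot y(t) - F(y_t+Q_t,t), \qquad F(q,t) = \tfrac{1}{\ve}\bigl(tq - q^3 + \ve\bigr),
\]
whose smallness is exactly what the proposition asserts. The main heuristic is that $\dot y$ relaxes on the fast time scale $\ve^\beta$ to the slow manifold $\dot y = F(y+Q,t)$; once the initial boundary layer of width $\ve^{\beta-\delta}$ has been crossed, $w$ is governed by an inhomogeneous linear ODE with a large negative coefficient, and so is pinned near the forcing divided by that coefficient.

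\medskip
\noindent\textbf{Step 1: ODE for $w$.} Using $\ve^\beta \ddot y = -\dot y + F(y+Q,t) = -w$ and $\dot Q = P$, one computes
\[
\dot w = -a(t)\, w + R(t),
\]
where
\[
a(t) = \frac{1}{\ve^\beta} + \frac{t - 3(y_t+Q_t)^2}{\ve},
\qquad
R(t) = -\frac{t-3(y_t+Q_t)^2}{\ve}\bigl(F(y_t+Q_t,t) + P_t\bigr) - \frac{y_t+Q_t}{\ve}.
\]
Since $\beta > 2$ and $y+Q$ will be shown to be bounded, $a(t) = \ve^{-\beta}(1+o_\ve(1))$ uniformly on $[-T,t_2]$.

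\medskip
\noindent\textbf{Step 2: A priori control on $y$ (the main obstacle).} Before bounding $w$ we must know that $|y_t|$ (hence $|y_t+Q_t|$, since on good paths $|Q_t| \leq \si \ve^{-1/2-\delta}\ll 1$ by the hypothesis $\si^{2/(1+2\delta)} \ll \ve$) stays bounded by a fixed constant on $[-T,t_2]$. The plan is a bootstrap: introduce $\tau = \inf\{t \geq -T : |y_t|>M\}$ for $M$ large enough that $F(\cdot,t)$ is coercive outside $[-M,M]$; on $[-T,\tau]$ the forcing bounds of Step 5 apply, and one shows that once $w$ has decayed past the boundary layer, $y$ satisfies an overdamped equation with a small perturbation, for which the estimates of Section~\ref{sec:over} prevent escape. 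The only delicate moment is the boundary layer $[-T, -T+2\ve^{\beta-\delta}]$, during which $|\dot y|$ may be as large as $|w(-T)|\lesssim \ve^{-\beta}$; but the length $2\ve^{\beta-\delta}$ of this interval then only allows a displacement of order $\ve^{-\delta}$, which is negligible compared to $M$.

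\medskip
\noindent\textbf{Step 3: Duhamel formula and initial layer.} Variation of constants gives
\[
w(t) = w(-T)\exp\!\Bigl(-\int_{-T}^t a(s)\,\dd s\Bigr) + \int_{-T}^t \exp\!\Bigl(-\int_s^t a(u)\,\dd u\Bigr) R(s)\,\dd s.
\]
The bounds $x\in\caX$, $v\in\caV$ yield $|w(-T)| \leq |v| + |F(x,-T)| \lesssim \ve^{-\beta}$. For $t \geq -T + 2\ve^{\beta-\delta}$, the integrating factor satisfies
\[
\exp\!\Bigl(-\int_{-T}^t a\Bigr) \leq \exp\!\bigl(-(1+o_\ve(1))\cdot 2\ve^{-\delta}\bigr),
\]
so the first term is super-polynomially small and, in particular, dominated by $\ve^{\beta-2-\delta}$.

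\medskip
\noindent\textbf{Step 4: Bounding the forcing integral.} On paths in $(E_1\cup E_2)^c$ and for $|y+Q|\leq M+1$, we have $|F(y+Q,t)|\lesssim \ve^{-1}$ and $|P_t|\leq \si \ve^{-1/2-\beta/2-\delta}$, giving
\[
|R(t)| \lesssim \frac{1}{\ve^2} + \frac{\si}{\ve^{3/2 + \beta/2+\delta}} + \frac{1}{\ve}.
\]
Since $a(t)\asymp \ve^{-\beta}$, the standard bound $\int_{-T}^t e^{-\int_s^t a}\,\dd s \lesssim \ve^\beta$ yields
\[
\Bigl|\int_{-T}^t e^{-\int_s^t a}\, R(s)\,\dd s \Bigr|
\lesssim \ve^{\beta-2} + \si\,\ve^{-3/2+\beta/2-\delta},
\]
and the extra $\ve^{-\delta}$ in the claimed $\ve^{\beta-2-\delta}$ resp.\ $\si\,\ve^{-3/2+\beta/2-2\delta}$ absorbs the $o(1)$ correction in $a(t)$ together with the slow variation of $R(s)$ across one integration length $\ve^\beta$; one writes $R(s) = R(t) + (R(s)-R(t))$ and handles the second piece by estimating $\dot R$.

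\medskip
\noindent\textbf{Step 5: Closing the bootstrap.} The bound just established on $w$ converts into a bound on $\dot y - F(y+Q,t)$, which together with the overdamped estimates of Section~\ref{sec:over} applied to the perturbed equation for $y+Q$ shows $|y_t|\leq M/2$, say, on $[-T+2\ve^{\beta-\delta},t_2]$, thus closing the bootstrap. Combining Steps 3 and 4 gives the asserted estimate.

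The hardest part is really Step 2 and Step 5: carefully tracking how the largeness of $v\in\caV = [-\ve^{-\beta},\ve^{-\beta}]$ is absorbed in the short initial time $[-T,-T+2\ve^{\beta-\delta}]$, and confirming by a bootstrap that $y$ stays bounded so that the forcing bounds are not circular. Everything else is computation with Duhamel and the exponential integrating factor.
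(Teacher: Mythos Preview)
Your overall architecture is sound, and the ODE you derive for $w$ is correct, but there is a genuine error in Step~2 and an unnecessary detour in Step~5.

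\textbf{The boundary-layer estimate is wrong as written.} You bound the displacement of $y$ over $[-T,-T+2\ve^{\beta-\delta}]$ by $\sup|\dot y|\cdot 2\ve^{\beta-\delta}\lesssim \ve^{-\beta}\cdot\ve^{\beta-\delta}=\ve^{-\delta}$ and then call this ``negligible compared to $M$''. But $\ve^{-\delta}\to\infty$, so this does not keep $y$ inside any fixed ball, and your bootstrap does not close. The fix is to observe that the dangerous part of $\dot y$ is the transient $w(-T)\exp(-\!\int_{-T}^t a)$, whose \emph{time integral} over the layer is at most $|w(-T)|/\inf a \lesssim \ve^{-\beta}\cdot\ve^\beta = O(1)$; the remaining contributions to $\dot y$ are $O(\ve^{-1})$, and those times the layer width give $O(\ve^{\beta-1-\delta})=o(1)$. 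So the displacement is $O(1)$, not $O(\ve^{-\delta})$. This is exactly what the paper does (with the explicit formula for $z=\dot y$), obtaining $|y_t-x|\le 1+C\ve^{\beta-1-\delta}$.

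\textbf{Step~5 is overkill and potentially circular.} You do not need the overdamped estimates of Section~\ref{sec:over} to close the bootstrap. Once you know $\dot y = F(y+Q,t)+w$ with $|w|$ small, simply check the sign of $\dot y$ at $|y|=M$: for $M=2\sqrt{t_2}$ one has $g(\tau,y_\tau+Q_\tau)\le -C_1+C_2\sigma\ve^{-1/2-\delta}<0$, so $\dot y<0$ at $y=+M$ (and $>0$ at $y=-M$), preventing exit. This is the paper's argument and it is self-contained.

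\textbf{On the difference in approach.} The paper never forms your ODE for $w$. Instead it writes $z=\dot y$, for which $\ve^\beta\dot z=-z+\tfrac1\ve g(t,y+Q)$ has \emph{constant} damping $\ve^{-\beta}$; the explicit Duhamel formula for $z$ is then expanded by the Laplace method (splitting the integral at $t-\ve^{\beta-\delta}$ and Taylor-expanding $g$). Your route, differentiating once more to get $\dot w=-a(t)w+R$, trades a constant-coefficient equation with a nontrivial forcing for a variable-coefficient equation whose very positivity $a(t)\asymp\ve^{-\beta}$ already presupposes the a~priori bound on $y$. Both work, but the paper's version keeps the bootstrap cleaner: boundedness of $y$ is needed only to control the size of $g$, not the sign of the damping. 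Either way the extra $\ve^{-\delta}$ in the final bound comes from the same place (one derivative of $g$, i.e.\ your $\dot R$ remark), so once Step~2 is repaired your argument goes through.
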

\begin{proof}
If we write $z = \dot y$, then almost surely the pair $(y,z)$, which are differentiable, solve
\begin{align*}
\dot y & = z \;, \\
\ve^{\beta}\dot z & = -z  + \frac{1}{\ve}g(t,y_t+Q_t)\;,
\end{align*}
where $g(t,y_t+Q_t) = t(y_t+Q_t)-(y_t+Q_t)^3 +\ve$. For $t>0$, we have $g(t,\sqrt{t}) \approx 0$ so that we do not expect $y_t + Q_t$, or indeed $y_t$, to be much larger than $\sqrt{t}$. Therefore, we let $\tau = \inf\{t \geq -T: |y_t| >2\sqrt{t_2}\}$. On $(E_1 \cup E_2)^c$, there is $C>0$ depending on $t_2$ such that for all $-T \leq t \leq \tau \wedge t_2$, $|g(t,y_t+Q_t)| < C$. We solve the equation for $z$ to give
\be\label{eq:zt}
z_t = v\e{-(t+T)\ve^{-\beta}} + \ve^{-(1+\beta)}\int_{-T}^t \! \e{-(t-s)\ve^{-\beta}}g(s,y_s+Q_s)\,\dd s
\ee
almost surely, from which we deduce that $|z_t| \leq \ve^{-\beta}\e{-(t+T)\ve^{-\beta}} + C/\ve$ for all $t \leq \tau \wedge t_2$.  This immediately shows that for $-T \leq t \leq (-T + 2\ve^{\beta-\delta}) \wedge \tau$ and sufficiently small $\ve$,
\be\label{eq:yinitial}
|y_t - x| \leq 1 + C\ve^{\beta - \delta - 1}
\ee
and so $\tau > -T + 2\ve^{\beta-\delta}$.

For $-T + \ve^{\beta-\delta} \leq t \leq \tau \wedge t_2$, we have $|z_t| < C/\ve$. Furthermore, for such $t$ we find
\[
 \left|\frac{\dd }{\dd t}g(t,y_t+Q_t)\right| \leq C \max\{\ve^{-1},\si \ve^{-1/2 - \beta/2 - \delta}\}\;.
\]
Now we apply the Laplace method to the integral in (\ref{eq:zt}). For $t \geq -T + 2\ve^{\beta-\delta}$, decompose the integral as
\[
\int_{-T}^t =\int_{-T}^{t-\ve^{\beta-\delta}} + \int_{t-\ve^{\beta-\delta}}^t.
\]
Then, by the boundedness of $g$, we have
\[
\left|\int_{-T}^{t-\ve^{\beta-\delta}} \! \e{-(t-s)\ve^{-\beta}}g(s,y_s+Q_s)\,\dd s\right| < C \e{-\ve^{-\delta}}.
\]
For the remaining integral, we use a Taylor expansion of $g$ to give
\[
g(s,y_s+Q_s) \leq g(t,y_t+Q_t) + C(t-s)\max\{\ve^{-1},\si \ve^{-1/2 - \beta/2 - \delta}\}\;.
\]
Then
\begin{multline*}
\int_{t-\ve^{\beta-\delta}}^t \! \e{-(t-s)\ve^{-\beta}}g(s,y_s+Q_s)\,\dd s \leq \ve^{\beta}g(t,y_t+Q_t)+ \\+ C\ve^{\beta}\e{-\ve^{-\delta}} + C\max\{\ve^{2\beta-1 - \delta},\si \ve^{-1/2 + 3\beta/2 - 2\delta}\}\;,
\end{multline*}
which tells us that for $-T + 2\ve^{\beta-\delta} \leq t \leq \tau \wedge t_2$ and $\si$ sufficiently small,
\be\label{eq:z UB}
z_t \leq \frac{1}{\ve}g(t, y_t+Q_t) + C\max\{\ve^{\beta-2 - \delta},\si \ve^{-3/2 + \beta/2 - 2\delta}\}\;.
\ee
In a similar way, we can also show that
\be\label{eq:z LB}
z_t \geq \frac{1}{\ve}g(t, y_t+Q_t) - C\max\{\ve^{\beta-2 - \delta},\si \ve^{-3/2 + \beta/2 - 2\delta}\}\;.
\ee
We will now show that the assumption $\tau \leq t_2$ leads to a contradiction. For this, we will show that if $y_{\tau} = +2\sqrt{t_2}$, then the right-hand side of (\ref{eq:z UB}) is strictly negative, whereas we should have $z_{\tau} \geq 0$ by continuity. The case $y_{\tau} = -2\sqrt{t_2}$ is similar. First, we note that if $y_{\tau} = +2\sqrt{t_2}$, then there are constants $C_1,\, C_2 >0$ depending on $t_2$ such that
\[
g(\tau, y_{\tau} + Q_{\tau}) \leq -C_1 + C_2\si \ve^{-1/2 - \delta}
\]
and for $\si$ sufficiently small, the right-hand side is strictly negative and bounded away from zero. Then the conditions on $\ve$, $\beta$ and $\delta$ guarantee that the right-hand side of (\ref{eq:z UB}) is strictly negative. This means that we must have $\tau > t_2$ and so (\ref{eq:z UB}) and (\ref{eq:z LB}) hold for all $-T + 2\ve^{\beta-\delta} \leq t \leq t_2$, from which the result follows.
\end{proof}
Now we will use Proposition \ref{y} to tell us something about the SDE (\ref{SDEintro}).
\begin{proposition}\label{th}
For all $t_2 > 1$, there exists $C = C(t_2)>0$ such that for all $\beta > 2$, all $0 < \delta < \beta/2 - 1$, all $\si^{2/(1 + 2\delta)} \ll \ve \ll 1$ and almost all paths in $(E_1 \cup E_2)^c$,
\[
q^-_t \leq q_t + \ve^{\beta}P_t \leq q^+_t
\]
for all $t \in [-T + 2\ve^{\beta - \delta},t_2]$ and $\si$ sufficiently small, where
\be\label{eq:qpm}
\dd q^{\pm}_t = \frac{1}{\ve}(tq^{\pm}_t - (q^{\pm}_t)^3 + \ve(1\pm r(\si)))\, \dd t + \frac{\si}{\sqrt{\ve}}\, \dd W_t\;,\quad q^{\pm}(-T+2\ve^{\beta-\delta}) = x \pm 3\;,
\ee
with $W_t$ the same Brownian motion appearing in (\ref{SDEintro}) and
\[
 r(\si) = C\max\{ \ve^{\beta-2 - \delta},\si \ve^{-3/2 + \beta/2 - 2\delta}\}\;.
\]
\end{proposition}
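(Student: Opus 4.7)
The plan is to introduce the auxiliary process $X_t \isdefby q_t + \ve^{\beta}P_t$ and verify that, on the complement of $E_1 \cup E_2$, it satisfies an SDE that differs from (\ref{eq:qpm}) only through a drift perturbation of size at most $r(\si)$. Once this is established, the pathwise comparison principle for SDEs driven by the same Brownian motion yields $q_t^- \leq X_t \leq q_t^+$, which is exactly the statement.

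The key algebraic observation is that, since $\dot q_t = p_t$ and $\dot Q_t = P_t$, one has $p_t - P_t = \dot y_t$ and hence
\[
\dd X_t = \dd q_t + \ve^{\beta}\dd P_t = (p_t - P_t)\,\dd t + \frac{\si}{\sqrt{\ve}}\,\dd W_t = \dot y_t\,\dd t + \frac{\si}{\sqrt{\ve}}\,\dd W_t\;.
\]
Applying Proposition \ref{y} to replace $\dot y_t$ by $\frac{1}{\ve}g(t, y_t + Q_t) = \frac{1}{\ve}g(t, q_t)$, where $g(t, u) \isdefby tu - u^3 + \ve$, produces an error of size $\caO(\max\{\ve^{\beta-2-\delta}, \si\ve^{-3/2+\beta/2-2\delta}\})$ on $(E_1 \cup E_2)^c$. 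I would then replace $g(t, q_t)$ by $g(t, X_t)$ using a mean-value estimate: the stopping argument inside the proof of Proposition \ref{y} guarantees that $|q_t|$ and $|X_t|$ are uniformly bounded over $[t_0, t_2]$ (with $t_0 \isdefby -T + 2\ve^{\beta-\delta}$), so
\[
|g(t, q_t) - g(t, X_t)| \leq C(t_2)\,\ve^{\beta}|P_t| \leq C(t_2)\,\si\ve^{\beta/2 - 1/2 - \delta}
\]
on $E_2^c$. Dividing by $\ve$ gives an additional drift error of order $\si\ve^{\beta/2 - 3/2 - \delta}$, which is absorbed into the previously identified $\si\ve^{-3/2+\beta/2-2\delta}$ since $\delta > 0$. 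Rewriting the resulting drift as $\frac{1}{\ve}(tX_t - X_t^3 + \ve(1 + \tilde E(t)))$ with $|\tilde E(t)| \leq r(\si)$ exhibits $X_t$ as being drift-wise sandwiched between the equations defining $q_t^+$ and $q_t^-$.

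For the initial condition at $t_0$, I would invoke (\ref{eq:yinitial}) to bound $|y_{t_0} - x| \leq 2$ for small $\ve$, while the definitions of $E_1$ and $E_2$ give $|Q_{t_0}| + \ve^{\beta}|P_{t_0}| \leq \si\ve^{-1/2-\delta} + \si\ve^{\beta/2 - 1/2 - \delta} = o(1)$ under the assumption $\si^{2/(1+2\delta)} \ll \ve$. Hence $X_{t_0} \in [x - 3, x + 3] = [q^-_{t_0}, q^+_{t_0}]$. The comparison principle then applies: the difference $D^+_t \isdefby X_t - q^+_t$ satisfies a random ODE (the noise cancels since both processes are driven by the same $W_t$) whose drift is Lipschitz in space on the relevant bounded region, with the additional term $\tilde E(t) - r(\si) \leq 0$. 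A standard Gronwall argument on any interval along which $D^+ > 0$ produces a contradiction, so $X_t \leq q^+_t$ on $[t_0, t_2]$; the lower bound is symmetric.

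The main obstacle is the careful bookkeeping needed to ensure that the two distinct error contributions, one from the singular perturbation supplied by Proposition \ref{y} and one from the $\ve^{\beta}P_t$ correction, both fit inside the single expression $r(\si)$ appearing in the statement; this also requires verifying that the uniform bounds on $|q_t|$ and $|X_t|$ controlling the Lipschitz constant of the cubic term remain valid throughout $[t_0, t_2]$, which follows from the localization already built into the proof of Proposition \ref{y}.
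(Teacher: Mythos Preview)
Your approach is correct and reaches the same conclusion, but it is organised differently from the paper's proof. The paper does not work directly with $X_t = q_t + \ve^{\beta}P_t$. Instead, it first passes through an \emph{intermediate ODE solution} $y_t^+$ satisfying
\[
\dot y_t^+ = \tfrac{1}{\ve}g(t, y_t^+ + Q_t) + C\max\{\ve^{\beta-2-\delta},\,\si\ve^{-3/2+\beta/2-2\delta}\},\qquad y_{t_0}^+ = x+2,
\]
so that the inequality $\dot y_t \leq \dots$ coming from Proposition~\ref{y} becomes an equality for $y_t^+$ (with $y_t \leq y_t^+$ by ODE comparison). Only then does it form $\eta_t = y_t^+ + Q_t + \ve^{\beta}P_t$, whose drift is now a genuine \emph{function} $f(t,\eta_t,P_t,\ve)$, so that the appendix comparison Lemma~\ref{comp} (stated for drifts of the form $a(s,X(s),Z(s))$ with a single auxiliary process $Z$) applies verbatim. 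You instead keep the error $E(t) = \dot y_t - \tfrac{1}{\ve}g(t,q_t)$ as part of the path, cancel the noise in $D^+_t = X_t - q_t^+$, and run a direct Gronwall argument on the resulting random ODE. This is more economical---it skips the introduction of $y_t^+$ entirely---but it means you are effectively reproving a version of Lemma~\ref{comp} rather than invoking it; in particular, the strict inequality $a<c$ built into that lemma is what cleanly rules out $D^+$ crossing zero, whereas your non-strict bound $\tilde E(t) \leq r(\si)$ together with the strict initial separation $X_{t_0} < x+3$ does the same job through the Gronwall step. Either route is fine; the paper's buys a cleaner fit with the stated comparison lemma, yours buys one fewer auxiliary process.
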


\begin{proof}[Proof of Theorem \ref{th}]
We will show the upper bound. The lower bound is similar. Letting $t_0 = -T + 2\ve^{\beta - \delta}$, we know by Proposition \ref{y} that there exists $C>0$ and a process $y_t^+$ solving
\[
\dot y_t^+ = \frac{1}{\ve}g(t,y_t^++Q_t) + C\max\{ \ve^{\beta-2 - \delta},\si \ve^{-3/2 + \beta/2 - 2\delta}\}\;,\quad  y^+_{t_0} = x + 2\;,
\]
such that $y_t \leq y_t^+$ for all $t \in [t_0,t_2]$, where $y_t = q_t - Q_t$. Note the initial condition for $y_t^+$ is chosen using (\ref{eq:yinitial}). Therefore, $q_t \leq y_t^+ + Q_t$. In the same way as in Proposition \ref{y}, we can show $|y_t^+| < 4\sqrt{t_2}$ for all $t_0 \leq t \leq t_2$ by considering the sign of $\dot y_t^+$.

Now define $\eta_t \isdefby y_t^+ + Q_t + \ve^{\beta}P_t$ and note that for all $t_0 \leq t \leq t_2$, $|\eta_t| < C$ for some constant $C>0$ depending on $t_2$. It solves
\begin{equation}\label{eta}
\dd \eta_t = \frac{1}{\ve}(t (\eta_t - \ve^{\beta}P_t) - (\eta_t - \ve^{\beta}P_t)^3 + \ve + C\max\{ \ve^{\beta-1 - \delta},\si \ve^{-1/2 + \beta/2 - 2\delta}\})\, \dd t + \frac{\si}{\sqrt{\ve}}\, \dd W_t\;.
\end{equation}
with initial position $\eta_{t_0} \leq x+3$.

Denote the drift term above by $f(t,\eta_t,P_t,\ve)$. We will now show that $f$ is bounded in such a way that allows us to use a comparison principle. As we are working on $(E_1 \cup E_2)^c$, we know that $|\ve^{\beta}P_t| \leq \si \ve^{-1/2 +\beta/2 - \delta}$ for all $t \in [t_0,t_2]$ by definition. Note also that by the conditions on $\delta$ and $\ve$, $\max\{ \ve^{\beta-1 - \delta},\si \ve^{-1/2 + \beta/2 - 2\delta}\} \ll \ve$. We then have
\[
f(t,\eta_t,P_t,\ve) \leq \frac{1}{\ve}(t\eta_t - \eta_t^3 + \ve(1+r(\si)))\;,
\]
where $r(\si) = C\max\{ \ve^{\beta-2 - \delta},\si \ve^{-3/2 + \beta/2 - 2\delta}\}$ for some constant $C>0$ depending on $t_2$. Let $q_t^+$ be the solution of
\[
\dd q_t^+ = \frac{1}{\ve}(tq_t^+ - (q_t^+)^3 + \ve(1+r(\si)))\, \dd t + \frac{\si}{\sqrt{\ve}}\, \dd W_t \;,\quad q^+_{t_0} = x+3\;.
\]
By Lemma \ref{comp} below, $\eta_t \leq q_t^+$ for all $t \in [t_0,t_2]$ and almost all paths in $(E_1 \cup E_2)^c$. Therefore, $q_t \leq \eta_t - \ve^{\beta}P_t \leq q_t^+ - \ve^{\beta}P_t$, which gives the upper bound.
\end{proof}
This proposition allows us to complete the proof of Theorem \ref{thm:mass sample paths}.
\begin{proof}[Proof of Theorem \ref{thm:mass sample paths}]
Firstly, let us consider the fast pulling case. That is, $\si^{4/3}|\ln \si|^{2/3} \ll \ve \ll 1$. In this case, when $\beta > 2$ the term $1-r(\si)$ appearing in (\ref{eq:qpm}) is strictly positive and bounded away from zero for all $\si$ sufficiently small. Then the analysis of Section \ref{sec:fast} can be applied to $q^-_t$ (the slightly different drift term does not matter). By Lemma \ref{qp}, we can assume $(E_1 \cup E_2)^c$ to hold, in which case $|\ve^{\beta}P_t| \leq \si \ve^{-1/2 +\beta/2 - \delta}$ for all $t \in [-T + 2\ve^{\beta-\delta},t_2]$. Therefore, we find that $q^-_t - \ve^{\beta}P_t > \gamma \sqrt{t}$ for all $c_1 \sqrt{\ve|\ln \si|} \leq t \leq t_2$, where $c_1,\,\gamma > 0$ are suitably chosen constants.

For the slow pulling case, let $0 < \delta < \beta/2-1$ and $\si^{2/(1+2\delta)} \ll \ve \ll \si^{4/3}|\ln \si|^{-13/6}$. Then again $1 \pm r(\si)$ is positive and bounded away from zero for $\si$ small and the analysis in Section \ref{sec:slow} applies to $q^+_t$ and $q^-_t$. Note that, in the limit, $q^-_t$ and $q^+_t$ must ``go the same way'' by comparison of their drift terms and initial positions and we know by Proposition~\ref{prop:over} that the probabilities are $1/2$ in either direction.  As above, the term $\ve^{\beta}P_t$ does not change anything. Therefore, $q_t$ behaves in the same way as $q^-_t$ and $q^+_t$.
\end{proof}

\appendix
\section{A comparison principle for SDE}
We present a lemma that is a slightly modified version of Theorems 5.1 and 5.2 appearing in \cite{And72}. Our proof follows those given there.

Let $(\Omega, \caF,\bbP)$ be a probability space on which is defined a one-dimensional Brownian motion $W$ adapted to a filtration $(\caF_t)_{t \geq 0}$. For $t \geq 0$, let $X(t)$ and $Y(t)$ be two real-valued processes evolving according to
\be \label{eq:XY}
\begin{split}
X(t) & = x_0 + \int_0^t a(s,X(s),Z(s))\, \dd s + C W_t\;,\\
Y(t) & = x_0 + \int_0^t c(s,Y(s))\, \dd s + C W_t\;,
\end{split}
\ee
where $C$ is a constant, $a: [0,\infty) \times \bbR^2 \to \bbR$, $c: [0,\infty) \times \bbR \to \bbR$ are continuous functions, $Z(t)$ is an $\caF_t$-adapted process with continuous sample paths almost surely and $Z(0) = z_0$, which is $\caF_0$-adapted. Note that $X(t)$ and $Y(t)$ have the same initial condition, which may be constant or random as long as it is $\caF_0$-adapted.
\begin{lemma}\label{comp}
Suppose there are constants $C_1,\, C_2 > 0$ such that whenever $|x| < C_1$ and $|z|<C_2$, $a(t,x,z) < c(t,x)$ for all $t \geq 0$. If, almost surely, $|X(t)| < C_1$ and $|Z(t)| < C_2$ for all $t \geq 0$ then 
\[
\bbP\{Y(t) \geq X(t) \text{ for all }t \geq 0\} = 1\;.
\]
\end{lemma}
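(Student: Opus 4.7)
The plan is to exploit the crucial observation that $X$ and $Y$ are driven by the \emph{same} Brownian motion with the \emph{same} constant diffusion coefficient $C$. Consequently, the difference
\[
D(t) \isdefby Y(t) - X(t) = \int_0^t \bigl[c(s,Y(s)) - a(s,X(s),Z(s))\bigr] \, \dd s
\]
has no stochastic integral part: the $C\,W_t$ terms cancel. Since $a$, $c$ are continuous and $X$, $Y$, $Z$ have continuous sample paths almost surely, the integrand is a.s.\ continuous, so $D$ is a.s.\ continuously differentiable with $D(0)=0$ and
\[
D'(t) = c(t,Y(t)) - a(t,X(t),Z(t)).
\]
This reduces the problem to a deterministic, pathwise first-crossing argument on the full-measure event $\Omega_0$ on which the sample paths of $X$, $Y$, $Z$ are continuous and $|X(t)|<C_1$, $|Z(t)|<C_2$ for all $t\geq 0$.

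Fix $\omega \in \Omega_0$ and define the first negative-crossing time $\tau(\omega) = \inf\{t\geq 0 : D(t)<0\}$. I would argue by contradiction that $\tau=+\infty$. Suppose instead $\tau<\infty$; by continuity of $D$ and the fact that $D(s)\geq 0$ for all $s<\tau$, we must have $D(\tau)=0$, i.e.\ $Y(\tau)=X(\tau)$. Then using the hypothesis $a(t,x,z)<c(t,x)$ for $|x|<C_1$, $|z|<C_2$, applied at $t=\tau$, $x=X(\tau)$, $z=Z(\tau)$, yields
\[
D'(\tau) = c(\tau,X(\tau)) - a(\tau,X(\tau),Z(\tau)) > 0.
\]
Since $D$ is $C^1$ with $D(\tau)=0$ and $D'(\tau)>0$, there exists $\delta>0$ with $D(t)>0$ on $(\tau,\tau+\delta)$. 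But by definition of $\tau$ as an infimum, there must be a sequence $t_n \downarrow \tau$ with $D(t_n)<0$, a contradiction. Hence $\tau=+\infty$ on $\Omega_0$, i.e.\ $D(t)\geq 0$ for all $t\geq 0$ almost surely. (Note that $\tau>0$ is guaranteed by the same argument applied at $t=0$: since $|x_0|<C_1$ and $|z_0|<C_2$, we get $D'(0)>0$ and so $D(t)>0$ on a right neighbourhood of $0$.)

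The main conceptual step is recognising that the common Brownian driver eliminates the martingale part of $D$, reducing a stochastic comparison to a classical ODE-type first-crossing argument; after that, the proof is essentially one line. It is worth emphasising that $a$ and $c$ are assumed only continuous (not Lipschitz), so one cannot appeal to a Gr\"onwall estimate — it is precisely the \emph{strict} inequality $a<c$ in the hypothesis, active at any potential crossing point (where $|X(\tau)|<C_1$ and $|Z(\tau)|<C_2$ hold by assumption and $Y(\tau)=X(\tau)$ automatically lies in $(-C_1,C_1)$), that supplies the strict sign of $D'$ needed to prevent any crossing.
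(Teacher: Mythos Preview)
Your proof is correct and follows essentially the same approach as the paper's: both exploit the cancellation of the common Brownian term so that $Y-X$ is continuously differentiable, and then derive a contradiction at the first crossing time by showing the derivative there is strictly positive. The only cosmetic difference is that the paper conditions on the event $\{\tau<\infty\}$ and works under the conditional measure, whereas you argue pathwise on a full-measure set; the logical content is identical.
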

\begin{proof}
Define $\tau = \inf\{t>0: Y(t) - X(t)<0\}$ and set $\tau = +\infty$ if $Y(t) \geq X(t)$ for all $t \geq 0$. This is a stopping time because if $t>0$ then
\[
\{\tau \geq t\} = \bigcap_{r \in [0,t]\cap \bbQ}\{Y(r) - X(r) \geq 0\} \in \caF_t\;.
\]
Put $D = \{\tau < +\infty\}$ and assume that $\bbP(D)>0$. Then we can define a probability measure $\bbQ(\cdot) = \bbP(\cdot|D)$ on $\caF$. Let
\begin{align*}
X^+(t) & = X(t+\tau)\;,\\
Y^+(t) & = Y(t+\tau)\;.
\end{align*}
By continuity, $\bbQ\{X^+(0) = Y^+(0)\} = 1$. For any $t \geq 0$, we can therefore write (almost surely with respect to $\bbQ$)
\begin{align*}
Y^+(t) - X^+(t) & = Y(t+\tau) - X(t+\tau)\\
& = Y(\tau) - X(\tau) + \int_{\tau}^{t+\tau} \! c(s,Y(s)) - a(s,X(s),Z(s))\, \dd s\\
& = \int_0^t \! c(s+\tau,Y^+(s)) - a(s+\tau,X^+(s),Z^+(s))\, \dd s\;.
\end{align*}
The right-hand side is continuously differentiable in $t$ and so
\[
\bbQ\left\{\lim_{t \to 0}\frac{Y^+(t) - X^+(t)}{t} = c(\tau,X^+(0)) - a(\tau,X^+(0),Z^+(0))\right\} = 1 \;.
\]
Therefore, $\bbQ\{Y^+(t) > X^+(t) \text{ for all sufficiently small } t>0 \} = 1$. But due to continuity of $X$ and $Y$ and the definition of $\tau$, this probability should be zero. This contradiction arises from the assumption that $\bbP(D)>0$. Therefore, $\bbP\{\tau = +\infty\} = 1$.

\end{proof}


\newcommand{\etalchar}[1]{$^{#1}$}

\end{document}